\documentclass{article}
\usepackage{hyperref}
\usepackage[american]{babel}
\usepackage{amsfonts,amsmath,amssymb,epsf,epsfig, stmaryrd}

\usepackage{amsthm}

\usepackage{xypic}
\xyoption{all}
\input{xypic}
\newdir{ >}{{}*!/-8pt/\dir{>}}

\newcommand{\id}{{\rm id}}

\renewcommand{\hat}{\widehat} 
\newcommand{\im}{\mathop{{\rm im}}\nolimits}
\newcommand{\Hom}{\mathop{{\rm Hom}}\nolimits}


\def\R{\mathbb{R}}

     \def\Hom{{\rm Hom}}
\def\ad{{\rm ad}}  
\def\pr{$\bf{Proof.}$\quad}
\def\fin{\hfill$\square$\\}

\newtheorem{theo}{Theorem}[section]
\newtheorem{defi}[theo]{Definition}
\newtheorem{rema}[theo]{Remark}
\newtheorem{prop}[theo]{Proposition}
\newtheorem{cor}[theo]{Corollary}
\newtheorem{lem}[theo]{Lemma}
\newtheorem{pro}[theo]{Problem}

\newenvironment{rem}{\begin{rema}\rm}{\end{rema}}

\begin{document}
\title{Deformation quantization of Leibniz algebras} 

\author{Benoit Dherin\\
University of California, Berkeley, USA\\
dherin@math.berkeley.edu \\
\and Friedrich Wagemann\\
     Universit\'e de Nantes, France\\
wagemann@math.univ-nantes.fr}

\maketitle

\begin{abstract}
This paper has two parts. The first part is a review and extension of the 
methods of integration of Leibniz algebras into Lie racks, including as new feature
a new way of integrating $2$-cocycles (see Lemma 3.9). 

In the second part, we use the local integration of a Leibniz algebra ${\mathfrak h}$ 
using a Baker-Campbell-Hausdorff type formula in order to deformation quantize its linear dual
${\mathfrak h}^*$. More precisely, we define a natural rack product on the set of 
exponential functions
which extends to a rack action on ${\mathcal C}^{\infty}({\mathfrak h}^*)$.       
\end{abstract}

\section*{Introduction}

In this paper, we solve an old problem in symplectic geometry, namely we propose a way how to 
quantize the dual space of a Leibniz algebra ${\mathfrak h}$. This dual space ${\mathfrak h}^*$
is some kind of generalized Poisson manifold, as the bracket of ${\mathfrak h}$ is not necessarily
skew-symmetric. Intimately linked to this question is the integration of Leibniz algebras.   

One of the most fascinating theorems in Lie theory is Lie's Third Theorem, namely the possibility
to integrate every real Lie algebra into a Lie group. Several proofs of this theorem are known,
none of them reduces the claim to easy facts or computations. We focus here especially on two 
approaches. The first one, which we call the homological proof of Lie's Third Theorem, regards
a given Lie algebra ${\mathfrak g}$ as a central extension of its adjoint Lie algebra 
${\mathfrak g}_{\rm ad}$ by its center $Z({\mathfrak g})$ and uses then the fact that 
${\mathfrak g}_{\rm ad}$ is embedded by the adjoint action as a subalgebra of 
${\mathfrak g}{\mathfrak l}({\mathfrak g})$, thus integrating by Lie's First Theorem into a 
Lie subgroup
of ${\rm Gl}({\mathfrak g})$. It remains then to integrate the $2$-cocycle determining the 
central extension,
therefore we call it {\it homological proof}. Another approach, which we call the approach using Ado's 
Theorem, uses the fact that every real Lie algebra embeds as a subalgebra of some matrix Lie algebra
(Ado's Theorem) to integrate once again only subalgebras of general linear Lie algebras into 
Lie subgroups of general linear groups. 

In the search of understanding the periodicity in K-theory, J.-L. Loday introduced Leibniz algebras 
as non-commutative analogues of Lie algebras. More precisely, a real Leibniz algebra is a real
vector space with a bracket which satisfies the (left) Leibniz identity 
$$[X,[Y,Z]]\,=\,[[X,Y],Z]+[Y,[X,Z]],$$
but is not necessarily skew-symmetric. Leibniz algebras are a well-established algebraic structure
generalizing Lie algebras (those Leibniz algebras where the bracket is skew-symmetric) with their own 
structure-, deformation- and homology theory. 
In the same way the Lie algebra homology of matrices (over a commutative ring containing the rational 
numbers) defines additive K-theory (i.e. cyclic homology), the Leibniz homology
of matrices defines some non-commutative additive K-theory (in fact, Hochschild homology). 
Loday was mainly interested in the 
properties of the corresponding homology theory on ``group level'' (``Leibniz K-Theory''), 
and therefore asked the question which (generalization of the 
structure of Lie groups) is the correct structure to integrate Leibniz algebras ?

This paper consists of two parts.
The main goal of the first part (comprising Sections 1 to 3) is to compare the integration 
procedures for Leibniz algebras
which one may generalize from the homological proof and from the proof using Ado's Theorem of Lie's
Third Theorem. 
Kinyon \cite{Kin} explored Lie racks as a structure integrating Leibniz algebras. 
Racks are roughly speaking an axiomatization of the structure of the conjugation in a group.
The rack product on a group is simply given by
$$g\rhd h\,:=\,ghg^{-1},$$
and a general rack product on a set $X$ is an invertible binary operation satisfying for all $x,y,z\in X$ 
the autodistributivity relation
$$x\rhd(y\rhd z)\,=\,(x\rhd y)\rhd(x\rhd z).$$
Lie racks are the smooth analogue of racks. Kinyon showed (see Theorem \ref{Kinyon1}) that the tangent 
space at the distiguished 
element $1$ of a Lie rack carries in a natural way a Leibniz bracket. The idea is to differentiate
two times the rack structure, mimicking exactly how the conjugation in a Lie group is differentiated 
to give first the map ${\rm Ad}$, the adjoint action of the group on the Lie algebra, and then the 
Lie bracket in terms of ${\rm ad}$, the adjoint action of the Lie algebra on itself. He did not
see racks as the correct objects integrating Leibniz algebras. As a reason for this, he showed 
that all Leibniz algebras integrate into Lie racks, but in a kind of arbitrary way, as this integration
does not appear to give Lie groups in case one started with a Lie algebra. He more or less exhibited
Ado's approach to the integration of Leibniz algebras into (what he called) linear Lie racks.
We explain this in Section $3$. It is clear (and useful as a guiding principle) that from this point of
view, integrating Leibniz algebras means just an {\it integration of the adjoint action} of a Leibniz 
algebra on itself. From here stems the most important example of a rack product, namely 
$$X\rhd Y\,:=\,e^{{\rm ad}_X}(Y),$$
for all $X,Y\in{\mathfrak h}$ for a Leibniz algebra ${\mathfrak h}$. 

On the other hand, Covez \cite{Cov} showed in his 2010 doctoral thesis how to adapt the 
homological proof of 
Lie's Third Theorem to Leibniz algebras. Regarding a given real Leibniz algebra ${\mathfrak h}$
as an abelian extension of the Lie algebra ${\mathfrak h}_{\rm Lie}$ by its left center 
$Z_L({\mathfrak h})$, he integrated Leibniz algebras into local Lie racks. The fact that this procedure
works only locally stems from the fact that the Leibniz $2$-cocycle governing the abelian extension 
is only integrated into a local rack cocycle, due to the use of open sets on the Lie group $G_0$
integrating ${\mathfrak h}_{\rm Lie}$ where exponential and logarithm are mutually inverse 
diffeomorphisms. This integration has the advantage of specializing to the conjugation racks 
associated to Lie groups in case the given Leibniz algebra is a Lie algebra. All this is 
explained in some detail in Section $2$.  

This first part of this paper clarifies the integration of Leibniz algebras and shows to our 
belief that Lie racks 
are indeed the correct structure for this integration. Along the way, we show that in all of our
integration procedures the integrating object reduces locally to (the conjugation rack of) a Lie 
group in case we are dealing with a Lie algebra. The comparison of these approaches is summarized in 
Section \ref{summary_section}. 

Other evidence that racks are the right objects integrating Leibniz algebras comes from recent 
work of Covez on product structures on rack homology showing that it has (some of) the expected
properties of a Leibniz K-theory which were predicted by Loday.

In the second part of our paper (Section 4), we use the integration procedure of Leibniz 
algebras set up in 
Section $3$ in order to develop deformation quantization of Leibniz algebras.

Given a finite-dimensional real Lie algebra $({\mathfrak g},[,])$, 
its dual vector space ${\mathfrak g}^*$ is a smooth manifold
which carries a Poisson bracket on its space of smooth functions, defined for all 
$f,g\in{\mathcal C}^{\infty}({\mathfrak g}^*)$ and all $\xi\in{\mathfrak g}^*$
by the {\it Kostant-Kirillov-Souriau formula}
$$\{f,g\}(\xi)\,:=\,\langle \xi,[df(\xi),dg(\xi)]\rangle.$$
Here $df(\xi)$ and $dg(\xi)$ are linear functionals on ${\mathfrak g}^*$, identified with elements of
${\mathfrak g}$.

In the same way, a general Leibniz algebra ${\mathfrak h}$ gives rise to a smooth manifold 
${\mathfrak h}^*$, which carries now some kind of generalized Poisson bracket, in
particular, the bracket need
not be skew-symmetric. We call manifolds with such a bracket {\it generalized Poisson manifolds}. 

It is well known that the deformation quantization of the Poisson manifold ${\mathfrak g}^*$ for a Lie 
algebra ${\mathfrak g}$ is intimitely related to the integration of the bracket of
${\mathfrak g}$ into a local/formal group product via the Baker-Campbell-Hausdorff (BCH) formula. 
The main idea of the present paper is to use the corresponding BCH-formula for the integration
of a Leibniz algebra ${\mathfrak h}$ in order to perform the corresponding deformation quantization.
  
The quantization technique we use relies on the quantization of special canonical relation germs,
called {\it symplectic micromorphisms} (see \cite{CDWI}, \cite{CDWII}, \cite{CDWIII}, and \cite{CDWIV}), 
by Fourier integral operators. 
In the Lie algebra case, we show that it is possible to re-interprete the Gutt star-product in terms 
of a symplectic micromorphism quantization, obtained by considering the cotangent lift of the local
group structure on the Lie algebra. We show that this quantization method also works for Leibniz algebras,
provided one takes the cotangent lift of the local rack structure for the symplectic micromorphism.
This local rack structure comes from the integration procedure exposed in the first part.   
 

The quantization of the dual of a Leibniz algebra ${\mathfrak h}^*$ that results from quantizing the 
symplectic micromorphism obtained from the local rack structure, is an operation 
$$\rhd:{\mathcal C}^{\infty}({\mathfrak h}^*)[[\epsilon]]\times
{\mathcal C}^{\infty}({\mathfrak h}^*)[[\epsilon]]\to
 {\mathcal C}^{\infty}({\mathfrak h}^*)[[\epsilon]]$$
such that the restriction of $\rhd$ to ``unitaries'' $U_{\mathfrak h}:=\{E_X\,|\,X\in{\mathfrak h}\}$ ($E_X$ being the 
exponential function on ${\mathfrak h}^*$ associated to $X\in{\mathfrak h}$) is a rack
structure $\rhd:U_{\mathfrak h}\times U_{\mathfrak h}\to U_{\mathfrak h}$. 

Moreover, the restriction of this operation to 
$$\rhd:U_{\mathfrak h}\times{\mathcal C}^{\infty}({\mathfrak h}^*)[[\epsilon]]\to
 {\mathcal C}^{\infty}({\mathfrak h}^*)[[\epsilon]]$$
should be a rack action. 

Our main theorem shows exactly this:\\

\noindent{\bf Theorem 4.12}{\it 
\quad The operation 
$$\rhd_{\hbar}:{\mathcal C}^{\infty}({\mathfrak h}^*)[[\epsilon]]\otimes
{\mathcal C}^{\infty}({\mathfrak h}^*)[[\epsilon]]\to
{\mathcal C}^{\infty}({\mathfrak h}^*)[[\epsilon]]$$
defined by 
$$f\rhd_{\hbar}g\,:=\,Q^{a=1}(T^*\rhd)(f\otimes g)$$
is a quantum rack, i.e.
\begin{enumerate}
\item $\rhd_{\hbar}$ restricted to $U_{\mathfrak h}=\{E_X\,|\,X\in{\mathfrak h}\}$ is a rack structure,
moreover
$$e^{\frac{i}{\hbar}X}\rhd_{\hbar}e^{\frac{i}{\hbar}Y}\,=\,e^{\frac{i}{\hbar}e^{{\rm ad}_X}(Y)},$$
\item $\rhd_{\hbar}$ restricted to 
$$\rhd_{\hbar}:U_{\mathfrak h}\times{\mathcal C}^{\infty}({\mathfrak h}^*)\to
{\mathcal C}^{\infty}({\mathfrak h}^*)$$ 
is a rack action;
$$(e^{\frac{i}{\hbar}X}\rhd_{\hbar}f)(\xi)\,=\,({\rm Ad}_{-X}^*f)(\xi).$$
\end{enumerate}
Moreover, $\rhd_{\hbar}$ coincides with the Gutt quantum rack $f\rhd_a g:=f*_ag*_a\overline{f}$
on unitaires in the Lie case (although it is different on the whole
${\mathcal C}^{\infty}({\mathfrak h}^*)[[\epsilon]]$).}\\

The quantization operator $Q^{a=1}$ occuring here is constructed as the Fourier Integral Operator 
associated to an amplitude $a$ and a generating function $S_{\rhd}$ according to 
$$Q^a(f\otimes g)(\xi)\,=\,\int_{{\mathfrak h}\times{\mathfrak h}}\widehat{f}(X)\widehat{g}(Y)
a(X,Y,\xi)e^{\frac{i}{\hbar}S_{\rhd}(X,Y,\xi)}\frac{dXdY}{(2\pi\hbar)^{n}},$$
where $f,g\in{\mathcal C}^{\infty}({\mathfrak h}^*)[[\epsilon]]$, $X,Y\in{\mathfrak h}$, 
$\xi\in{\mathfrak h}^*$
and $n=\dim({\mathfrak h})$. 
The generating function $S_{\rhd}$ relies on a Baker-Campbell-Hausdorff type formula for 
the Leibniz case, namely
$$S_{\rhd}(X,Y,\xi)\,:=\,\langle\xi, e^{{\rm ad}_X}(Y)\rangle.$$

The problem of deformation quantizing a Leibniz algebra has been addressed by other authors,
namely by K. Uchino in \cite{Uch} in the realm of associative dialgebras. 

Observe that thanks to this theorem, the general integration problem for generalized 
Poisson manifolds makes sense. 
Namely, given a generalized Poisson manifold, i.e. a manifold $M$ together with a 
bracket on ${\mathcal C}^{\infty}(M)$ satisfying similar properties
as the bracket on ${\mathcal C}^{\infty}({\mathfrak h}^*)$. Then 
we may ask whether there exists
a natural rack structure on the set of exponential functions which extends 
to a rack action on all smooth functions. 
Our main theorem solves this integration problem for {\it linear} generalized 
Poisson structures.  
 
A side result is a new way of integrating $2$-cocycles (see Lemma 3.9), which 
we find by comparing Covez' integration
and the BCH integration procedure.     

\vspace{1cm}

\noindent{\bf Acknowledgements:} FW is grateful to UC Berkeley for hospitality and 
excellent working conditions
during our work on this article. He thanks especially Alan Weinstein for the invitation, 
guidance and advice, and 
most useful discussions about the integration of Leibniz algebras. FW acknowledges 
support from CNRS during this period. FW thanks Yannick Voglaire for correcting the 
coadjoint action, and K. Uchino for correcting the notion of a generalized Poisson manifold
and bringing \cite{GraMar} to our attention.    

\section{Preliminaries on Leibniz and Lie algebras}

\subsection{Derivations of Leibniz algebras}

Fix a field $k$. Later we will specialize to $k=\R$ in order to speak about the exponential map
(although this is not mandatory). 
We present here a recollection of facts from Lie algebra theory which we generalize to Leibniz algebras
by showing that the usual proof still holds true in the Leibniz context. 

\begin{defi}
A (left) Leibniz algebra is a $k$-vector space ${\mathfrak h}$ together with a $k$-bilinear bracket
$[,]:{\mathfrak h}\times{\mathfrak h}\to{\mathfrak h}$ such that for all 
$X,Y,Z\in{\mathfrak h}$
$$[X,[Y,Z]]\,=\,[[X,Y],Z]+[Y,[X,Z]].$$
\end{defi}

\begin{defi}
A (left) derivation of a Leibniz algebra ${\mathfrak h}$ is a $k$-linear map 
$D:{\mathfrak h}\to{\mathfrak h}$
such that for all $X,Y\in{\mathfrak h}$
$$D([X,Y])\,=\,[D(X),Y]+[X,D(Y)].$$
\end{defi} 

Observe that the above left Leibniz identity means that for all $X\in{\mathfrak h}$,
${\rm ad}_X:=[X,-]$ is a (left) derivation of the bracket. 
Obviously, in case the bracket is 
also skew-symmetric, ${\mathfrak h}$ becomes a Lie algebra and the left Leibniz 
identity becomes the usual Jacobi identity. As this need not be 
the case, the notion of Leibniz algebra generalizes the notion of Lie algebra.
Observe furthermore that skew-symmetrizing the bracket of a Leibniz algebra does
not necessarily give a Lie algebra, as the Jacobi identity is not necessarily 
satisfied.               

\begin{lem}   \label{derivations_Leibniz}
For any Leibniz algebra ${\mathfrak h}$, the space of derivations ${\rm der}({\mathfrak h})$ 
together with the bracket of derivations
$$[D,D\,']\,:=\,D\circ D\,'-D\,'\circ D,$$
forms a Lie algebra. 
\end{lem}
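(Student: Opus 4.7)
The plan is to prove this in two stages: first show that $\mathrm{der}(\mathfrak{h})$ is closed under the commutator bracket, and then observe that the Lie algebra axioms for the commutator hold for purely formal reasons on any space of linear endomorphisms.

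For closure, I would take two derivations $D, D' \in \mathrm{der}(\mathfrak{h})$ and directly compute $[D,D']([X,Y])$ for arbitrary $X,Y \in \mathfrak{h}$. Applying $D \circ D'$ to $[X,Y]$ and using the derivation property for $D'$ first, then for $D$, one obtains four terms:
\begin{align*}
(D \circ D')([X,Y]) &= D\bigl([D'(X),Y] + [X,D'(Y)]\bigr) \\
&= [D(D'(X)),Y] + [D'(X),D(Y)] + [D(X),D'(Y)] + [X,D(D'(Y))].
\end{align*}
The analogous computation for $D' \circ D$ yields the same two ``cross'' terms $[D'(X),D(Y)]$ and $[D(X),D'(Y)]$, which therefore cancel upon subtracting. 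What remains is
$$[D,D']([X,Y]) = [[D,D'](X),Y] + [X,[D,D'](Y)],$$
exactly the (left) derivation identity. Crucially, this cancellation does not rely on the bracket being skew-symmetric; it works verbatim in the Leibniz setting, which is the whole point of the lemma.

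Having closure, bilinearity of $[\cdot,\cdot]$ on $\mathrm{der}(\mathfrak{h})$ is immediate from bilinearity of composition, skew-symmetry is obvious from the definition $[D,D'] = D \circ D' - D' \circ D$, and the Jacobi identity is the standard identity for the commutator of associative operations, valid on endomorphisms of any vector space. None of these require the Leibniz identity.

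I do not expect any serious obstacle here. The only conceptual subtlety is to use the \emph{left} derivation rule in the correct order on both sides so that the middle terms cancel; once that is done, the computation is mechanical and identical in form to the classical Lie algebra case.
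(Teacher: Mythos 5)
Your proposal is correct and follows essentially the same route as the paper: the same four-term expansion of $(D\circ D')([X,Y])$ and $(D'\circ D)([X,Y])$ with cancellation of the two cross terms, together with the observation that the Jacobi identity holds because composition of endomorphisms is associative. Nothing is missing.
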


\pr The bracket satisfies the Jacobi identity because of the associativity of the 
composition of endomorphisms of ${\mathfrak h}$. The bracket of derivations is once again 
a derivation by the following computation for all $X,Y\in{\mathfrak h}$:
\begin{eqnarray*}
[D,D\,']([X,Y])&=&(D\circ D\,')([X,Y])-(D\,'\circ D)([X,Y]) \\
&=& [(D\circ D\,')(X),Y]+[D(X),D\,'(Y)]+[D\,'(X),D(Y)]+ \\
&+& [X,(D\circ D\,')(Y)]-[(D\,'\circ D)(X),Y]-[D(X),D\,'(Y)]\\
&-&[D\,'(X),D(Y)]- [X,(D\,'\circ D)(Y)]\\
&=& [(D\circ D\,'-D\,'\circ D)(X),Y]+[X,(D\circ D\,'-D\,'\circ D)(Y)] \\
&=& [[D,D\,'](X),Y]+ [X,[D,D\,'](Y)]\,\,\,\,\,\,\,\,\,\,\,\Box
\end{eqnarray*}

By the Leibniz identity, for all $X\in{\mathfrak h}$, the endomorphism ${\rm ad}_X$ is a derivation,
called the {\it inner derivation} associated to $X$. 

\begin{lem}
The subspace ${\rm inn}({\mathfrak h})$ of inner derivations of a Leibniz algebra ${\mathfrak h}$
forms an ideal in the Lie algebra ${\rm der}({\mathfrak h})$ of all derivations.
\end{lem}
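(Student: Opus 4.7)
The plan is to verify the two things required of an ideal: that $\mathrm{inn}(\mathfrak{h})$ is a linear subspace, and that it is stable under bracketing with any element of $\mathrm{der}(\mathfrak{h})$.

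For the subspace part, I would simply observe that the map $X\mapsto \mathrm{ad}_X$ is $k$-linear, which is immediate from the bilinearity of the bracket: $\mathrm{ad}_{\lambda X+\mu Y}=\lambda\,\mathrm{ad}_X+\mu\,\mathrm{ad}_Y$. Hence $\mathrm{inn}(\mathfrak{h})$ is the image of a linear map and is therefore a subspace of $\mathrm{der}(\mathfrak{h})$.

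The heart of the proof is the standard identity $[D,\mathrm{ad}_X]=\mathrm{ad}_{D(X)}$, which I would establish by evaluating both sides on an arbitrary $Y\in\mathfrak{h}$. Unfolding the definitions, $[D,\mathrm{ad}_X](Y)=D([X,Y])-[X,D(Y)]$, and since $D$ is a derivation the first term equals $[D(X),Y]+[X,D(Y)]$. The two copies of $[X,D(Y)]$ cancel, leaving $[D(X),Y]=\mathrm{ad}_{D(X)}(Y)$. Since $Y$ was arbitrary, $[D,\mathrm{ad}_X]=\mathrm{ad}_{D(X)}\in\mathrm{inn}(\mathfrak{h})$, which shows inner derivations form a (two-sided, in the Lie sense) ideal.

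I expect no real obstacle here: the only subtle point is that the argument uses \emph{left} derivations and a \emph{left} Leibniz bracket, so I would be careful to invoke the defining relation $D([X,Y])=[D(X),Y]+[X,D(Y)]$ in the form given in the paper rather than any right-handed variant. Note also that the computation never appealed to skew-symmetry, so the proof goes through identically in the Leibniz setting as in the Lie case, and as a byproduct gives the explicit formula $[D,\mathrm{ad}_X]=\mathrm{ad}_{D(X)}$, which will presumably be useful later.
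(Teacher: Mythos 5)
Your proof is correct and follows the same route as the paper: both establish the key identity $[D,\mathrm{ad}_X]=\mathrm{ad}_{D(X)}$ by evaluating on an arbitrary $Y$, expanding $D([X,Y])$ via the (left) derivation property, and cancelling the $[X,D(Y)]$ terms. The added remark that $\mathrm{inn}(\mathfrak{h})$ is a subspace because it is the image of the linear map $\mathrm{ad}$ is a detail the paper notes separately, and does no harm here.
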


\pr We have for all $X,Y\in{\mathfrak h}$:
\begin{eqnarray*}
[D,{\rm ad}_X](Y)&=& D([X,Y])-[X,D(Y)] \\
&=& [D(X),Y]+[X,D(Y)]-[X,D(Y)] \\
&=& [D(X),Y] \,=\, {\rm ad}_{D(X)}(Y).\,\,\,\,\,\,\,\,\,\,\,\,\,\,\,\,\Box
\end{eqnarray*}

Observe that the subspace ${\rm inn}({\mathfrak h})$ is also the image of the map
${\rm ad}:{\mathfrak h}\to{\rm der}({\mathfrak h})$.   
  
\begin{defi}
For any Leibniz algebra ${\mathfrak h}$, the quotient Lie algebra of ${\rm der}({\mathfrak h})$
by the ideal of inner derivations ${\rm inn}({\mathfrak h})$ is called the Lie algebra 
${\rm out}({\mathfrak h})$ of outer derivations of ${\mathfrak h}$.
\end{defi}

\begin{defi}
The left center of a Leibniz algebra ${\mathfrak h}$ is the subspace
$$Z_L({\mathfrak h})\,:=\,\{\,X\in {\mathfrak h}\,|\,[X,Y]=0\,\,\,\forall\,\,Y\in{\mathfrak h}\,\}.$$
\end{defi} 

\begin{lem}
The left center $Z_L({\mathfrak h})$ of a Leibniz algebra ${\mathfrak h}$ is an abelian (left) ideal.
\end{lem}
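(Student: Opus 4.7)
The plan is to verify two things beyond the (immediate) fact that $Z_L(\mathfrak{h})$ is a linear subspace of $\mathfrak{h}$: first, that bracketing an arbitrary element of $\mathfrak{h}$ on the left with an element of $Z_L(\mathfrak{h})$ again lands in $Z_L(\mathfrak{h})$; and second, that the induced bracket on $Z_L(\mathfrak{h})$ is identically zero. Both fall out directly from the left Leibniz identity combined with the defining property of the left center.

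For the left ideal property I would fix $X \in Z_L(\mathfrak{h})$, $W \in \mathfrak{h}$, and an arbitrary $Y \in \mathfrak{h}$, and apply the left Leibniz identity to the triple $(W, X, Y)$:
$$[W,[X,Y]] \,=\, [[W,X],Y] + [X,[W,Y]].$$
The left-hand side vanishes because $[X,Y] = 0$, and the last term on the right vanishes because $X$ lies in the left center and $[W,Y] \in \mathfrak{h}$. What remains is $[[W,X],Y] = 0$ for every $Y$, so $[W,X] \in Z_L(\mathfrak{h})$. Equivalently, one can phrase this as the observation that the derivation ${\rm ad}_W$ sends the annihilator $Z_L(\mathfrak{h})$ into itself.

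Abelianness is even more immediate: for $X, X' \in Z_L(\mathfrak{h})$, $[X,X'] = 0$ simply by definition of the left center, since $X' \in \mathfrak{h}$. As an aside, the same observation gives $[X,W] = 0$ for every $W \in \mathfrak{h}$, so $Z_L(\mathfrak{h})$ is automatically a two-sided ideal, although only the left ideal property is claimed.

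There is no real obstacle here; the only subtlety worth flagging is that the argument relies on placing the central element $X$ in the middle slot of the triple when applying the Leibniz identity, so that both the outer bracket on the left-hand side and one of the two terms on the right-hand side collapse. Placing $X$ in either of the other two slots would either fail to exploit $X \in Z_L(\mathfrak{h})$ or give only a tautology.
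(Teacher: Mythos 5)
Your proof is correct and follows essentially the same route as the paper: both place the left-central element in the middle slot of the left Leibniz identity so that two of the three terms vanish, leaving $[[W,X],Y]=0$ for all $Y$. The only difference is cosmetic (variable naming and your explicit remark that abelianness is definitional, which the paper leaves implicit).
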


\pr By the Leibniz identity, we have for all $X,Z\in{\mathfrak h}$ and $Y\in Z_L({\mathfrak h})$
$[[X,Y],Z]\,=\,[X,[Y,Z]]-[Y,[X,Z]]\,=\,0$. \fin

We summarize the preceeding discussion in the following

\begin{prop}
For any Leibniz algebra ${\mathfrak h}$, there is a 4-term exact 
sequence of Leibniz algebras:
$$0\to Z_L({\mathfrak h})\to {\mathfrak h}\stackrel{{\ad}}{\to} {\rm der}({\mathfrak h}) \to
{\rm out}({\mathfrak h})\to 0.$$
The only Leibniz algebra in this sequence which is not necessarily a Lie algebra is ${\mathfrak h}$.
\end{prop}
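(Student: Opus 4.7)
The plan is to assemble the four-term sequence from the lemmas already proved and then verify exactness at each of the four places, together with the fact that each map is a morphism of Leibniz algebras (in fact of Lie algebras except possibly for $\mathrm{ad}$, whose domain is $\mathfrak{h}$).

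First I would identify the maps. The map $Z_L(\mathfrak{h}) \hookrightarrow \mathfrak{h}$ is the inclusion (injective by definition). The map $\mathrm{ad}\colon \mathfrak{h}\to\mathrm{der}(\mathfrak{h})$ is well-defined because, as noted right after the definition of a derivation, the left Leibniz identity says exactly that $\mathrm{ad}_X\in\mathrm{der}(\mathfrak{h})$ for every $X$. The map $\mathrm{der}(\mathfrak{h})\to\mathrm{out}(\mathfrak{h})$ is the canonical quotient by the ideal $\mathrm{inn}(\mathfrak{h})$, which is an ideal by the second lemma above, and it is surjective by construction.

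Next I would check exactness at each spot. Exactness at $Z_L(\mathfrak{h})$ is the injectivity of the inclusion. Exactness at $\mathfrak{h}$ amounts to the identity $\ker(\mathrm{ad}) = Z_L(\mathfrak{h})$, which is immediate: $\mathrm{ad}_X = 0$ iff $[X,Y]=0$ for all $Y$, i.e.\ iff $X\in Z_L(\mathfrak{h})$. Exactness at $\mathrm{der}(\mathfrak{h})$ is the remark already made in the text that $\mathrm{inn}(\mathfrak{h}) = \mathrm{im}(\mathrm{ad})$, combined with the fact that $\mathrm{inn}(\mathfrak{h})$ is the kernel of the projection onto $\mathrm{out}(\mathfrak{h})$. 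Exactness at $\mathrm{out}(\mathfrak{h})$ is the surjectivity of that projection.

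Then I would verify that $\mathrm{ad}$ respects brackets. The Leibniz identity rewritten as
$$[[X,Y],Z]\,=\,[X,[Y,Z]]-[Y,[X,Z]]$$
says precisely that $\mathrm{ad}_{[X,Y]} = \mathrm{ad}_X\circ\mathrm{ad}_Y - \mathrm{ad}_Y\circ\mathrm{ad}_X = [\mathrm{ad}_X,\mathrm{ad}_Y]$ in $\mathrm{der}(\mathfrak{h})$, so $\mathrm{ad}$ is a morphism of Leibniz algebras (and lands in something skew, so its image is Lie). The inclusion $Z_L(\mathfrak{h})\hookrightarrow\mathfrak{h}$ is trivially a morphism, and the quotient $\mathrm{der}(\mathfrak{h})\to\mathrm{out}(\mathfrak{h})$ is a morphism of Lie algebras by the usual quotient-of-Lie-algebra-by-an-ideal argument.

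Finally I would address the last sentence of the statement: $Z_L(\mathfrak{h})$ is abelian (and so in particular a Lie algebra) by the lemma on the left center, $\mathrm{der}(\mathfrak{h})$ is a Lie algebra by Lemma~\ref{derivations_Leibniz}, and $\mathrm{out}(\mathfrak{h})$ is a Lie algebra as the quotient of a Lie algebra by a Lie ideal; only $\mathfrak{h}$ itself is a priori merely a Leibniz algebra. There is no real obstacle here — everything has been prepared by the preceding lemmas, and the only slightly non-trivial point is recognizing that the Leibniz identity is literally the statement that $\mathrm{ad}$ is both derivation-valued and a bracket-preserving map, so the sequence writes itself out.
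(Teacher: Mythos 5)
Your proposal is correct and matches the paper's intent exactly: the paper offers no separate proof, introducing the proposition with ``We summarize the preceeding discussion,'' so the argument is precisely the assembly of the earlier lemmas (derivations form a Lie algebra, inner derivations form an ideal equal to $\im(\ad)$, the left center is $\ker(\ad)$) that you carry out. The only point you make explicit that the paper leaves tacit --- that the Leibniz identity is literally the statement $\ad_{[X,Y]}=[\ad_X,\ad_Y]$, so $\ad$ is a morphism --- is a welcome clarification, not a deviation.
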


We can shorten the 4-term sequence to the following short exact sequence:
\begin{equation}   \label{the_abelian_extension}
0\to Z_L({\mathfrak h})\to {\mathfrak h}\to {\rm ad}({\mathfrak h})\to 0.
\end{equation}
In this way, we can associate to each Leibniz algebra ${\mathfrak h}$ an abelian extension such that
the quotient algebra (here ${\rm ad}({\mathfrak h})$) is a Lie algebra. There are of course other
choices which satisfy this requirement, but we will always use this one. 

Now let us come to automorphisms and the exponential map. 

\begin{defi}
A linear map $\alpha:{\mathfrak h}\to{\mathfrak h}$ on a Leibniz algebra ${\mathfrak h}$
is called an endomorphism 
in case for all $X,Y\in{\mathfrak h}$:
$$\alpha([X,Y])\,=\,[\alpha(X),\alpha(Y)].$$ 
Such a map $\alpha$ is called 
an automorphism if in addition it is bijective. 
\end{defi}

We will specialize from now on to $k=\R$ (although this is not completely necessary).

\begin{lem}   \label{exponential_of_derivation}
Let ${\mathfrak h}$ be a Leibniz algebra. Suppose that for a derivation $D\in{\rm der}({\mathfrak h})$
the formula 
$$\exp(D)\,:=\,\sum_{k=0}^{\infty}\frac{1}{k!}D^k$$
defines an endomorphism of ${\mathfrak h}$. Then $\exp(D)$ is an automorphism of ${\mathfrak h}$.
\end{lem}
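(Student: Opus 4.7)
The plan is to exhibit $\exp(-D)$ as an explicit two-sided inverse of $\exp(D)$, reducing the automorphism claim to a standard manipulation with the Cauchy product of formal series. Since $-D$ is again a derivation (derivations form a vector space, by Lemma \ref{derivations_Leibniz}), the same convergence hypothesis that makes $\exp(D)$ well-defined as an endomorphism of $\mathfrak{h}$ applies verbatim to $\exp(-D)$; concretely, we are working in a setting (finite-dimensional, or with $D$ bounded in some norm) where the exponential series converges absolutely in operator norm, and this setting is stable under $D \mapsto -D$. In particular $\exp(-D)$ is itself an endomorphism of $\mathfrak{h}$ by exactly the argument that makes $\exp(D)$ one.

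The core step is then the formal identity
\[
\exp(D)\circ\exp(-D)
\;=\;\sum_{n,m\ge 0}\frac{(-1)^m}{n!\,m!}D^{n+m}
\;=\;\sum_{k\ge 0}\frac{D^k}{k!}\sum_{j=0}^{k}\binom{k}{j}(-1)^j
\;=\;\id,
\]
where the last equality uses that $\sum_{j=0}^{k}\binom{k}{j}(-1)^j = (1-1)^k = 0$ for $k\ge 1$ and equals $1$ for $k=0$. The same computation, carried out in the opposite order, yields $\exp(-D)\circ\exp(D)=\id$. Hence $\exp(D)$ is bijective with inverse $\exp(-D)$, and combined with the hypothesis that it is already an endomorphism, this gives that $\exp(D)$ is an automorphism.

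The only delicate point is the rearrangement of the double sum into a Cauchy product, which is where the convergence hypothesis is actually used. I expect this to be the main (and essentially the only) obstacle: in a purely algebraic setting the identity holds formally, but to interpret the equalities as genuine identities of linear maps on $\mathfrak{h}$ one needs absolute convergence, which is guaranteed as soon as $\sum_k \|D\|^k/k!$ converges in whatever operator norm one has chosen on $\mathrm{End}(\mathfrak{h})$. Notice that no further use of the Leibniz identity or of the derivation property of $D$ is needed for the bijectivity step itself — that property has already been consumed in ensuring that $\exp(D)$ and $\exp(-D)$ are endomorphisms, and bijectivity is a clean piece of linear analysis on top of that.
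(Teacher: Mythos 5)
Your argument establishes only the bijectivity of $\exp(D)$, and on that point it is fine: $\exp(-D)$ is a two-sided inverse by the Cauchy-product computation, and the rearrangement is justified by absolute convergence in the settings the paper actually works in (finite-dimensional $\mathfrak{h}$, or $D$ locally nilpotent), which are indeed stable under $D\mapsto -D$. The paper's own proof in fact never spells out this step, so your inverse computation is a genuine complement to it.

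The gap is that you have read the hypothesis ``defines an endomorphism of $\mathfrak{h}$'' as already granting that $\exp(D)$ preserves the bracket, so that only invertibility remains to be checked. The paper's proof shows that the opposite reading is intended: the hypothesis is only meant to guarantee that the series converges to a well-defined linear operator, and the entire content of the lemma is the multiplicativity
$$(\exp D)([X,Y])\,=\,[(\exp D)(X),(\exp D)(Y)],$$
which is where the derivation property of $D$ is used in an essential, non-formal way. Concretely, one proves by induction from $D([X,Y])=[D(X),Y]+[X,D(Y)]$ the ``binomial'' identity
$$\frac{D^n}{n!}([X,Y])\,=\,\sum_{j=0}^{n}\left[\frac{D^j(X)}{j!},\frac{D^{n-j}(Y)}{(n-j)!}\right],$$
and then sums over $n$, recognizing the right-hand side as the Cauchy product of the two series $(\exp D)(X)$ and $(\exp D)(Y)$. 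Your closing remark that ``no further use of the Leibniz identity or of the derivation property of $D$ is needed'' pinpoints exactly what is missing: that property is not consumed in making $\exp(D)$ a well-defined operator (mere convergence needs nothing of the sort), but in making it a morphism of the Leibniz bracket, and that step is absent from your argument. A complete proof needs both your bijectivity computation and this multiplicativity argument.
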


\pr The formula $D([X,Y])\,=\,[D(X),Y]+[X,D(Y)]$ for all $X,Y\in{\mathfrak h}$ leads by induction to 
$$\frac{D^n}{n!}([X,Y])\,=\,\sum_{j=0}^{\infty}\left[\frac{D^j(X)}{j!},
\frac{D^{n-j}(Y)}{(n-j)!}\right].$$
From here, we obtain
\begin{eqnarray*}
[(\exp D)(X),(\exp D)(Y)]&=&\left[\sum_{p=0}^{\infty}\frac{D^p(X)}{p!},
\sum_{q=0}^{\infty}\frac{D^{q}(Y)}{q!}\right] \\
&=& \sum_{n=0}^{\infty}\sum_{j=0}^{n}\left[\frac{D^j(X)}{j!},
\frac{D^{n-j}(Y)}{(n-j)!}\right]  \\
&=& \sum_{n=0}^{\infty}\frac{D^n}{n!}([X,Y]) \\
&=& (\exp D)([X,Y]).\,\,\,\,\,\,\,\,\,\,\,\,\,\,\,\Box
\end{eqnarray*}

The condition of the preceeding lemma is fulfilled for example in case the derivation is locally
nilpotent, or in case the Leibniz algebra ${\mathfrak h}$ is finite-dimensional, as in this case 
the exponential series of an endomorphism converges (in fact, the exponential series is a polynomial
in this case !). Usually, we will consider finite-dimensional
Leibniz algebras and thus be in the second case. 

\begin{lem}  \label{conjugation_Lemma}
Let ${\mathfrak h}$ be a finite-dimensional Leibniz algebra, $\alpha\in{\rm Aut}({\mathfrak h})$ 
be an automorphism, and $X\in{\mathfrak h}$. Then the following formula holds:
$$\alpha\circ\exp({\rm ad}_X)\circ\alpha^{-1}\,=\,\exp({\rm ad}_{\alpha(X)}).$$
\end{lem}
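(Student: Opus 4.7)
The plan is to reduce the statement to the simpler conjugation identity $\alpha \circ \mathrm{ad}_X \circ \alpha^{-1} = \mathrm{ad}_{\alpha(X)}$, and then push it through the exponential series term by term. No real analytic input is needed beyond the fact that in finite dimension the series for $\exp$ of any endomorphism converges (so Lemma \ref{exponential_of_derivation} applies), and that linear conjugation is continuous / commutes with finite sums.

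First I would verify the conjugation identity for $\mathrm{ad}_X$ itself. For any $Y \in \mathfrak{h}$, using the fact that $\alpha$ is an automorphism one has
$$\alpha \circ \mathrm{ad}_X \circ \alpha^{-1}(Y) \;=\; \alpha\bigl([X,\alpha^{-1}(Y)]\bigr) \;=\; [\alpha(X), Y] \;=\; \mathrm{ad}_{\alpha(X)}(Y).$$
This is the only place where the Leibniz (bracket-preserving) property of $\alpha$ is used. Since $\alpha$ is bijective, this is an identity of linear endomorphisms of $\mathfrak{h}$.

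Next I would iterate. A standard telescoping (or an easy induction on $n$) using the identity above gives
$$\alpha \circ \mathrm{ad}_X^{\,n} \circ \alpha^{-1} \;=\; \bigl(\alpha \circ \mathrm{ad}_X \circ \alpha^{-1}\bigr)^n \;=\; \mathrm{ad}_{\alpha(X)}^{\,n}$$
for all $n \geq 0$. Summing against $1/n!$ and using that left- and right-multiplication by the linear maps $\alpha$ and $\alpha^{-1}$ commute with the (finite-dimensional, hence convergent) sum, I get
$$\alpha \circ \exp(\mathrm{ad}_X) \circ \alpha^{-1} \;=\; \sum_{n=0}^{\infty}\frac{1}{n!}\, \alpha \circ \mathrm{ad}_X^{\,n} \circ \alpha^{-1} \;=\; \sum_{n=0}^{\infty}\frac{1}{n!}\, \mathrm{ad}_{\alpha(X)}^{\,n} \;=\; \exp(\mathrm{ad}_{\alpha(X)}),$$
which is the desired identity.

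There is no real obstacle here; the only subtle point is making sure that $\alpha(\cdot)\alpha^{-1}$ really does commute with the infinite sum defining $\exp$, and this is trivial in finite dimension where the sum is in fact a polynomial in $\mathrm{ad}_X$ (as noted after Lemma \ref{exponential_of_derivation}). Alternatively one could invoke continuity of composition in $\mathrm{End}(\mathfrak{h})$. In either form the argument is short and mirrors the classical Lie-algebra proof verbatim, which is natural since only the derivation property of $\mathrm{ad}_X$ (which, crucially, still holds by the Leibniz identity) is needed.
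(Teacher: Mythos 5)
Your proposal is correct and follows essentially the same route as the paper: establish the infinitesimal identity $\alpha\circ{\rm ad}_X\circ\alpha^{-1}={\rm ad}_{\alpha(X)}$ using that $\alpha$ preserves the bracket, telescope to get the $n$-th power, and pass to the limit in the exponential series using finite-dimensionality. The paper phrases the last step via continuity of composition with $\alpha$ applied to partial sums, but this is the same argument in substance.
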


\pr Let us first prove the infinitesimal formula:
$$\alpha\circ{\rm ad}_X\circ\alpha^{-1}\,=\,{\rm ad}_{\alpha(X)}.$$
This follows directly from the fact that $\alpha$ is an automorphism by applying the formula
to an element $Y$ of ${\mathfrak h}$:
$$\alpha\circ{\rm ad}_X\circ\alpha^{-1}(Y)\,=\,\alpha([X,\alpha^{-1}(Y)]\,=\,[\alpha(X),Y]\,=\,
{\rm ad}_{\alpha(X)}(Y).$$
Now notice that composition with $\alpha$ is continuous, i.e. $\alpha\circ(\lim_{N\to\infty}\phi_N)= 
\lim_{N\to\infty}(\alpha\circ\phi_N)$, by finite-dimensionality of ${\mathfrak h}$. We have
\begin{eqnarray*}
\alpha\circ\left(\sum_{k=0}^N\frac{({\rm ad}_X)^k}{k!}\right)\circ\alpha^{-1}&=&\sum_{k=0}^N
\frac{\alpha\circ({\rm ad}_X)^k\circ\alpha^{-1}}{k!} \\
&=&\sum_{k=0}^N\frac{(\alpha\circ{\rm ad}_X\circ\alpha^{-1})^k}{k!} \\ 
&=&\sum_{k=0}^N\frac{{\rm ad}_{\alpha(X)}}{k!} 
\end{eqnarray*}
The identity follows now from passage to the limit $\lim_{N\to\infty}$ using the continuity of 
the composition with $\alpha$.\fin 

Recall the usual naturality properties of the exponential map of a Lie algebra ${\mathfrak g}$,
see for example \cite{Var} p. 104, formula (2.13.7) and Theorem 2.13.2:

\begin{prop}    \label{naturality_properties}
The exponential map $\exp:{\mathfrak g}\to G$ of a (finite-dimensional) Lie algebra ${\mathfrak g}$ 
into a Lie group $G$ with Lie algebra ${\mathfrak g}$ has the 
following naturality properties for all $X,Y\in{\mathfrak g}$ and all $g\in G$:
\begin{enumerate}
\item[(a)] ${\rm conj}_{g}(\exp(Y))\,=\,\exp({\rm Ad}_g(Y))$,
\item[(b)] $\exp({\rm ad}_X)(Y)\,=\,{\rm Ad}_{\exp(X)}(Y)$. 
\end{enumerate}
\end{prop}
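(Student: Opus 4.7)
The plan is to treat (a) and (b) as two applications of the fact that one-parameter subgroups of a Lie group are uniquely determined by their tangent vector at the identity. For (a), I would introduce the two smooth curves $\gamma_1(t) := {\rm conj}_g(\exp(tY)) = g\exp(tY)g^{-1}$ and $\gamma_2(t) := \exp(t\,{\rm Ad}_g(Y))$ in $G$. Since ${\rm conj}_g\colon G\to G$ is a Lie group automorphism, $\gamma_1$ is a one-parameter subgroup; $\gamma_2$ is one by construction. By definition of the adjoint representation, $\gamma_1'(0) = {\rm Ad}_g(Y)$, and clearly $\gamma_2'(0) = {\rm Ad}_g(Y)$. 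Uniqueness of one-parameter subgroups with prescribed generator yields $\gamma_1 \equiv \gamma_2$, and evaluating at $t=1$ gives (a).

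For (b), the cleanest route is functoriality of the exponential under Lie group homomorphisms: if $\phi\colon G \to H$ is a smooth homomorphism with derivative $d\phi_e\colon \mathfrak{g}\to\mathfrak{h}$, then $\phi \circ \exp_G = \exp_H \circ\, d\phi_e$. I would apply this to the adjoint representation ${\rm Ad}\colon G \to {\rm GL}(\mathfrak{g})$, whose derivative at the identity is ${\rm ad}\colon \mathfrak{g}\to\mathfrak{gl}(\mathfrak{g})$. The functoriality statement then reads
\[
{\rm Ad}_{\exp(X)} \;=\; \exp_{{\rm GL}(\mathfrak{g})}({\rm ad}_X) \;=\; e^{{\rm ad}_X},
\]
and evaluating both sides on $Y\in\mathfrak{g}$ produces exactly (b). The functoriality of $\exp$ itself is proved by noting that a Lie group homomorphism $\phi$ carries the left-invariant vector field $X^L$ on $G$ to the left-invariant field $(d\phi_e X)^L$ on $H$, so integral curves through the identity correspond under $\phi$; alternatively, one may simply observe that $t\mapsto \phi(\exp_G(tX))$ is a one-parameter subgroup of $H$ with initial velocity $d\phi_e(X)$ and invoke uniqueness.

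Neither part is conceptually deep; the statement is a standard consequence of the construction of $\exp$. The only real obstacle is invoking the two external facts cleanly: existence/uniqueness of one-parameter subgroups with a given tangent at $e$, and naturality of $\exp$ under smooth homomorphisms. Both are part of the Lie theoretic background the authors reference (Varadarajan), so no new argument beyond assembling these ingredients is needed. Note moreover that (b) can be derived \emph{a posteriori} from (a) by applying (a) with $g = \exp(X)$ and comparing with the formula ${\rm Ad}_{\exp(X)} = e^{{\rm ad}_X}$; this gives a self-contained chain once the functoriality of $\exp$ under $\rm Ad$ is in hand.
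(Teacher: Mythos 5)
Your argument is correct and is the standard one; the paper itself offers no proof of this proposition, deferring instead to Varadarajan (p.~104, formula (2.13.7) and Theorem 2.13.2), and your two ingredients --- uniqueness of one-parameter subgroups with prescribed initial velocity for (a), and naturality of $\exp$ under the homomorphism ${\rm Ad}\colon G\to{\rm Gl}({\mathfrak g})$ for (b) --- are exactly what that reference supplies. Nothing is missing.
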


(Observe that we use the usual imprecision concerning the exponential function $\exp:{\mathfrak g}
\to G$ and the exponential function $\exp:{\mathfrak g}{\mathfrak l}({\mathfrak g})\to
{\rm Gl}({\mathfrak g})$.)  

\subsection{Leibniz algebras as subalgebras of a hemi-semi-direct product}

\begin{lem}
Let ${\mathfrak g}$ be a Lie algebra and $V$ be a ${\mathfrak g}$-module. The direct
sum $V\oplus{\mathfrak g}$ together with the bracket
$$[(v,X),(v',X')]\,=\,(X(v'),[X,X'])$$
becomes a Leibniz algebra, called the hemi-semi-direct product $V\times_{\rm hs}{\mathfrak g}$
of $V$ and ${\mathfrak g}$.
\end{lem}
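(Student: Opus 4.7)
The statement is a pure verification: one just needs to check the (left) Leibniz identity
$$[(v_1,X_1),[(v_2,X_2),(v_3,X_3)]]\,=\,[[(v_1,X_1),(v_2,X_2)],(v_3,X_3)]+[(v_2,X_2),[(v_1,X_1),(v_3,X_3)]]$$
for the proposed bracket $[(v,X),(v',X')]=(X(v'),[X,X'])$. Since the bracket is clearly $k$-bilinear (both $X\mapsto X(v')$ and the Lie bracket of $\mathfrak{g}$ are bilinear), the only content is this identity.

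The plan is to expand both sides and separate the $V$-component from the $\mathfrak{g}$-component. On the $\mathfrak{g}$-component, the identity reduces exactly to the Jacobi identity in $\mathfrak{g}$, which holds by assumption. On the $V$-component, the left-hand side gives $X_1(X_2(v_3))$, while the right-hand side gives $[X_1,X_2](v_3)+X_2(X_1(v_3))$; equality here is precisely the statement that $V$ is a $\mathfrak{g}$-module, i.e.\ that the map $\mathfrak{g}\to\mathop{\mathrm{End}}(V)$ defining the action is a Lie algebra homomorphism.

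There is essentially no obstacle: the asymmetry between the two sides is designed so that the $\mathfrak{g}$-part contributes nothing mixed with the $V$-part (the first slot $v_1$ never enters the computation, because in the bracket $(v,X)$ acts on the other entry only through $X$). The one point to underline is exactly this feature, which also explains why skew-symmetry fails in general: for instance, $[(v,0),(v',X')]=(0,0)$ while $[(v',X'),(v,0)]=(X'(v),0)$ need not vanish, so the resulting Leibniz algebra is genuinely non-Lie whenever the $\mathfrak{g}$-action on $V$ is non-trivial. I would conclude by pointing out that the name \emph{hemi-semi-direct} is justified by this one-sided behaviour, in contrast with the usual semi-direct product of Lie algebras.
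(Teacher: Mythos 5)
Your verification is correct and is exactly the computation the paper has in mind — it states the lemma with only the remark that the bracket ``is readily verified'' to be Leibniz, and your expansion (Jacobi identity on the $\mathfrak{g}$-component, the module axiom $[X_1,X_2](v)=X_1(X_2(v))-X_2(X_1(v))$ on the $V$-component) is that verification. Your added observation on the failure of skew-symmetry also matches the paper's remark that the bracket is Lie only when the module is trivial.
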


It is readily verified that this bracket gives a Leibniz bracket which is Lie only if the 
${\mathfrak g}$-module is trivial. This structure came up in the search of the integration
of Courant algebroids. Indeed, when the problem is formulated algebraically, there is only ``one 
half'' of the semi-direct product Lie bracket
$$[(v,X),(v',X')]\,=\,(X(v')-X'(v),[X,X'])$$
playing a role. Thus the term hemi-semi-direct product. 

Kinyon and Weinstein showed in \cite{KinWei} that every Leibniz algebra may be embedded into
a hemi-semi-direct product Leibniz algebra.
 
\vspace{.5cm}
\noindent{\bf Example}: Let a Leibniz algebra ${\mathfrak h}$ be given. Our most important example 
of a hemi-semi-direct product is to choose ${\mathfrak g}{\mathfrak l}({\mathfrak h})$ as the Lie
algebra and ${\mathfrak h}$ as the module in the above construction. Kinyon and Weinstein noticed
that every Leibniz algebra may be embedded in this type of hemi-semi-direct product
${\mathfrak h}\times_{\rm hs}{\mathfrak g}{\mathfrak l}({\mathfrak h})$. The embedding 
map is simply $X\mapsto (X,{\rm ad}_X)$. In other words, the given Leibniz algebra ${\mathfrak h}$
is seen as a subalgebra of the hemi-semi-direct product ${\mathfrak h}\times_{\rm hs}{\mathfrak g}
{\mathfrak l}({\mathfrak h})$ by regarding it as the graph of the adjoint representation
$${\rm ad}:{\mathfrak h}\to{\mathfrak g}{\mathfrak l}({\mathfrak h}),\,\,\,\,X\mapsto{\rm ad}_X,$$
where for each $Y\in{\mathfrak h}$, ${\rm ad}_X(Y):=[X,Y]$.
\vspace{.5cm} 

One can change this example somehow by considering the Lie algebra of derivations ${\rm der}(
{\mathfrak h})$ instead of the Lie algebra ${\mathfrak g}{\mathfrak l}({\mathfrak h})$.
Notice that the derivations ${\rm der}({\mathfrak h})$ of a Leibniz algebra ${\mathfrak h}$ 
form indeed a Lie algebra by Lemma \ref{derivations_Leibniz}. We 
summarize this discussion in the following proposition, due to Kinyon-Weinstein {\it loc. cit.}: 

\begin{prop} \label{Kinyon_Weinstein}
Every Leibniz algebra ${\mathfrak h}$ is embedded as a subalgebra of the hemi-semi-direct product 
${\mathfrak h}\times_{\rm hs}{\rm der}({\mathfrak h})$.
\end{prop}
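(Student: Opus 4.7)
The plan is to write down the embedding explicitly as $\iota \colon {\mathfrak h} \to {\mathfrak h} \times_{\rm hs} {\rm der}({\mathfrak h})$, $X \mapsto (X,{\rm ad}_X)$, and check in order that (a) the target is a well-defined Leibniz algebra, (b) $\iota$ lands in it, (c) $\iota$ is linear and injective, and (d) $\iota$ preserves the bracket. Note first that by Lemma \ref{derivations_Leibniz} the space ${\rm der}({\mathfrak h})$ is a Lie algebra under the commutator bracket, and ${\mathfrak h}$ is naturally a module over it by evaluation $D \cdot v := D(v)$; hence the hemi-semi-direct product construction of the preceding lemma applies and produces a Leibniz algebra structure on ${\mathfrak h} \oplus {\rm der}({\mathfrak h})$. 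Step (b) follows from the remark just after the definition of a derivation: the Leibniz identity says precisely that ${\rm ad}_X \in {\rm der}({\mathfrak h})$ for every $X$. Linearity of $\iota$ comes from bilinearity of $[\cdot,\cdot]$, and injectivity is immediate because the first component of $\iota(X)$ already recovers $X$.

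The only step that requires a real computation is (d). Using the hemi-semi-direct product bracket,
$$[\iota(X),\iota(Y)] \;=\; [(X,{\rm ad}_X),(Y,{\rm ad}_Y)] \;=\; \bigl({\rm ad}_X(Y),\,[{\rm ad}_X,{\rm ad}_Y]\bigr),$$
which must match $\iota([X,Y]) = ([X,Y],{\rm ad}_{[X,Y]})$. The first components coincide by definition of ${\rm ad}_X$. The second components coincide iff $[{\rm ad}_X,{\rm ad}_Y] = {\rm ad}_{[X,Y]}$; evaluating both sides on an arbitrary $Z \in {\mathfrak h}$ reduces this to $[X,[Y,Z]] - [Y,[X,Z]] = [[X,Y],Z]$, which is exactly the left Leibniz identity. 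This is the one and only place where the Leibniz axiom on ${\mathfrak h}$ is used.

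There is no genuine obstacle: once the hemi-semi-direct product bracket is unpacked, the verification reduces to the identity $[{\rm ad}_X,{\rm ad}_Y] = {\rm ad}_{[X,Y]}$, which is a direct rewriting of the Leibniz identity. The only thing worth double-checking is a potential sign/convention discrepancy between the commutator bracket on ${\rm der}({\mathfrak h})$ (as in Lemma \ref{derivations_Leibniz}) and the left-bracket convention $[X,Y]$ on ${\mathfrak h}$; with the conventions fixed in this paper these line up, so the homomorphism check goes through as stated.
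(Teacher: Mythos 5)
Your proposal is correct and follows essentially the same route as the paper: the paper also realizes ${\mathfrak h}$ as the graph of ${\rm ad}\colon {\mathfrak h}\to{\rm der}({\mathfrak h})$ via $X\mapsto(X,{\rm ad}_X)$, with the key point being that the left Leibniz identity is exactly the statement that each ${\rm ad}_X$ is a derivation and that $[{\rm ad}_X,{\rm ad}_Y]={\rm ad}_{[X,Y]}$. Your computation of the second components and your check that the conventions $[D,D']=D\circ D'-D'\circ D$ and ${\rm ad}_X=[X,-]$ are compatible are both accurate.
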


\subsection{On the BCH-formula}

The Ba\-ker\--Camp\-bell\--Haus\-dorff formula (BCH-formula) 
\begin{eqnarray*}
X*Y &:=&\log\left( \exp(X)\exp(Y)\right)\,=:  \\
=:{\rm BCH}(X,Y)&:=& X+Y+\frac{1}{2}[X,Y]+\frac{1}{12}\left([X,[X,Y]]+
[Y,[Y,X]]\right)+\ldots\,,
\end{eqnarray*}
for two elements $X,Y$ in a Lie algebra ${\mathfrak g}$ defines a {\it local} Lie group structure
in a neighbourhood of $0$ in ${\mathfrak g}$. It is in general not a global group structure, because 
the bracket expression need not converge. 

\begin{defi}   \label{BCH_neighborhood}
Let us call BCH-neighborhood a $0$-neighborhood 
$U$ in a Lie algebra
${\mathfrak g}$ with the following two properties:
\begin{enumerate}
\item The BCH series converges on $U\times U$ and defines thus a (local) group product
$*:U\times U\to{\mathfrak g}$ by
$$X*Y\,=\,\log\left( \exp(X)\exp(Y)\right).$$ 
\item There exists a $0$-neighborhood $V\subset U$ where $\exp:V\to\exp(V)$ is a diffeomorphism (onto the
open set $\exp(V)$).
\end{enumerate}
\end{defi}

It is easily shown (using the derivative of the exponential map)
that such a BCH-neighborhood exists for every (finite dimensional) Lie algebra ${\mathfrak g}$.

Now we pass to the conjugation with respect to the BCH-product. 
Note that due to associativity of the BCH-formula, we have
\begin{eqnarray*}
{\rm conj}_*(X,Y)&:=&\log\left( \exp(X)\exp(Y)\exp(-X)\right)\,=\,   \\
&=&{\rm BCH}({\rm BCH}(X,Y),-X)\,=\, \\
&=&{\rm BCH}(X,{\rm BCH}(Y,-X)).
\end{eqnarray*}
It is not so well-known that the conjugation associated to the BCH-multiplication has a much simpler
formula which converges always: 

\begin{lem}  \label{formula_BCH_conjugation}
The explicit formula BCH-conjugation ${\rm conj}_*$ for a Lie algebra ${\mathfrak g}$ is:
\begin{eqnarray*}
{\rm conj}_*(X,Y)&=&\exp({\rm ad}_X)(Y) \\
&=& Y + [X,Y] + \frac{1}{2}[X,[X,Y]] + \frac{1}{6}[X,[X,[X,Y]]]+\ldots
\end{eqnarray*}
\end{lem}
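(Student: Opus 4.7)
The plan is to derive the identity directly from the two naturality properties of the exponential map stated in Proposition \ref{naturality_properties}. First I would rewrite the left-hand side using part (a) of that proposition. Taking $g = \exp(X)$ in the identity $\mathrm{conj}_g(\exp(Y)) = \exp(\mathrm{Ad}_g(Y))$, I get
$$\exp(X)\exp(Y)\exp(-X) \,=\, \exp\bigl(\mathrm{Ad}_{\exp(X)}(Y)\bigr).$$
Then I would apply part (b), which says $\mathrm{Ad}_{\exp(X)}(Y) = \exp(\mathrm{ad}_X)(Y)$, to rewrite the exponent:
$$\exp(X)\exp(Y)\exp(-X) \,=\, \exp\bigl(\exp(\mathrm{ad}_X)(Y)\bigr).$$

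Next I would take logarithms. For $X, Y$ sufficiently close to $0$ (inside a BCH-neighborhood in the sense of Definition \ref{BCH_neighborhood}), the map $\exp$ is a diffeomorphism onto its image and the BCH product $\exp(X)\exp(Y)\exp(-X)$ stays in $\exp(V)$, so that $\log$ is the genuine inverse of $\exp$ on both sides. Applying $\log$ then gives
$$\mathrm{conj}_*(X,Y) \,=\, \log\bigl(\exp(X)\exp(Y)\exp(-X)\bigr) \,=\, \exp(\mathrm{ad}_X)(Y),$$
which is the first claimed identity. The second equality is merely the power series expansion
$$\exp(\mathrm{ad}_X)(Y) \,=\, \sum_{k=0}^\infty \frac{1}{k!}\,\mathrm{ad}_X^k(Y) \,=\, Y + [X,Y] + \tfrac{1}{2}[X,[X,Y]] + \tfrac{1}{6}[X,[X,[X,Y]]] + \ldots$$

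The only genuine subtlety is the domain question: the formula $\mathrm{conj}_*(X,Y) = \log(\exp(X)\exp(Y)\exp(-X))$ a priori requires the BCH series defining it to converge, while the right-hand side $\exp(\mathrm{ad}_X)(Y)$ converges on all of $\mathfrak{g}\times\mathfrak{g}$ (which is precisely the ``much simpler formula which converges always'' emphasized in the statement). I would handle this by establishing the identity first on a BCH-neighborhood where both sides are literally defined via the Lie group $G$ and $\exp$/$\log$ are mutually inverse, and then extend it to all of $\mathfrak{g}\times\mathfrak{g}$ by analytic continuation: both sides of $\mathrm{conj}_*(X,Y) = \exp(\mathrm{ad}_X)(Y)$ are real-analytic in $(X,Y)$ on the domain where the left-hand side is defined, the right-hand side is entire, and they agree on an open set, so the right-hand side is the natural (and only) real-analytic extension. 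This justifies taking the right-hand formula as the definition of $\mathrm{conj}_*$ on the whole of $\mathfrak{g}\times\mathfrak{g}$. \hfill$\Box$
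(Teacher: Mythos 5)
Your argument is essentially identical to the paper's proof: both apply parts (a) and (b) of Proposition \ref{naturality_properties} to rewrite $\exp(X)\exp(Y)\exp(-X)$ as $\exp\bigl(\exp(\mathrm{ad}_X)(Y)\bigr)$ and then take the logarithm. Your closing discussion of the domain of convergence and the extension by analytic continuation is more careful than the paper, which simply invokes ``taking the formal logarithm'' at that step.
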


\pr By the naturality properties in Proposition \ref{naturality_properties}, we compute for all
$X,Y\in{\mathfrak g}$:
\begin{eqnarray*}
\exp(X)\exp(Y)\exp(-X)&=&{\rm conj}_{\exp(X)}(\exp(Y))\\
&=&\exp({\rm Ad}_{\exp(X)}(Y)) \\
&=&\exp(\exp({\rm ad}_X)(Y))
\end{eqnarray*}
The formula follows now from taking the formal logarithm.\fin 

This conjugation operation is thus a perfectly {\it global} operation, but which is only {\it locally}
the conjugation with respect to a group product.

Note that this operation  
$$(X,Y)\mapsto \exp({\rm ad}_X)(Y)$$
makes also sense for elements $X,Y$ in any finite dimensional Leibniz algebra ${\mathfrak h}$. The
exponential $\exp({\rm ad}_X)$ is the (inner) automorphism (see Lemma \ref{exponential_of_derivation})
with respect to the (inner) derivation ${\rm ad}_X$
which is associated to each element $X\in{\mathfrak h}$.

\subsection{Lie racks}

Recall the notion of a rack: It comes from axiomatizing the notion of conjugation in a group
and plays its role in the present context as the structure integrating Leibniz algebras. 

\begin{defi}
Let $X$ be a set together with a binary operation denoted $(x,y)\mapsto x\rhd y$
such that for all $x\in X$, the map $y\mapsto x\rhd y$ is bijective and
for all $x,y,z\in X$,
$$x\rhd(y\rhd z)\,=\,(x\rhd y)\rhd(x\rhd z).$$
Then we call $X$ (or more precisely $(X,\rhd)$) a (left) rack. In case the map $y\mapsto x\rhd y$
is not necessarily bijective for all $x\in X$, $X$ is called a (left) shelf. 
\end{defi}  

As already mentioned, an example of a rack is the conjugation in a group $G$. The 
rack operation is in this case given by $(g,h)\mapsto ghg^{-1}$. Finite racks have served to 
define knot, link and tangle invariants, see for example \cite{FenRou}. 
There is also the notion of a right rack. This is by 
definition a set $X$ together with a binary operation $(x,y)\mapsto x\lhd y$ such that all maps 
$x\mapsto x\lhd y$ are bijective and 
$$(x\lhd y)\lhd z\,=\,(x\lhd z)\lhd(y\lhd z).$$
There are at least two ways to transform a left rack into a right rack and vice-versa. The first is to
take the {\it opposite rack} $x\lhd y:=y\rhd x$, the second is to take the {\it inverse rack}
$x\lhd y:=(y\rhd-)^{-1}(x)$.

\begin{defi}  \label{definition_rack_action}
Let $R$ be a rack and $X$ be a set. We say that $R$ acts on $X$ (on the right) 
(or that $X$ is a right $R$-set) in case
for all $r\in R$, there are bijections $(\cdot r):X\to X$ such that for all $x\in X$ and all $r,r'\in R$:
$$(x\cdot r)\cdot r'\,=\,(x\cdot r')\cdot (r\lhd r').$$ 
\end{defi}
There is also the notion of a left action where the corresponding identity reads
$$r\cdot (r'\cdot x)\,=\,(r\rhd r')\cdot (r\cdot x).$$

Clearly, the adjoint action ${\rm Ad}_r:R\to R$ defined by ${\rm Ad}_r(r'):=r\rhd r'$ in a left rack $R$
is a left action of $R$ on itself. In the same way the adjoint action of a right rack on itself is a right
action. 

\begin{lem}   \label{coadjoint_action}
Let $R$ be a left rack such that the underlying set is a finite dimensional vector 
space with linear dual $R^*$. 
Then there exists
a coadjoint action ${\rm Ad}^*:R\times R^*\to R^*$ defined for all $r,r'\in R$ and all $f\in R^*$ by
$$({\rm Ad}^*_r(f))(r')\,:=\,f((r\rhd-)^{-1}(r')).$$
The coadjoint action is a left action.
\end{lem}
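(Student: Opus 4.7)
The plan is to verify directly the two items required to call ${\rm Ad}^*$ a left action of $R$ on $R^*$: namely, that for each $r\in R$ the map ${\rm Ad}^*_r\colon R^*\to R^*$ is a bijection, and that the left‐action identity
$$
{\rm Ad}^*_r\bigl({\rm Ad}^*_{r'}(f)\bigr)\;=\;{\rm Ad}^*_{r\rhd r'}\bigl({\rm Ad}^*_{r}(f)\bigr)
$$
holds for all $r,r'\in R$ and $f\in R^*$. Throughout I will abbreviate $\phi_r:=(r\rhd-)\colon R\to R$, which is a bijection by the very definition of a (left) rack, so that $\phi_r^{-1}$ exists set-theoretically and the prescription $({\rm Ad}^*_r(f))(r')=f(\phi_r^{-1}(r'))$ makes sense.

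First I would note that ${\rm Ad}^*_r$ is a bijection of $R^*$: the map $g\mapsto g\circ\phi_r$ provides a two-sided inverse, since composing a functional on $R$ with $\phi_r$ and then with $\phi_r^{-1}$ (or vice-versa) gives back the original functional. (Implicit here, consistently with the rack actions of interest in the paper such as $X\rhd Y=e^{{\rm ad}_X}(Y)$ which are linear in the second argument, is that $\phi_r$ is a linear bijection so that pullback preserves linear functionals; if one instead takes $R^*$ to be the set of all scalar functions on $R$, no such assumption is needed.)

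Next I would verify the compatibility identity by a direct unwinding. Evaluating both sides on an arbitrary $s\in R$ gives
$$
\bigl({\rm Ad}^*_r({\rm Ad}^*_{r'}(f))\bigr)(s)\;=\;f\bigl(\phi_{r'}^{-1}(\phi_r^{-1}(s))\bigr),
\qquad
\bigl({\rm Ad}^*_{r\rhd r'}({\rm Ad}^*_{r}(f))\bigr)(s)\;=\;f\bigl(\phi_r^{-1}(\phi_{r\rhd r'}^{-1}(s))\bigr),
$$
so the desired equality reduces to $\phi_{r'}^{-1}\circ\phi_r^{-1}=\phi_r^{-1}\circ\phi_{r\rhd r'}^{-1}$. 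Inverting both sides, this is precisely
$$
\phi_{r\rhd r'}\circ\phi_r\;=\;\phi_r\circ\phi_{r'},
$$
which evaluated at any $t\in R$ says $(r\rhd r')\rhd(r\rhd t)=r\rhd(r'\rhd t)$, i.e. the autodistributivity axiom of the rack $R$.

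The main obstacle is really only bookkeeping: matching the order of the inverses $\phi_r^{-1}$ with the order in which the rack axiom reads. Once this is done, the whole statement collapses to a single application of autodistributivity, exactly as expected from the analogous fact that the coadjoint action of a Lie group on the dual of its Lie algebra is a consequence of conjugation being a group action. No deeper input (e.g. associativity, Leibniz identity, BCH convergence) is needed.
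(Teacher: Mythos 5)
Your proof is correct and takes essentially the same route as the paper's: both arguments unwind the definition on a test element and reduce the left-action identity to a single application of the autodistributivity axiom (the paper evaluates both sides at points of the form $r\rhd(r'\rhd r'')$, which is just your reduction with the test point parametrized through the bijection $\phi_r\circ\phi_{r'}$). Your explicit check that ${\rm Ad}^*_r$ is bijective, and your remark that $\phi_r$ must be linear for the pullback to land in $R^*$, address points the paper leaves tacit.
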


\pr For the proof, write simply $r\cdot f$ for ${\rm Ad}^*_r(f)$. Then
\begin{eqnarray*}
(r\cdot(r'\cdot f))(r\rhd(r'\rhd r''))&=&(r'\cdot f)((r\rhd-)^{-1}(r\rhd(r'\rhd r''))) \\
&=&(r'\cdot f)(r'\rhd r'') \\
&=& f((r'\rhd-)^{-1}(r'\rhd r'')) \\
&=& f(r'').
\end{eqnarray*} 

We also have

\begin{eqnarray*}
((r\rhd r')\cdot(r\cdot f))(r\rhd(r'\rhd r''))&=&((r\rhd r')\cdot(r\cdot f))((r\rhd r')\rhd(r\rhd r'')) \\
&=&(r\cdot f)(((r\rhd r')\rhd-)^{-1}((r\rhd r')\rhd(r\rhd r''))) \\
&=&(r\cdot f)(r\rhd r'') \\
&=&f((r\rhd-)^{-1}(r\rhd r'')) \\
&=&f(r'').
\end{eqnarray*}
This shows that 
$$r\cdot(r'\cdot f)\,=\,(r\rhd r')\cdot(r\cdot f),$$
thus the coadjoint action is a left action.
\fin 

\begin{rem}
Curiously, this does not seem to work with the opposite rack structure
replacing the inverse rack structure.
\end{rem} 

In the following, we will need pointed local Lie racks.

\begin{defi}
A pointed rack $(X,\rhd,1)$ is a set $X$ with a binary operation $\rhd$ and an element $1\in X$ 
such that the following axioms are satisfied:
\begin{enumerate}
\item $x\rhd(y\rhd z)\,=\,(x\rhd y)\rhd(x\rhd z)$ for all $x,y,z\in X$,
\item For each $a,b\in X$, there exists a unique $x\in X$ such that $a\rhd x\,=\,b$,
\item $1\rhd x\,=\,x$ and $x\rhd 1\,=\,1$ for all $x\in X$.
\end{enumerate}
\end{defi} 

Once again, the conjugation rack of a group is an example of a pointed rack. 

\begin{defi}
\begin{enumerate}
\item A Lie rack $X$ is a manifold and a pointed smooth rack, i.e. the structure maps are 
smooth.  
\item A local Lie rack is a manifold $X$ with an open subset $\Omega\subset X\times X$
where a Lie rack product $\rhd$ is defined such that 
\begin{enumerate}
\item If $(x,y),(x,z),(y,z),(x,y\rhd z),(x\rhd y,x\rhd z)\in\Omega$, then 
$x\rhd(y\rhd z)\,=\,(x\rhd y)\rhd(x\rhd z)$.
\item If $(x,y),(x,z)\in\Omega$ and $x\rhd y=x\rhd z$, then $y=z$.
\item For all $x\in X$, $(1,x),(x,1)\in\Omega$ and as usual $1\rhd x=x$ and
$x\rhd 1=1$.     
\end{enumerate}
\end{enumerate}  
\end{defi}

Examples of Lie racks include obviously the conjugation racks associated to Lie groups. Another
example which will play an important role in the sequel is the following:\\

\noindent{\bf Example:} Let $G$ be a Lie group and $V$ be a $G$-module. On $X:=V\times G$, we define 
a binary operation $\rhd$ by
$$(v,g)\rhd(v',g')\,=\,(g(v'),gg'g^{-1})$$
for all $v,v'\in V$ and all $g,g'\in G$. $X$ is a Lie rack with unit $1:=(0,1)$ which is called 
a {\it linear Lie rack}. This is the "group-analog" of the hemi-semi-direct product of a Lie algebra
with its representation, and we denote it by $V\times_{\rm hs}G$.   

Let us define more generally this hemi-semi-direct product of racks:

\begin{defi}
Let $R$ be a rack and $A$ be a rack module in the sense of Definition \ref{definition_rack_action}. 
The hemi-semi-direct product
$A\times_{\rm hs}R$ of $R$ with $A$ is the following rack structure on the direct product set
$A\times R$:
$$(a,r)\rhd(a',r')\,:=\,(r(a'),r\rhd r').$$
\end{defi}

\noindent One verifies easily that this gives indeed a rack structure.

Now let us come to digroups:

\begin{defi}
A digroup $(H,\vdash,\dashv)$ is a set $H$ together with two binary operations $\vdash$ and $\dashv$
satisfying the following axioms. For all $x,y,z\in H$,
\begin{enumerate}
\item $(H,\vdash)$ and $(H,\dashv)$ are semigroups,
\item $x\vdash(y\dashv z)\,=\,(x \vdash y)\dashv z$,
\item $x\dashv(y\vdash z)\,=\,x \dashv (y\dashv z)$,
\item $(x\dashv y)\vdash z\,=\,(x \vdash y)\vdash z$,
\item there exists $1\in H$ such that $1\vdash x\,=\,x\dashv 1\,=\,x$ for all $x\in H$,
\item for all $x\in H$, there exists $x^{-1}\in H$ such that $x\vdash x^{-1}\,=\,x^{-1}\dashv x\,=\,1$.
\end{enumerate}
\end{defi} 

An element $e\in H$ in a digroup $H$ is called a {\it bar unit} in case 
$e\vdash x\,=\,x\dashv e\,=\,x$ for all $x\in H$. Bar units exist in a digroup, but are not 
necessarily unique. A digroup is a group if and only if $\vdash\,\,=\,\,\dashv$ and $1$ is the 
unique bar unit.

There is a digroup which resembles very much the linear Lie rack:

\begin{rema}   \label{remark_linear_digroup} 
Let $G$ be a Lie group and $M$ be a $G$-module. Define on $H:=M\times G$ the structure of a digroup by
$$(u,g)\vdash(v,h)\,:=\,(g(v),gh)$$
and
$$(u,g)\dashv(v,h)\,:=\,(u,gh)$$
for all $u,v\in M$ and all $g,h\in G$. Then $M\times G$ is a Lie digroup with distinguished bar unit 
$(e,1)$. The inverse of an element $(u,g)$ is $(e,h^{-1})$. This Lie digroup is called the 
linear Lie digroup associated to $G$ and $M$.  
\end{rema} 

Digroups give rise to racks in the following way:

\begin{prop}    \label{digroup_to_rack}
Let $(H,\vdash,\dashv)$ be a digroup and put
\begin{equation}   \label{digroup_rack}
x\rhd y\,:=\,x\vdash y\dashv x^{-1}
\end{equation}
for all $x,y\in H$. Then $(H,\rhd)$ is a rack, pointed in $1$. Moreover, in case $(H,\vdash,\dashv)$ 
is a Lie digroup (i.e. all structures are smooth), $(H,\rhd)$ is a Lie rack.
\end{prop}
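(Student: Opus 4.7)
The plan is to verify the three pointed-rack axioms for $(H,\rhd,1)$ with $x\rhd y:=x\vdash y\dashv x^{-1}$, working entirely from the digroup axioms (1)--(6). Before touching these axioms directly I would first establish a small battery of cancellation identities. The most useful are $(x^{-1}\vdash x)\vdash z=z$ (from axiom (4) combined with $x^{-1}\dashv x=1$ and axiom (5)), the bar-unit commutation $x\vdash 1=1\dashv x$ (from axiom (2) applied to $x\vdash(x^{-1}\dashv x)$), and, by combining these, the vanishing $x\vdash 1\dashv x^{-1}=1$. Dually, axiom (3) delivers $z\dashv(y\vdash y^{-1})=z\dashv(y\dashv y^{-1})$, which I will use freely. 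Informally, these identities encode the fact that $x^{-1}$ is a left inverse for $\vdash$ once an element is allowed to re-enter through $\vdash$, and that conjugating the unit produces the unit.

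Self-distributivity is then a direct calculation. On the left-hand side, repeated use of axiom (2) yields $x\rhd(y\rhd z)=(x\vdash y\vdash z)\dashv y^{-1}\dashv x^{-1}$. For the right-hand side, axiom (4) lets me replace $(x\rhd y)\vdash$ by $x\vdash y\vdash x^{-1}\vdash$, and the cancellation $x^{-1}\vdash(x\vdash z\dashv x^{-1})=z\dashv x^{-1}$ collapses the middle to give $(x\rhd y)\rhd(x\rhd z)=(x\vdash y\vdash z)\dashv x^{-1}\dashv(x\rhd y)^{-1}$. To close the identification I must choose $(x\rhd y)^{-1}$: one checks that $x\rhd y^{-1}=x\vdash y^{-1}\dashv x^{-1}$ serves as an inverse, since both defining equations $(x\rhd y)\vdash(x\rhd y^{-1})=1$ and $(x\rhd y^{-1})\dashv(x\rhd y)=1$ collapse, via the same pushing of $\vdash$ through $\dashv$, to the preliminary identity $x\vdash 1\dashv x^{-1}=1$. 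With this substitution in place, axiom (3) unfolds the trailing $\dashv(x\rhd y)^{-1}$ into $\dashv x\dashv y^{-1}\dashv x^{-1}$, the inner pair $x^{-1}\dashv x$ cancels to $1$, and both sides match.

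The pointed-unit axioms are immediate: $1\rhd x=1\vdash x\dashv 1=x$ using that $1^{-1}=1$, and $x\rhd 1=1$ by the preliminary vanishing. For the uniqueness axiom I would propose $b\mapsto x^{-1}\rhd b$ as the two-sided inverse of $y\mapsto x\rhd y$. The composition $x\rhd(x^{-1}\rhd b)=b$ reduces after axiom (2) manipulations to $(x^{-1})^{-1}\dashv x^{-1}=1$, which is axiom (6) applied to $x^{-1}$. The reverse composition $x^{-1}\rhd(x\rhd y)=y$ reduces instead to $x^{-1}\dashv(x^{-1})^{-1}=1$, and this is the main non-routine point, since axiom (6) only supplies $x^{-1}\vdash(x^{-1})^{-1}=1$, not the $\dashv$-version. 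I would resolve this by first showing that $e:=x^{-1}\dashv(x^{-1})^{-1}$ is a bar unit via axioms (3) and (4) applied to $x^{-1}\vdash(x^{-1})^{-1}=1$, then using the short identity $x^{-1}=1\dashv x^{-1}$ (itself a direct consequence of $y^{-1}\dashv y=1$ and axiom (3), as in the uniqueness discussion for inverses) to rewrite $x^{-1}\dashv(x^{-1})^{-1}=1\dashv e$, which equals $1$ because $e$ is a bar unit. Finally, smoothness in the Lie case is automatic, since $\rhd$ is built from the smooth operations $\vdash$, $\dashv$, and inversion.
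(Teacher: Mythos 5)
Your proof is correct. Note that the paper states this proposition without giving any proof at all (it is imported from Kinyon's work on digroups and racks), so there is no argument in the text to compare against; your verification directly from the six digroup axioms is a complete, self-contained replacement. Two small remarks on your route. First, the identity $z\dashv(y\dashv y^{-1})=z$, which you already extract from axiom (3), disposes of the reverse composition immediately: $x^{-1}\rhd(x\rhd y)$ reduces to $y\dashv\bigl(x^{-1}\dashv(x^{-1})^{-1}\bigr)$, which equals $y$ by that identity with $y$ replaced by $x^{-1}$ --- so the detour through bar units to establish $x^{-1}\dashv(x^{-1})^{-1}=1$ is correct but unnecessary. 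Second, when you substitute $(x\rhd y)^{-1}=x\rhd y^{-1}$ in the self-distributivity computation, you are tacitly using that inverses in a digroup are unique; otherwise showing that $x\rhd y^{-1}$ is \emph{an} inverse of $x\rhd y$ would not identify it with the designated $(x\rhd y)^{-1}$ appearing in the rack formula. Uniqueness does hold and follows quickly from axioms (1), (4), (5): if $a$ and $b$ are both inverses of $x$, then $a=1\vdash a=(b\dashv x)\vdash a=(b\vdash x)\vdash a=b\vdash(x\vdash a)=b\vdash 1$, symmetrically $b=a\vdash 1$, hence $a=(a\vdash 1)\vdash 1=a\vdash(1\vdash 1)=a\vdash 1=b$; this short lemma should be stated explicitly rather than alluded to. With those two points made precise, the argument is airtight, and the smoothness claim in the Lie case is indeed immediate since $\rhd$ is a composite of the smooth structure maps.
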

 
In the case of the example in Remark \ref{remark_linear_digroup}, the obtained Lie rack 
is the above described linear Lie rack $M\times_{\rm hs}G$. 
In this sense every linear Lie rack ``comes from'' a linear Lie digroup. 



\begin{rema}
There are several ways to construct a group out of a rack $X$. The {\it associated group}
$As(X)$ is the quotient of the free group on $X$ by the normal subgroup generated by
the set $\{(xy^{-1}x^{-1})(x\rhd y)\,:\,x,y\in X\}$. For pointed racks, one modifies 
this definition such that $1$ becomes the unit of $As(X)$. 
\end{rema}

\subsection{From split Leibniz algebras to Lie racks}

In this subsection, we summarize Kinyon's approach \cite{Kin}
to the integration of (split) Leibniz algebras by Lie racks.

Kinyon shows in \cite{Kin} the following theorem which is at the heart of all our attempts to
integrate Leibniz algebras. 

\begin{theo} \label{Kinyon1}
Let $(X,\rhd,1)$ be a Lie rack, and let ${\mathfrak h}:=T_1X$. Then there exists a bilinear map
$[,]:{\mathfrak h}\times{\mathfrak h}\to{\mathfrak h}$ such that
\begin{enumerate}
\item $({\mathfrak h},[,])$ is a (left) Leibniz algebra,
\item for each $x\in X$, the tangent map $\Phi(x):=T_1\phi(x)$ of the left translation map 
$\phi(x):X\to X$, $y\mapsto x\rhd y$, is an automorphism of $({\mathfrak h},[,])$,
\item if ${\rm ad}:{\mathfrak h}\to{\mathfrak g}{\mathfrak l}({\mathfrak h})$ is defined by
$Y\mapsto{\rm ad}_X(Y):=[X,Y]$, then ${\rm ad}=T_1\Phi$.
\end{enumerate}
\end{theo}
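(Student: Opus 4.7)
The plan is to differentiate the rack operation twice at the distinguished point $1$, in direct analogy with how the Lie bracket of the Lie algebra of a Lie group arises by differentiating conjugation first to ${\rm Ad}$ and then to ${\rm ad}$. This is already foreshadowed in the introduction of the excerpt.

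First, for each $x\in X$ the left translation $\phi(x)\colon X\to X$, $y\mapsto x\rhd y$, is smooth and fixes $1$ by axiom (3), and is a bijection by axiom (2), with smooth inverse $y\mapsto(x\rhd-)^{-1}(y)$. Hence $\Phi(x):=T_1\phi(x)$ lies in ${\rm Gl}({\mathfrak h})$. Smoothness of $\rhd$ makes $\Phi\colon X\to{\rm Gl}({\mathfrak h})$ a smooth map, and since $\Phi(1)={\rm id}_{\mathfrak h}$, I can differentiate again to obtain a linear map $T_1\Phi\colon{\mathfrak h}\to{\mathfrak g}{\mathfrak l}({\mathfrak h})$. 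I will then define
$${\rm ad}_X:=T_1\Phi(X), \qquad [X,Y]:={\rm ad}_X(Y),$$
which makes assertion (3) true by construction.

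Next, to establish (2), I would differentiate the self-distributivity $x\rhd(y\rhd z)=(x\rhd y)\rhd(x\rhd z)$ in the variable $z$ at $z=1$, using $\phi(y)(1)=\phi(x)(1)=1$, to obtain the operator identity
$$\Phi(x)\circ\Phi(y)\,=\,\Phi(x\rhd y)\circ\Phi(x),$$
equivalently $\Phi(x\rhd y)=\Phi(x)\circ\Phi(y)\circ\Phi(x)^{-1}$. Differentiating this at $y=1$ in a direction $Y\in{\mathfrak h}$ yields
$${\rm ad}_{\Phi(x)(Y)}\,=\,\Phi(x)\circ{\rm ad}_Y\circ\Phi(x)^{-1}.$$
Applying both sides to $\Phi(x)(Z)$ gives $[\Phi(x)(Y),\Phi(x)(Z)]=\Phi(x)([Y,Z])$, so $\Phi(x)$ is an automorphism of the bracket, proving (2).

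Finally, for the Leibniz identity in (1), I would differentiate the identity ${\rm ad}_{\Phi(x)(Y)}=\Phi(x)\circ{\rm ad}_Y\circ\Phi(x)^{-1}$ in $x$ at $x=1$ in a direction $X\in{\mathfrak h}$. By the chain rule the left-hand side yields ${\rm ad}_{[X,Y]}$, while the right-hand side is the derivative of $A\mapsto A\circ{\rm ad}_Y\circ A^{-1}$ at $A={\rm id}$ in direction ${\rm ad}_X$, producing the commutator $[{\rm ad}_X,{\rm ad}_Y]$ in ${\mathfrak g}{\mathfrak l}({\mathfrak h})$. Thus ${\rm ad}_{[X,Y]}=[{\rm ad}_X,{\rm ad}_Y]$, and evaluating on any $Z\in{\mathfrak h}$ gives the left Leibniz identity $[X,[Y,Z]]=[[X,Y],Z]+[Y,[X,Z]]$. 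The main technical care will lie precisely in this final double differentiation: correctly applying the chain rule to the composite $\Phi(x\rhd y)$ (which depends on $x$ both directly and through $x\rhd y$) and using $\Phi(1)={\rm id}$ to collapse the conjugation derivative into a plain commutator.
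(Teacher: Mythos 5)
Your proposal is correct and follows essentially the same route as the paper: define $[X,Y]:=(T_1\Phi)(X)(Y)$ and differentiate the self-distributivity axiom successively (in $z$, then $y$, then $x$) at the distinguished point to obtain first the automorphism property of $\Phi(x)$ and then the left Leibniz identity. Your reformulation of the last two steps via the operator identities $\Phi(x\rhd y)=\Phi(x)\Phi(y)\Phi(x)^{-1}$ and ${\rm ad}_{[X,Y]}=[{\rm ad}_X,{\rm ad}_Y]$ is just an equivalent packaging of the paper's computation.
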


Let us recall its proof for the sake of self-containedness:

\pr We have for all $x\in X$, $\phi(x)(1)=x\rhd 1=1$, thus $\Phi(x):=T_1\phi$ is an endomorphism of 
${\mathfrak h}:=T_1X$. As each $\phi(x)$ is invertible, we have $\Phi(x)\in {\rm Gl}({\mathfrak h})$. Now
the map $\Phi:X\to {\rm Gl}({\mathfrak h})$ satisfies $\Phi(1)=\id$, thus we may differentiate again 
in order to obtain ${\rm ad}:T_1X\to {\mathfrak g}{\mathfrak l}({\mathfrak h})$. Now we set
$$[X,Y]\,:=\,{\rm ad}_X(Y)$$
for all $X,Y\in {\mathfrak h}=T_1X$. In terms of the left translations $\phi(x)$, the rack identity
can be expressed by the equation
$$\phi(x)(\phi(y)(z))\,=\,\phi(\phi(x)(y))(\phi(x)(z)).$$
We differentiate this equation at $1\in X$ first with respect $z$, then with respect to $y$ to obtain
$$\Phi(x)\left([Y,Z]\right)\,=\,[\Phi(x)(Y),\Phi(x)(Z)]$$
for all $x\in X$ and all $Y,Z\in {\mathfrak h}$. This expresses the fact that for each $x\in X$,
$\Phi(x)\in{\rm Aut}(T_1X,[,])$. Finally, we differentiate this last equation at $1$ with respect to $x$ 
to obtain
$$[X,[Y,Z]]\,=\,[[X,Y],Z]+[Y,[X,Z]]$$
for all $X,Y,Z\in {\mathfrak h}$, This shows that ${\mathfrak h}$ is a left Leibniz algebra.\fin

\noindent{\bf Example:} In the special case of a linear Lie rack, we obtain the hemi-semi-direct product
Leibniz algebra ${\mathfrak h}=V\times_{\rm hs}{\mathfrak g}$, where ${\mathfrak g}$ is the 
Lie algebra of the Lie group $G$, endowed with the bracket:
$$[(v,X),(v',X')]\,=\,(X(v'),[X,X']).$$
The $G$-module $V$ is here seen as a ${\mathfrak g}$-module in the usual way.\\

Kinyon's main result in \cite{Kin} is the integration of split Leibniz algebras (i.e. those 
isomorphic to a hemi-semi-direct product Leibniz algebra) into linear Lie racks
and thus into Lie digroups. 

\begin{theo}[Kinyon]   \label{Kinyon}
Let ${\mathfrak h}$ be a split Leibniz algebra. Then there exists a linear Lie digroup with tangent 
Leibniz algebra isomorphic to ${\mathfrak h}$. 
\end{theo}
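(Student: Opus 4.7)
The plan is to reduce the statement to the integration of a Lie algebra together with one of its representations, and then to feed the result into the linear Lie digroup construction of Remark \ref{remark_linear_digroup}.

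First, because $\mathfrak{h}$ is split, we may fix an isomorphism $\mathfrak{h}\cong V\times_{\rm hs}\mathfrak{g}$ for some Lie algebra $\mathfrak{g}$ and some $\mathfrak{g}$-module $V$, i.e. a Lie algebra homomorphism $\rho\colon\mathfrak{g}\to\mathfrak{gl}(V)$. By Lie's Third Theorem, pick a simply connected Lie group $G$ with Lie algebra $\mathfrak{g}$. Since $G$ is simply connected, Lie's Second Theorem lets us integrate $\rho$ uniquely to a smooth representation $\widetilde{\rho}\colon G\to\mathrm{Gl}(V)$, making $V$ into a smooth $G$-module whose differential recovers the original $\mathfrak{g}$-module structure.

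Next, apply the construction of Remark \ref{remark_linear_digroup} to this pair $(G,V)$: set $H:=V\times G$ with the two operations
\[
(u,g)\vdash(v,h)=(\widetilde{\rho}(g)(v),gh),\qquad (u,g)\dashv(v,h)=(u,gh),
\]
a distinguished bar unit $1=(0,e_G)$, and inverses $(u,g)^{-1}=(0,g^{-1})$. All structure maps are smooth because $\widetilde{\rho}$ and the multiplication on $G$ are smooth, so $H$ is a linear Lie digroup. By Proposition \ref{digroup_to_rack}, the formula $x\rhd y:=x\vdash y\dashv x^{-1}$ equips $H$ with a pointed Lie rack structure, which unravels to
\[
(u,g)\rhd(v,h)=(\widetilde{\rho}(g)(v),\,ghg^{-1}),
\]
i.e.\ the linear Lie rack $V\times_{\rm hs}G$.

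It remains to identify its tangent Leibniz algebra. Applying Theorem \ref{Kinyon1} to the Lie rack $V\times_{\rm hs}G$, we may differentiate the two factors of $\rhd$ at the unit $(0,e_G)$ separately. The first factor $\widetilde{\rho}(g)(v)$ differentiates (first in $(v,h)$, then in $(u,g)$) to $(X\cdot v',0)$, while the second factor $ghg^{-1}$ differentiates to $(0,[X,X'])$ just as in the classical Lie group case. Hence the induced bracket on $T_{(0,e_G)}H=V\oplus\mathfrak{g}$ is exactly
\[
[(v,X),(v',X')]=(X\cdot v',[X,X']),
\]
which is the hemi-semi-direct product Leibniz bracket of $V\times_{\rm hs}\mathfrak{g}\cong\mathfrak{h}$. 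This gives the required linear Lie digroup with tangent Leibniz algebra isomorphic to $\mathfrak{h}$.

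The main obstacle is the second step: integrating the infinitesimal representation $\rho$ to a global smooth $G$-action on $V$. This is exactly why we must choose $G$ to be simply connected; otherwise $\rho$ might only integrate locally, and we would obtain a local Lie digroup rather than a global one. Once that point is handled, the rest is a direct assembly of Remark \ref{remark_linear_digroup}, Proposition \ref{digroup_to_rack}, and the differentiation computation from the example following Theorem \ref{Kinyon1}.
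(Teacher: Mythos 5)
Your proof is correct and is essentially the argument the paper intends: the theorem is stated without proof (attributed to \cite{Kin}), but your route --- integrate $\mathfrak{g}$ to a simply connected $G$, integrate the module by Lie's Second Theorem, form the linear Lie digroup of Remark \ref{remark_linear_digroup}, pass to the rack via Proposition \ref{digroup_to_rack}, and differentiate the rack product at the unit --- is exactly the mechanism the paper itself deploys later for the hs-approach, where the proof of the corresponding proposition likewise invokes $1$-connectedness to integrate the $\mathfrak{g}$-module. The only point worth making explicit is that finite-dimensionality of $\mathfrak{g}$ and $V$ is needed both for Lie's Third Theorem and for integrating $\rho$ to $\mathrm{Gl}(V)$, in line with the paper's standing conventions.
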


\begin{rema} \label{remarque_Simon}
In fact, Simon Covez showed in his (unpublished) Master thesis that conversely, in case a 
Leibniz algebra integrates into a Lie digroup, it must be split over some ideal containing the
ideal of squares (more precisely, it is split over the ideal $\ker(T_1i)$ where $i$ is the 
inversion map of the digroup). 
\end{rema} 
  
\subsection{From Lie algebras to Lie racks}

Here we summarize some of the previous results in order to perform the integration of Lie algebras
into Lie racks. Later on, we will generalize these integrations to Leibniz algebras. 

Let ${\mathfrak g}$ be a Lie algebra, and let $G$ denote a Lie group integrating ${\mathfrak g}$. 
Then there are two global Lie racks:
\begin{enumerate}
\item $\rhd:G\times G\to G$ given by $g\rhd h\,=\,ghg^{-1}$,
\item $\rhd:{\mathfrak g}\times{\mathfrak g}\to{\mathfrak g}$ given by $X\rhd Y\,=\,
\exp({\rm ad}_X)(Y)$.
\end{enumerate}
Let us denote these two Lie racks by $R_G$ and $R_{\mathfrak g}$ respectively. 

\begin{prop}
There is a rack morphism $\phi:R_{\mathfrak g}\to R_G$, induced by the geometric exponential
$\phi(X):=\exp(X)$, such that $\phi$ is an isomorphism in some $0$-neighborhood. The racks $R_G$ 
and $R_{\mathfrak g}$ thus define the same local Lie rack integrating ${\mathfrak g}$.  
\end{prop}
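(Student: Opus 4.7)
The plan is to check two things: that $\phi = \exp\: {\mathfrak g} \to G$ intertwines the two rack products, and that it is a local diffeomorphism near $0$.

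The first (and really only) identity to check is
$$\phi(X \rhd Y) \,=\, \phi(X) \rhd \phi(Y) \quad \text{for all } X,Y \in {\mathfrak g},$$
which unfolds to
$$\exp\bigl(\exp({\rm ad}_X)(Y)\bigr) \,=\, \exp(X)\exp(Y)\exp(-X).$$
But this is exactly the content (and in fact the proof) of Lemma \ref{formula_BCH_conjugation}: combining naturality properties (a) and (b) of Proposition \ref{naturality_properties} gives
$$\exp(X)\exp(Y)\exp(-X) \,=\, {\rm conj}_{\exp(X)}(\exp Y) \,=\, \exp({\rm Ad}_{\exp X}(Y)) \,=\, \exp(\exp({\rm ad}_X)(Y)).$$
So $\phi$ is a (globally defined) rack morphism. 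It also sends the distinguished element $0 \in R_{\mathfrak g}$ to $1 \in R_G$, and is obviously smooth.

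For the second step, $d\exp|_0 = {\rm id}_{\mathfrak g}$, so by the inverse function theorem there exists an open $0$-neighborhood $V \subseteq {\mathfrak g}$ on which $\exp\: V \to \exp(V)$ is a diffeomorphism onto an open neighborhood of $1 \in G$. Restricting the rack morphism $\phi$ to $V$ therefore gives a smooth bijection $V \to \exp(V)$ with smooth inverse, which transports the rack product of $R_{\mathfrak g}$ to the rack product of $R_G$ wherever both are defined on these opens. Concretely, choosing $U \subseteq V$ small enough that $U \rhd U \subseteq V$ (possible since $\rhd$ is continuous and $0 \rhd 0 = 0$), one obtains a local Lie rack isomorphism in the sense of Definition 1.15.

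The last sentence of the proposition, that $R_G$ and $R_{\mathfrak g}$ define the same local Lie rack, then follows simply from the fact that the two local rack structures are identified via the diffeomorphism $\exp\: U \to \exp(U)$. There is no real obstacle here: everything reduces to the identity of Lemma \ref{formula_BCH_conjugation} plus the standard fact that $\exp$ is a local diffeomorphism at $0$; one just has to be a little careful to restrict to a neighborhood where both the rack product and its image under $\phi$ remain in the domain of the chosen local chart.
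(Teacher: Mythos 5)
Your proof is correct and follows essentially the same route as the paper: the rack morphism property is exactly the identity $\exp(\exp({\rm ad}_X)(Y))=\exp(X)\exp(Y)\exp(-X)$ obtained from the naturality properties of Proposition \ref{naturality_properties} (equivalently, Lemma \ref{formula_BCH_conjugation}), and the local isomorphism comes from the existence of a $0$-neighborhood on which $\exp$ is a diffeomorphism, which is precisely condition (2) in the definition of a BCH-neighborhood that the paper invokes. Your extra care in shrinking to a $U$ with $U\rhd U\subseteq V$ only makes explicit what the paper leaves implicit.
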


\pr The fact that $\phi$ is a rack morphism follows from Proposition \ref{naturality_properties}. 
The fact that $\phi$ is an isomorphism in some $0$-neighborhood follows from the existence of 
BCH-neighborhoods, see Definition \ref{BCH_neighborhood}. \fin

Two remarks are in order:

\begin{rema}
The rack $R_{\mathfrak g}$ has been introduced by H. Bass (unpublished to our knowledge, but cited
in \cite{FenRou}). 
\end{rema}

\begin{rema}
For exponential Lie groups, $\phi$ is a global isomorphism. This is the case for example for 
simply connected, nilpotent Lie groups $G$.
\end{rema}

\section{Local integration using abelian extensions}

In this section, we sketch Covez' approach \cite{Cov} to the integration of Leibniz algebras. It is
modeled on the homological proof of Lie's third Theorem which we sketch first. Covez integrates
Leibniz algebras into local Lie racks by associating to each Leibniz algebra an abelian 
extension and then integrating locally the corresponding Leibniz cocycle to a rack cocycle. 

\subsection{Homological proof of Lie's third theorem}

\begin{rem}
Recall the homological proof of Lie's third theorem (cf \cite{Tuy}) using central extensions: 
write a given
finite dimensional real Lie algebra ${\mathfrak g}$ as a central extension 
$$0\to Z({\mathfrak g})\to {\mathfrak g}\to {\mathfrak g}_{\rm ad}\to 0,$$
where $Z({\mathfrak g})$ is the center of ${\mathfrak g}$ and 
${\mathfrak g}_{\rm ad}:={\mathfrak g}\,/\,Z({\mathfrak g})$ is the adjoint Lie 
algebra associated to ${\mathfrak g}$. As the center of ${\mathfrak g}$ is the kernel
of the adjoint representation, ${\mathfrak g}_{\rm ad}$ embeds into ${\mathfrak g}
{\mathfrak l}({\mathfrak g})$, the Lie algebra of endomorphisms of the vector space
${\mathfrak g}$, via the adjoint representation. As ${\mathfrak g}{\mathfrak l}
({\mathfrak g})$ is the Lie algebra of the Lie group ${\rm G}{\rm l}({\mathfrak g})$, 
this subalgebra integrates by Lie's first theorem to a connected Lie subgroup 
$G_{\rm ad}$ (in order to have simply connected groups, one might want to pass to 
the universal cover). Trivially, the vector space $Z({\mathfrak g})=:V$ integrates to itself,
seen now as a trivial module of the Lie group $G_{\rm ad}$. The above central extension
is determined by a Lie algebra $2$-cocycle which may be integrated into a locally smooth group
$2$-cocycle $\gamma$ (thanks to vanishing of the homotopy groups $\pi_1$ and $\pi_2$ of the group; we do not
review how this is done as the integration in the Leibniz case is quite different. We refer to
Neeb's paper \cite{KHN} for the Lie case.), which then 
gives rise to a central extension 
$$0\to V\to V\times_{\gamma}G_{\rm ad}\to G_{\rm ad}\to 1.$$  
This central extension is the Lie group into which the Lie algebra ${\mathfrak g}$
integrates. As a set, $V\times_{\gamma}G_{\rm ad}$ is the direct product. 
The topology and manifold structure on $V\times_{\gamma}G_{\rm ad}$ is given by
Proposition 18, Chapter III.9 (p. 226) in \cite{Bou}:

\begin{theo}  \label{Bourbaki}
Let $G$ be a group, $W\subset G$ be a subset containing the neutral element $1$ and let $W$
be endowed with a manifold structure. Assume that there exists an open neighborhood $Q\subset W$
of $1$ with $Q^{-1}=Q$ and $Q\cdot Q\subset W$ such that
\begin{enumerate}
\item the map $Q\times Q\to W$, $(g,h)\mapsto gh\in W$ is smooth,
\item the map $Q\to Q$, $g \mapsto g^{-1}$ is smooth,
\item $Q$ generates $G$ as a group.
\end{enumerate}
Then there exists a Lie group structure on $G$ such that $Q$ is open. Any other choice of $Q$
satisfying the above conditions leads to the same structure.
\end{theo}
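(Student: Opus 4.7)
The strategy is the classical one for promoting a local group structure to a global one: transport the smooth structure of $W$ across $G$ by left translations. The plan is to build an atlas on $G$ whose charts are the left translates $(gQ, L_g^{-1})$ for $g \in G$, where $L_g(x) := gx$; since $1 \in Q$, the family $\{gQ\}_{g \in G}$ covers $G$. I will declare $U \subset G$ to be open iff $L_g^{-1}(U) \cap Q$ is open in $Q$ for every $g \in G$. Verifying that this makes $G$ into a Lie group with $Q$ open is the content of the theorem, and the hypothesis that $Q$ generates $G$ will be used to propagate local smoothness to arbitrary group elements.

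The main obstacle is to show that the transition maps between overlapping charts are smooth. Given $g, h \in G$ with $gQ \cap hQ \neq \emptyset$, pick $x = gq_1 = hq_2$ in the intersection, so $q_1, q_2 \in Q$. The transition $L_h^{-1} \circ L_g$ sends $q \mapsto h^{-1}gq = q_2 q_1^{-1} q$ near $q_1$. The factoring trick is this: since $q_1^{-1} \in Q$ (using $Q^{-1} = Q$) and multiplication $Q \times Q \to W$ is smooth, the map $q \mapsto q_1^{-1} q$ sends a small enough neighborhood of $q_1$ in $Q$ smoothly into a neighborhood of $1$ contained in $Q$; composing with $q' \mapsto q_2 q'$ (smooth on a neighborhood of $1$ by the same multiplication hypothesis) expresses the transition as the composition of two smooth maps. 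Hausdorffness of the resulting topology can be arranged by shrinking $Q$ if necessary.

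With smooth transitions in hand, the group operations are reduced by the same factoring idea. For multiplication near $(g_0, h_0)$, I read the product in the charts $g_0 Q, h_0 Q, (g_0 h_0)Q$; it becomes $(q_1, q_2) \mapsto (h_0^{-1} q_1 h_0) q_2$. The inner conjugation by $h_0$ is smooth on a sufficiently small neighborhood of $1$: writing $h_0 = p_1 \cdots p_n$ with $p_i \in Q$ (which is possible precisely because $Q$ generates $G$) reduces conjugation by $h_0$ to iterated conjugations by the $p_i$, each smooth in a small enough neighborhood of $1$ by the same factoring used for transitions; the final multiplication by $q_2$ is then smooth by the hypothesis $Q \cdot Q \subset W$. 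Inversion is handled analogously, using smoothness of $Q \to Q$ together with the chain $g_0^{-1} = (p_1 \cdots p_n)^{-1} = p_n^{-1} \cdots p_1^{-1}$. Uniqueness is automatic: any other Lie group structure on $G$ in which $Q$ is open agrees with the constructed one on $Q$ and hence, by left translation, on every $gQ$; since these cover $G$, the two structures coincide.
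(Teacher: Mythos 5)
The paper does not prove this statement: it is quoted verbatim as Proposition~18 of Chapter~III.9 in Bourbaki's \emph{Lie groups and Lie algebras}, so there is no internal proof to compare against. Your argument is the standard transport-of-structure proof (essentially Bourbaki's own), and it is correct in outline: the atlas of left translates $gQ$, the factoring $h^{-1}gq = q_2(q_1^{-1}q)$ through a neighborhood of $1$ for the transitions, and the reduction of multiplication and inversion to conjugations by generators $p_i\in Q$ are exactly the right steps, and each individual factoring is legitimate because $Q$ is open in $W$ and the relevant intermediate products land back in $Q$ by continuity. Two small imprecisions: first, Hausdorffness is not obtained by ``shrinking $Q$'' --- one separates $g\neq h$ with $g^{-1}h\in Q$ using that $W$ is Hausdorff, and $g\neq h$ with $g^{-1}h\notin Q$ using a symmetric neighborhood $V$ of $1$ with $V\cdot V\subset Q$, so that $gV\cap hV=\emptyset$; no shrinking is needed. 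Second, the uniqueness asserted in the statement concerns two admissible choices $Q$, $Q'$ inside $W$ (both carrying the smooth structure induced from $W$), and the correct argument is that the two resulting Lie group structures agree on the common open neighborhood $Q\cap Q'$ of $1$ and hence, by left translation, everywhere; your phrasing in terms of ``any Lie group structure making $Q$ open'' presupposes that such a structure induces on $Q$ the given structure of $W$, which is not part of that hypothesis.
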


The integrated group cocycle is globally a cocycle and thus defines a global group structure
on $G:=V\times_{\gamma}G_{\rm ad}$, therefore the theorem applies to our case to give a Lie 
group structure on $G$. As $W$, one may take an open set where the cocycle is smooth.    
\end{rem}

Starting to transpose this scheme to the framework of Leibniz algebras, by the sequence 
(\ref{the_abelian_extension}), every Leibniz
algebra is an abelien extension in the category of Leibniz algebras, i.e. the 
corresponding $2$-cocycle is a Leibniz cocycle of a Lie algebra by some module.
One can choose many ideals $I$ in a given Leibniz algebra ${\mathfrak h}$ such that
the quotient ${\mathfrak h}/I$ is a Lie algebra. Actually, every ideal $I$ which contains
the ideal (right) generated by the squares $[X,X]$ for all $X\in{\mathfrak h}$ works. 
As mentioned before, for the integration theory, we will work with the left center 
$Z_L({\mathfrak h})$ where the quotient 
${\mathfrak h}/Z_L({\mathfrak h})=:{\mathfrak h}_{\rm Lie}={\rm ad}({\mathfrak h})$ is a 
Lie algebra, and we have the abelian extension (cf the sequence (\ref{the_abelian_extension}))
of Leibniz algebras
\begin{equation} \label{abelian_extension}
0\to Z_L({\mathfrak h})\stackrel{i}{\to} {\mathfrak h} \stackrel{\pi}{\to} 
{\mathfrak h}_{\rm Lie}\to 0.
\end{equation}
As in the theory of Lie algebras, every abelian extension of Leibniz algebras is 
(uniquely up to equivalence of extensions) specified by its cohomology class which
is represented by a Leibniz $2$-cocycle $\omega:{\mathfrak h}_{\rm Lie}\times
{\mathfrak h}_{\rm Lie}\to Z_L({\mathfrak h})$. This cocycle $\omega$ is obtained exactly
as in Lie algebra cohomology, i.e. given a linear section $s:{\mathfrak h}_{\rm Lie}\to
{\mathfrak h}$, $\omega:{\mathfrak h}_{\rm Lie}\times{\mathfrak h}_{\rm Lie}\to Z_L({\mathfrak h})$
is defined for all $X,Y\in{\mathfrak h}_{\rm Lie}$ by
$$\omega(X,Y)\,=\,s([X,Y])-[s(X),s(Y)]\,\,\in\,\,\ker(\pi)=\im(i)\,\cong\,Z_L({\mathfrak h}).$$
Details about this correspondence, for example the independence (up to coboundary) of
the choice of the section, can be found in \cite{Cov}. The fact that $\omega$ is a 
Leibniz $2$-cocycle means for all $X,Y,Z\in{\mathfrak h}_{\rm Lie}$ that
$$X\cdot\omega(Y,Z)-Y\cdot\omega(X,Z)
-\omega([X,Y],Z)+\omega(X,[Y,Z])-\omega(Y,[X,Z])=0.$$
Observe that this is close to the usual Lie algebra cocycle identity, but with two 
modifications: the term $[X_i,X_j]$ takes here the place of $X_j$ and the last element
acts via the right representation. This right representation is zero in our case and we thus 
consider {\it antisymmetric} Leibniz modules. There are also {\it symmetric} Leibniz modules 
where the right module map is the negative of the left module map.

\subsection{The local Lie rack}

In \cite{Cov}, Covez uses the above mentioned ideas to integrate 
(finite dimensional real)
Leibniz algebras into local Lie racks.  
Using the group $As(X)$ for a given rack $X$, we can define the rack modules which we will need.

\begin{defi}
Let $X$ be a rack and $A$ be an abelian group equipped with a left action of the group
$As(X)$. We call $A$ an anti-symmetric homogeneous $X$-module. 
\end{defi}

In general, a rack module is a family of abelian groups and 
there are two operations, 
one from the left, one from the right. The underlying abelian group may then change 
according to an action of $X$ on the indexing set of the family. {\it Homogeneous} means
that the family consists only of one member and {\it antisymmetric} means
that the right action is trivial. It is the fact that the left center is acted on trivially 
from the right that entails that the integration will be in terms of antisymmetric rack 
modules.

Now we can integrate the (finite dimensional) Lie algebra ${\mathfrak h}_{\rm Lie}$
into a connected, simply-connected Lie group $G_0$, integrate the 
${\mathfrak h}_{\rm Lie}$-module $Z_L({\mathfrak h})$ (which is a Lie algebra module !)
into a $G_0$-module $V$ (acting on the same underlying vector space), and the last step 
to perform is the integration of the Leibniz $2$-cocycle $\omega$ into a rack $2$-cocyle
$I^2(\omega)$. This last step works only locally, i.e. the cocycle $I^2(\omega)$ is 
only defined on an open neighborhood of $(1,1)\in G_0\times G_0$. It is not like a locally
smooth group cocycle a global cocycle which is only locally smooth, but it is not even
globally a cocycle. Therefore, we cannot just transpose Theorem \ref{Bourbaki} to the rack case.

The outcome is then the 
local Lie rack $V\times_{I^2(\omega)}G_0$ defined as the abelian extension of racks
$$0\to V \to V\times_{I^2(\omega)}G_0 \to G_0\to 1.$$

\subsection{Integrating the Leibniz cocycle}

We now review the procedure for integrating the Leibniz $2$-cocycle $\omega$ into 
a rack $2$-cocycle $I^2(\omega)$. The main idea here is to integrate the two arguments
separately. Indeed, let us first integrate Leibniz $1$-cocycles into rack $1$-cocycles,
and only on the connected, simply-connected group $G$ corresponding to the Lie algebra
${\mathfrak h}_{\rm Lie}$ (and seen as conjugation rack). Choose
for all group elements $g\in G$ smooth paths $\gamma_g$ from $1$ to $g$. For a given 
Leibniz $1$-cocycle $\omega\in ZL^1({\mathfrak h}_{\rm Lie},{\mathfrak a}^s)$, define 
the equivariant $1$-form $\omega^{\rm eq}$ as the unique differential $1$-form on $G$ such 
that for all $g\in G$ and all tangent vectors $m$ at $g$ 
$$\omega^{\rm eq}(g)(m)\,=\,g\cdot(\omega(T_gL_{g^{-1}}(m))).$$
Observe that we suppose here that the Leibniz module ${\mathfrak a}$ is symmetric, i.e. the 
right module map is the negative of the left module map.       
The integration map $I^1:ZL^1({\mathfrak h}_{\rm Lie},{\mathfrak a}^s)\to ZR^1(G_0,{\mathfrak a}^s)$
is then given by 
$$I^1(\omega)(g)\,:=\,\int_{\gamma_g}\omega^{\rm eq}.$$
It is shown in {\it loc. cit.} that this map does not depend on the choice of $\gamma$, that it 
induces a map in cohomology, and that it is left inverse to the differentiation map which sends
locally smooth rack cocycles to Leibniz cocycles.

The second step is then to use the separation-of-variables-isomorphism 
$$ZL^2({\mathfrak h}_{\rm Lie},{\mathfrak a}^a)\,\cong\,
ZL^1({\mathfrak h}_{\rm Lie},\Hom({\mathfrak h}_{\rm Lie},{\mathfrak a})^s).$$
This isomorphism works for general Leibniz algebras and sends coboundaries to coboundaries.
It sends $2$-cocycles with values in anti-symmetric modules to $1$-cocycles with values in the 
symmetric module $\Hom({\mathfrak h}_{\rm Lie},{\mathfrak a})$. This is indicated by "a" and "s"
in the exponent. Composing with the integration map from the first step, we obtain a map
$$I:ZL^2({\mathfrak h}_{\rm Lie},{\mathfrak a}^a)\to ZR^1(G_0,
\Hom({\mathfrak h}_{\rm Lie},{\mathfrak a})^s).$$

The third step is to define a map from $ZR^1(G_0,\Hom({\mathfrak h}_{\rm Lie},{\mathfrak a})^s)$
to $ZR^2(U,{\mathfrak a}^a)$, where $U$ is an open $1$-neighborhood in $G_0$ such that the 
logarithm $\log$ as an inverse diffeomorphism to the exponential map $\exp$ is defined.
This time, the integration of some $\beta\in ZR^1(G_0,\Hom({\mathfrak h}_{\rm Lie},{\mathfrak a})^s)$
is similarly to the previous integration map given by
$$\int_{\gamma_{g\rhd h}}(\beta(g))^{\rm eq},$$
where obviously $g\rhd h$ for $g,h\in G_0$ means $ghg^{-1}$. 

Putting all steps together, the map $I^2:ZL^2({\mathfrak h}_{\rm Lie},{\mathfrak a}^a)\to 
ZR^2(U,{\mathfrak a}^a)$ is given by
$$I^2(\omega)(g,h)\,=\,\int_{\gamma_{g\rhd h}}(I(\omega)(g))^{\rm eq}.$$
Unfortunately, this does not work in general and we need very specific paths in the group $G_0$
to make this work. The paths take the form 
\begin{equation}   \label{path}
\gamma_g(s)\,=\,\exp(\,s\,\log(g)),
\end{equation}
and this is why we work on a $1$-neighborhood where $\log$ is defined. Covez shows that this 
map $I^2$ is well-defined (using the above exponential paths), that it sends coboundaries to 
coboundaries, and that it is a left inverse of the differentiation map. Furthermore, Covez shows
that in the case of Lie algebra cocycles the result is the image of group cocycle under the map
linking group and rack cohomology of a group. Observe that thanks to this, 
every Lie subalgebra of ${\mathfrak h}$ becomes integrated into a Lie group,
seen as a subrack of this Lie rack.

Observe that for some Lie groups (like for example simply-connected, nilpotent Lie groups), 
the exponential is a global isomorphism and thus for these groups, the integration procedure yields
a global Lie rack.    

\section{Other approaches to the integration of Leibniz algebras}

Here we present two other approaches to the integration of Leibniz algebras. They are closely related 
to work by H. Bass (unpublished), referred to in \cite{FenRou}, and M. Kinyon \cite{Kin}. 

\subsection{Bass' approach to integration}

This approach builds on a remark by H. Bass in the Lie algebra case, referred to in \cite{FenRou},
and is already contained in \cite{Kin} (end of Section $3$), but 
Kinyon believed this integration to be too arbitrary, as it does not necessarily 
yield Lie groups in the case of Lie algebras.

Let ${\mathfrak h}$ be a finite-dimensional real Leibniz algebra. 

\begin{theo}    \label{Bass_integration}
On the vector space ${\mathfrak h}$, there exists a Lie rack structure which is given by   
$$(X,Y)\mapsto \exp({\rm ad}_X)(Y)\,=:\,X\rhd Y$$
for all $X,Y\in{\mathfrak h}$. This global Lie rack structure has the following properties:
\begin{enumerate}
\item In case ${\mathfrak h}$ is a Lie algebra, the corresponding Lie rack structure is locally the 
conjugation rack structure with respect to to a Lie group structure. 
\item The Lie rack structure is globally (!) described by a BCH-formula.  
\end{enumerate}
\end{theo}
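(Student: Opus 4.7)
The plan is to verify the rack axioms directly for the operation $X\rhd Y:=\exp(\mathrm{ad}_X)(Y)$, leveraging the machinery already developed in Sections 1 and 2 of the paper. Since $\mathfrak{h}$ is finite-dimensional, $\exp(\mathrm{ad}_X)$ is defined for every $X\in\mathfrak{h}$, and Lemma \ref{exponential_of_derivation} guarantees that it is an automorphism of $\mathfrak{h}$. Since $\mathrm{ad}_X$ and $\mathrm{ad}_{-X}=-\mathrm{ad}_X$ commute, we have $\exp(\mathrm{ad}_X)^{-1}=\exp(\mathrm{ad}_{-X})$, so in particular $Y\mapsto X\rhd Y$ is a bijection for every fixed $X$. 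The pointed-rack axioms are immediate with distinguished element $1=0$: $0\rhd Y=\exp(0)(Y)=Y$ and $X\rhd 0=\exp(\mathrm{ad}_X)(0)=0$. Smoothness of $\rhd$ on $\mathfrak{h}\times\mathfrak{h}$ is clear because in finite dimensions $\exp(\mathrm{ad}_X)(Y)$ is polynomial in the matrix entries of $\mathrm{ad}_X$ (or at worst a convergent power series).

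The real content is self-distributivity. I would deduce it from Lemma \ref{conjugation_Lemma} applied with $\alpha:=\exp(\mathrm{ad}_X)$, which is an automorphism by the previous paragraph. The lemma gives
$$\exp(\mathrm{ad}_X)\circ\exp(\mathrm{ad}_Y)\circ\exp(\mathrm{ad}_X)^{-1}\,=\,\exp(\mathrm{ad}_{\exp(\mathrm{ad}_X)(Y)})\,=\,\exp(\mathrm{ad}_{X\rhd Y}).$$
Evaluating both sides at $\exp(\mathrm{ad}_X)(Z)$ yields
$$\exp(\mathrm{ad}_X)\bigl(\exp(\mathrm{ad}_Y)(Z)\bigr)\,=\,\exp(\mathrm{ad}_{X\rhd Y})\bigl(\exp(\mathrm{ad}_X)(Z)\bigr),$$
which is exactly $X\rhd(Y\rhd Z)=(X\rhd Y)\rhd(X\rhd Z)$. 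This establishes that $(\mathfrak{h},\rhd,0)$ is a pointed Lie rack.

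For property (1), assume $\mathfrak{h}$ is a Lie algebra. Then Lemma \ref{formula_BCH_conjugation} identifies $\exp(\mathrm{ad}_X)(Y)$ with the BCH-conjugation $\mathrm{conj}_*(X,Y)=\log(\exp(X)\exp(Y)\exp(-X))$, and on any BCH-neighborhood $U$ (Definition \ref{BCH_neighborhood}) the BCH-product is the pullback along $\exp$ of the local Lie group structure on $\exp(U)\subset G$ integrating $\mathfrak{h}$. Hence on $U$ the rack $\rhd$ coincides with the conjugation rack of this local group, proving (1). Property (2) is essentially tautological once Lemma \ref{formula_BCH_conjugation} is in hand: the formula $X\rhd Y=\exp(\mathrm{ad}_X)(Y)$ is defined on all of $\mathfrak{h}\times\mathfrak{h}$, whereas the original BCH expression of $\mathrm{conj}_*$ is only formal/local; so one records that the BCH-conjugation admits this global closed form on the Leibniz algebra $\mathfrak{h}$ (the series $\sum_{k\ge0}\mathrm{ad}_X^k(Y)/k!$ terminates on each generalized eigenspace of $\mathrm{ad}_X$, hence is a genuine global map).

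I do not expect a serious obstacle: all hard work has been packaged into Lemmas \ref{exponential_of_derivation} and \ref{conjugation_Lemma}, whose proofs do not use the Jacobi identity but only the Leibniz identity. The one point worth emphasizing, and where one must resist a tempting shortcut, is that in the general Leibniz setting there is \emph{no} ambient group in which to conjugate, so self-distributivity cannot be read off from group conjugation; it has to be obtained from the infinitesimal intertwining $\alpha\circ\mathrm{ad}_Y\circ\alpha^{-1}=\mathrm{ad}_{\alpha(Y)}$ (valid for any automorphism $\alpha$ of a Leibniz algebra) followed by exponentiation, exactly as in Lemma \ref{conjugation_Lemma}.
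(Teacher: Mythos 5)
Your proof is correct and follows essentially the same route as the paper: the automorphism property comes from Lemma \ref{exponential_of_derivation}, self-distributivity from the conjugation identity of Lemma \ref{conjugation_Lemma} (the paper packages this same identity as the statement that $\{(X,\exp({\rm ad}_X))\}$ sits inside the linear rack ${\mathfrak h}\times_{\rm hs}{\rm Aut}({\mathfrak h})$ and projects onto the first component), and properties (1) and (2) from Lemma \ref{formula_BCH_conjugation} together with the existence of BCH-neighborhoods. The only slip is cosmetic: the series $\sum_{k\ge 0}{\rm ad}_X^k(Y)/k!$ does not terminate on a generalized eigenspace with nonzero eigenvalue; globality simply follows from convergence of the exponential of an endomorphism in finite dimensions.
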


\pr Note that by Lemma \ref{exponential_of_derivation}, $X\mapsto \exp({\rm ad}_X)$ is an automorphism
of ${\mathfrak h}$. The fact that the binary operation
$$(X,Y)\mapsto X\rhd Y=\exp({\rm ad}_X)(Y)$$
is a rack product thus follows from the self-distributivity of the linear rack 
${\mathfrak h}\times_{\rm hs}{\rm Aut}({\mathfrak h})$ by projection onto the first component. 

The BCH-formula which is referrred to in the statement is contained in Lemma 
\ref{formula_BCH_conjugation}, while the local Lie group structure in the case of a Lie algebra
is given by the BCH-product.\fin  

One drawback of this Lie rack structure is that the underlying space is contractible. This 
will be different with the following approach. Another drawback is that in the case of a Lie algebra,
the space is only {\it locally} a Lie group, but not necessarily globally. We will not be able to 
overcome this drawback.

\subsection{hs-approach to integration}

The hs-approach (approach using {\bf h}emi-{\bf s}emi-direct products) can be seen as modeled on 
the proof of Lie's third Theorem using Ado's Theorem. Here we embed Leibniz algebras
as subalgebras of hemi-semi-direct products (taking the place of general linear Lie algebras),
integrate these to linear Lie racks and identify then the subrack associated to the given 
Leibniz algebra.

From the point of view of abelian extensions, a hemi-semi-direct product 
Leibniz algebra is a trivial extension, so integrates without integrating any cocycle. 

Indeed, let a hemi-semi-direct product $V\times_{\rm hs}{\mathfrak g}$ be given. The left center
$$Z_L(V\times_{\rm hs}{\mathfrak g})\,=\,\{(v,X)\in V\oplus{\mathfrak g}\,:\,\forall(v',X')
\in V\oplus{\mathfrak g}\,\,\,[(v,X),(v',X')]=0\}.$$
Recalling that $[(v,X),(v',X')]=(X(v'),[X,X'])$, we thus see that 
$$Z_L(V\times_{\rm hs}{\mathfrak g})=V\oplus\left({\rm Ann}_{\mathfrak g}(V)\cap 
Z_L({\mathfrak g})\right),$$ 
where ${\rm Ann}_{\mathfrak g}(V)=\{X\in{\mathfrak g}\,:\,\forall v\in V\,\,\,X(v)=0\}$. 

Thus when we specify to ${\mathfrak g}={\mathfrak g}{\mathfrak l}({\mathfrak h})$ for some 
Leibniz algebra ${\mathfrak h}$, we will have ${\rm Ann}_{{\mathfrak g}{\mathfrak l}({\mathfrak h})}
({\mathfrak h})=0$, and simply 
$$Z_L({\mathfrak h}\times_{\rm hs}{\mathfrak g}{\mathfrak l}({\mathfrak h}))\,\cong\,{\mathfrak h}.$$
The same conclusion holds obviously for ${\rm der}({\mathfrak h})$ instead of 
${\mathfrak g}{\mathfrak l}({\mathfrak h})$. 

Now in order to compute the cocycle, we have to choose a section $s:{\rm der}({\mathfrak h})\to
{\mathfrak h}\times_{\rm hs}{\rm der}({\mathfrak h})$. But the linear map $X\mapsto(0,X)$ is a 
section, and it is moreover a morphism of Lie and Leibniz algebras. Therefore the cocycle 
which we can compute from $s$ is zero, and by independence of the choice of the section, the 
abelian extension associated to the Leibniz algebra ${\mathfrak h}\times_{\rm hs}
{\rm der}({\mathfrak h})$ is trivial. 

It therefore integrates to a (global) Lie rack ${\mathfrak h}\times_{\rm hs}
{\rm Aut}({\mathfrak h})$. Let us summarize the above discussion in the 
following proposition:

\begin{prop}
\begin{enumerate}
\item Let ${\mathfrak g}$ be a finite dimensional Lie algebra and $V$ be a finite-dimensional
${\mathfrak g}$-module.
Then the hemi-semi-direct product $V\times_{\rm hs}{\mathfrak g}$ integrates into the (global) Lie rack
$V\times_{\rm hs}G$ where $G$ is the connected, 1-connected Lie group associated to ${\mathfrak g}$.
\item Let ${\mathfrak h}$ be a finite-dimensional Leibniz algebra. Then the hemi-semi-direct products
${\mathfrak h}\times_{\rm hs}{\mathfrak g}{\mathfrak l}({\mathfrak h})$ and 
${\mathfrak h}\times_{\rm hs}{\rm der}({\mathfrak h})$ integrate into the (global) Lie racks 
${\mathfrak h}\times_{\rm hs}{\rm Gl}({\mathfrak h})$ and  
${\mathfrak h}\times_{\rm hs}{\rm Aut}({\mathfrak h})$ respectively. 
\end{enumerate}
\end{prop}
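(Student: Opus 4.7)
The whole proposition is essentially an application of Theorem \ref{Kinyon1} (Kinyon's differentiation procedure) to the explicit linear Lie rack structure on $V\times_{\rm hs}G$ defined in the example following Theorem \ref{Kinyon_Weinstein}. So I would organize the argument as follows: first dispose of (1) by checking that the proposed operation is a Lie rack and then differentiating it twice to recover the hemi-semi-direct product Leibniz bracket, and second deduce (2) by specializing (1) to $({\mathfrak g},V)=({\mathfrak g}{\mathfrak l}({\mathfrak h}),{\mathfrak h})$ and $({\rm der}({\mathfrak h}),{\mathfrak h})$.

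For (1), I would start by verifying that the formula $(v,g)\rhd(v',g'):=(g(v'),gg'g^{-1})$ defines a (pointed, smooth) Lie rack on $V\times G$ with unit $(0,1)$. Smoothness is immediate since the structure maps are compositions of the $G$-action on $V$ and of conjugation in $G$. The invertibility of $(v',g')\mapsto(v,g)\rhd(v',g')$ follows from $g\in G$ being invertible both as a linear operator on $V$ and under conjugation. Self-distributivity is a direct two-line computation: both $(v,g)\rhd((v',g')\rhd(v'',g''))$ and $((v,g)\rhd(v',g'))\rhd((v,g)\rhd(v'',g''))$ unfold to $(gg'(v''),\,gg'g''g'^{-1}g^{-1})$.

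Next I would apply Theorem \ref{Kinyon1}. Write $\phi(v,g)(v',g')=(g(v'),gg'g^{-1})$ and compute $\Phi(v,g):=T_{(0,1)}\phi(v,g)$. Differentiating at $(v',g')=(0,1)$ gives $\Phi(v,g)(V',X')=(g(V'),{\rm Ad}_g(X'))$. Differentiating this in turn at $(v,g)=(0,1)$ in the direction $(V,X)\in V\oplus{\mathfrak g}$ yields, using the standard fact $T_1{\rm Ad}={\rm ad}$, the bracket
\[
[(V,X),(V',X')] \;=\; T_{(0,1)}\Phi(V,X)\cdot(V',X') \;=\; (X(V'),\,[X,X']),
\]
which is exactly the hemi-semi-direct product bracket. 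So $V\times_{\rm hs}G$ integrates $V\times_{\rm hs}{\mathfrak g}$.

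For (2), I would simply instantiate (1) twice. Taking ${\mathfrak g}={\mathfrak g}{\mathfrak l}({\mathfrak h})$ with its Lie group $G={\rm Gl}({\mathfrak h})$ and $V={\mathfrak h}$ gives the first claim, while ${\mathfrak g}={\rm der}({\mathfrak h})$ with its Lie group $G={\rm Aut}({\mathfrak h})$ and $V={\mathfrak h}$ gives the second; that ${\rm Aut}({\mathfrak h})$ is indeed a Lie group with Lie algebra ${\rm der}({\mathfrak h})$ is the standard fact that the automorphism group of a finite-dimensional algebra is a closed Lie subgroup of ${\rm Gl}({\mathfrak h})$ cut out by the polynomial equations $\alpha([X,Y])-[\alpha(X),\alpha(Y)]=0$. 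The main (minor) subtlety is that ${\rm Gl}({\mathfrak h})$ and ${\rm Aut}({\mathfrak h})$ are not simply connected, unlike the $G$ in (1); but the construction of $V\times_{\rm hs}G$ and Kinyon's differentiation procedure only use the local group structure near the identity, so this does not affect the identification of the tangent Leibniz algebra. No step here presents a real obstacle; the only point that requires some care is the second differentiation in the Kinyon procedure, where one must keep track of the fact that the two arguments are differentiated separately in a fixed order.
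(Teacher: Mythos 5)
Your approach matches the paper's: the verification that the linear Lie rack $V\times_{\rm hs}G$ differentiates to the hemi-semi-direct product bracket is exactly the content of the example following Theorem \ref{Kinyon1}, and part (2) is obtained by specialization. However, the one step that the paper's (one-line) proof actually addresses is the step you pass over silently: in part (1) the hypothesis only provides a ${\mathfrak g}$-module structure on $V$, so before you can even write the rack formula $(v,g)\rhd(v',g')=(g(v'),gg'g^{-1})$ you must integrate the infinitesimal action to a $G$-action on $V$; this is where the connectedness and $1$-connectedness of $G$ and the finite-dimensionality of $V$ are used, and it should be stated explicitly. For part (2) no such integration is needed because ${\rm Gl}({\mathfrak h})$ and ${\rm Aut}({\mathfrak h})$ already act tautologically on ${\mathfrak h}$, which is why the failure of simple connectedness that you correctly flag is harmless there. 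With that sentence added, your argument is complete and, if anything, more detailed than the paper's.
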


\pr In the first setting, we need that the ${\mathfrak g}$-module $V$ integrates into a $G$-module.
This follows from the 1-connectedness, and it is here that we use finite-dimensionality.\fin         
 
\subsection{Integrating arbitrary Leibniz algebras in the hs-approach}

In this subsection, we show how to integrate a finite-dimensional Leibniz algebra 
${\mathfrak h}$ into a global hemi-semi-direct product linear Lie rack. Modulo
the steps which were performed in the previous subsection, it remains to identify the subrack
$R_{\mathfrak h}\subset{\mathfrak h}\times_{\rm hs}{\rm Aut}({\mathfrak h})$ associated to the 
Leibniz subalgebra $\{(X,{\rm ad}_X)\,:\,X\in{\mathfrak h}\}$
of the hemi-semi-direct product Leibniz algebra 
${\mathfrak h}\times_{\rm hs}{\rm der}({\mathfrak h})$. For this, we use the exponential function.

\begin{prop}
The subrack $R_{\mathfrak h}\subset{\mathfrak h}\times_{\rm hs}{\rm Aut}({\mathfrak h})$ 
is explicitely described as 
$$R_{\mathfrak h}\,=\,\{(X,\exp({\rm ad}_X))\,:\,X\in{\mathfrak h}\}.$$
It is a closed subset of the direct product of the vector space ${\mathfrak h}$ and 
the exponential image $\exp({\rm ad}({\mathfrak h}))$ of the adjoint image of ${\mathfrak h}$
in the Lie group ${\rm Gl}({\mathfrak h})$. It acquires therefore a manifold structure on some
dense open subset.
\end{prop}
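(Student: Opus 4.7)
My plan is to prove the three assertions of the proposition in sequence, using the conjugation lemma (Lemma \ref{conjugation_Lemma}) as the key tool for identifying the subrack, elementary graph arguments for closedness, and a rank-semicontinuity argument for the manifold structure.

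For the identification $R_{\mathfrak h} = \{(X, \exp({\rm ad}_X)) : X \in {\mathfrak h}\}$, I would argue that integrating the subalgebra $\{(X,{\rm ad}_X):X\in{\mathfrak h}\}$ inside the linear Lie rack ${\mathfrak h}\times_{\rm hs}{\rm Aut}({\mathfrak h})$ amounts to applying the exponential on the second component, replacing the inner derivation ${\rm ad}_X$ by the inner automorphism $\exp({\rm ad}_X)$ (which is indeed an automorphism by Lemma \ref{exponential_of_derivation}). To see that the resulting set is actually closed under the rack product, I would compute directly using the linear Lie rack formula $(u,g)\rhd(v,h)=(g(v),ghg^{-1})$:
\[
(X,\exp({\rm ad}_X))\rhd(Y,\exp({\rm ad}_Y))=(\exp({\rm ad}_X)(Y),\,\exp({\rm ad}_X)\circ\exp({\rm ad}_Y)\circ\exp({\rm ad}_X)^{-1}).
\]
By Lemma \ref{conjugation_Lemma}, the second coordinate equals $\exp({\rm ad}_{\exp({\rm ad}_X)(Y)})$, so the output again has the desired form with first coordinate $Z:=\exp({\rm ad}_X)(Y)=X\rhd Y$ in the Bass rack of Theorem \ref{Bass_integration}. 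Thus $X\mapsto(X,\exp({\rm ad}_X))$ is an injective rack morphism from the Bass rack on ${\mathfrak h}$ to ${\mathfrak h}\times_{\rm hs}{\rm Aut}({\mathfrak h})$, and its image is the desired subrack $R_{\mathfrak h}$.

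For the closedness, I would note that $R_{\mathfrak h}$ is precisely the graph of the map $\Phi\colon{\mathfrak h}\to\exp({\rm ad}({\mathfrak h}))$, $X\mapsto\exp({\rm ad}_X)$. Finite-dimensionality of ${\mathfrak h}$ makes $\Phi$ smooth (in particular continuous), and since the target sits inside the Hausdorff space ${\rm Gl}({\mathfrak h})$, the graph is closed in ${\mathfrak h}\times\exp({\rm ad}({\mathfrak h}))$.

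The hard part is the last assertion about the manifold structure, because the ambient set ${\mathfrak h}\times\exp({\rm ad}({\mathfrak h}))$ is itself only a manifold on a dense open subset: the composition $\exp\circ{\rm ad}\colon{\mathfrak h}\to{\rm Gl}({\mathfrak h})$ need be neither injective nor of constant rank, so $\exp({\rm ad}({\mathfrak h}))$ can have self-intersections and singular points. The plan is to apply upper semi-continuity of the rank of the differential of $\exp\circ{\rm ad}$: the set of $X\in{\mathfrak h}$ where this rank attains its maximal value is open and dense, and on this locus $\exp\circ{\rm ad}$ is locally a constant-rank map, whose image is therefore locally a submanifold of ${\rm Gl}({\mathfrak h})$. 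Consequently $\exp({\rm ad}({\mathfrak h}))$ carries a manifold structure on a dense open subset $\mathcal{O}$, and $R_{\mathfrak h}$ inherits one on ${\mathfrak h}\times\mathcal{O}\cap R_{\mathfrak h}$, transported from ${\mathfrak h}$ via the smooth bijection $X\mapsto(X,\exp({\rm ad}_X))$, which is automatically an immersion because its first-component differential is the identity. This yields the claimed manifold structure on a dense open subset of $R_{\mathfrak h}$ compatible with the ambient structure.
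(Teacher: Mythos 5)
Your identification of $R_{\mathfrak h}$ as a subrack is exactly the paper's argument: closure under $\rhd$ is immediate in the first component from the hemi-semi-direct product formula, and in the second component it is the conjugation identity $\alpha\circ\exp({\rm ad}_X)\circ\alpha^{-1}=\exp({\rm ad}_{\alpha(X)})$ of Lemma \ref{conjugation_Lemma} applied to $\alpha=\exp({\rm ad}_X)$; you merely write out the computation the paper leaves implicit, and your closed-graph observation supplies a justification of closedness that the paper only asserts. The one place where you genuinely diverge is the dense open manifold locus. The paper argues that the set where the derivative of the exponential degenerates is a strictly lower-dimensional subvariety (the vanishing locus of an analytic function that is nonzero at the origin), so its complement is open and dense and there $\exp$ restricted to ${\rm ad}({\mathfrak h})$ is an immersion. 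You instead invoke semicontinuity of the rank of $d(\exp\circ{\rm ad})$. Two cautions here. First, the rank is \emph{lower} semicontinuous, and for a general smooth map the locus where the rank attains its global maximum is open but need not be dense; density holds either for the locus where the rank is \emph{locally} constant (which is all the constant-rank theorem needs), or via the analyticity of $\exp\circ{\rm ad}$, which is what the paper's version implicitly exploits. Your argument goes through once restated for the locally-constant-rank locus. Second, both you and the paper pass from ``the image of a small neighborhood is a submanifold'' to ``the image is a manifold near that point'', ignoring possible accumulation of other branches of $\exp({\rm ad}({\mathfrak h}))$; you explicitly name the self-intersection issue but do not resolve it. Since the paper is no more careful on this point, that is a shared imprecision rather than a defect specific to your proof.
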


\pr First we have to show is that the set  
$$R_{\mathfrak h}\,:=\,\{(X,\exp({\rm ad}_X))\,:\,X\in{\mathfrak h}\}$$ 
is a subrack of the hemi-semi-direct product rack ${\mathfrak h}\times_{\rm hs}{\rm Aut}({\mathfrak h})$.
This is clear in the first variable, and follows from the formula
$$\alpha\exp({\rm ad}_X)\alpha^{-1}\,=\,\exp({\rm ad}_{\alpha(X)})$$
for any automorphism $\alpha\in{\rm Aut}({\mathfrak h})$ in the second variable, see Lemma
\ref{conjugation_Lemma}.   

The fact that the exponential image contains a dense open set where it has a manifold structure
follows from the fact that the vanishing of the derivative of the exponential functions 
defines a strictly lower dimensional submanifold.  \fin

We summarize the content of these two subsections in the following theorem:

\begin{theo}  \label{theorem_rack_R_h}
For every (real) Leibniz algebra ${\mathfrak h}$, there exists a rack $R_{\mathfrak h}$
which carries the structure of a Lie rack on some dense open set whose tangent Leibniz algebra
is ${\mathfrak h}$. This Lie rack structure has the following properties:
\begin{enumerate}
\item In case ${\mathfrak h}$ is a Lie algebra, the corresponding Lie rack structure is locally the 
conjugation rack structure with respect to to a Lie group structure. 
\item The Lie rack structure is globally (!) described by a BCH-formula.   
\end{enumerate}
\end{theo}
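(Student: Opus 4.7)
The plan is to repackage results already in hand. For existence, take $R_{\mathfrak h}$ to be the subrack exhibited in the preceding proposition, namely $\{(X, \exp({\rm ad}_X)) : X \in \mathfrak h\} \subset \mathfrak h \times_{\rm hs} {\rm Aut}(\mathfrak h)$. Projection onto the first factor is a rack bijection onto the vector space $\mathfrak h$ equipped with the transported product
$$X \rhd Y \,:=\, \exp({\rm ad}_X)(Y),$$
which is defined and smooth everywhere on $\mathfrak h\times\mathfrak h$ by Lemma \ref{exponential_of_derivation} and finite-dimensionality. The self-distributivity axiom is a direct application of Lemma \ref{conjugation_Lemma}: conjugating $\exp({\rm ad}_Y)$ by the automorphism $\exp({\rm ad}_X)$ yields $\exp({\rm ad}_{\exp({\rm ad}_X)(Y)})$, which applied to $Z$ is precisely the self-distributivity identity, with unit $0 \in \mathfrak h$. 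The dense-open Lie rack claim then comes from the description of $\exp({\rm ad}(\mathfrak h)) \subset {\rm Gl}(\mathfrak h)$ given in the preceding proposition.

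To identify the tangent Leibniz algebra as $\mathfrak h$ with its original bracket, I would invoke Theorem \ref{Kinyon1}. With $\phi(X)(Y) = \exp({\rm ad}_X)(Y)$, differentiating first at $Y = 0$ produces $\Phi(X) = \exp({\rm ad}_X) \in {\rm Gl}(\mathfrak h)$, and differentiating $\Phi$ at $X = 0$ yields $T_0 \Phi = {\rm ad}$. Kinyon's theorem then reads off the Leibniz bracket on $T_0 \mathfrak h = \mathfrak h$ as $[X, Y] = {\rm ad}_X(Y)$, which is the original bracket, as required.

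For property (1), fix a BCH-neighborhood $U$ of $0 \in \mathfrak h$ (Definition \ref{BCH_neighborhood}); on $U$ the BCH series endows $U$ with a local Lie group structure whose conjugation, by Lemma \ref{formula_BCH_conjugation}, coincides with $X \rhd Y = \exp({\rm ad}_X)(Y)$. This exhibits the rack structure on $R_{\mathfrak h}$ as locally the conjugation rack of that local Lie group. Property (2) is then immediate by expanding
$$X \rhd Y \,=\, Y + [X, Y] + \tfrac{1}{2}[X,[X,Y]] + \tfrac{1}{6}[X,[X,[X,Y]]] + \cdots,$$
which converges on all of $\mathfrak h \times \mathfrak h$ since the exponential of an endomorphism of a finite-dimensional vector space is defined globally; this is the asserted global BCH-type formula. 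I do not foresee a serious obstacle in this proof; it is essentially a recollection of the preceding proposition together with Theorem \ref{Kinyon1} and Lemmas \ref{exponential_of_derivation}, \ref{conjugation_Lemma}, and \ref{formula_BCH_conjugation}. The only delicate point, deferred to the preceding proposition, is the description of the smooth manifold structure on the dense open subset of $R_{\mathfrak h}$ where the derivative of $X \mapsto \exp({\rm ad}_X)$ is nondegenerate.
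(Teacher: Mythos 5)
Your proposal is correct and follows essentially the same route as the paper: both identify $R_{\mathfrak h}$ with the Bass rack $X\rhd Y=\exp({\rm ad}_X)(Y)$ via $X\mapsto(X,\exp({\rm ad}_X))$, both derive self-distributivity from Lemma \ref{conjugation_Lemma}, and both obtain property (1) from Lemma \ref{formula_BCH_conjugation} and property (2) from the globally convergent exponential series. The only difference is that you explicitly verify the tangent Leibniz algebra claim by differentiating twice via Theorem \ref{Kinyon1}, a detail the paper leaves implicit in the construction.
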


\pr The first property follows from the fact that locally, $R_{\mathfrak h}$ is isomorphic to the 
rack ${\mathfrak h}$ described in Theorem \ref{Bass_integration}. This can be seen 
by explicitely by constructing 
a rack morphism $\phi:{\mathfrak h}\to R_{\mathfrak h}$ where ${\mathfrak h}$ carries the Bass rack
structure
$$X\rhd Y\,:=\,\exp({\rm ad}_X)(Y)$$
for all $X,Y\in{\mathfrak h}$. The map $\phi$ is then defined by
$$\phi(X)\,:=\,(X,\exp({\rm ad}_X)).$$
$\phi$ is a rack morphism because 
$$\exp({\rm ad}_{\exp({\rm ad}_X)(Y)})\,=\,\exp({\rm ad}_X)\exp({\rm ad}_Y)\exp(-{\rm ad}_X),$$
which follows easily from Lemma \ref{conjugation_Lemma}. $\phi$ is thus an isomorphism. 

The explicit BCH-description of the rack product is
$$(X,\exp({\rm ad}_X))\rhd(Y,\exp({\rm ad}_Y))\,=\,(\sum_{k=0}^{\infty}\frac{1}{k!}({\rm ad}_X)^k(Y),
\exp({\rm ad}_X)\exp({\rm ad}_Y)\exp(-{\rm ad}_X)).$$
This shows that the rack product is completely described in terms of the Leibniz bracket of
${\mathfrak h}$. Also without using the isomorphism $\phi$, the first property follows from Lemma 
\ref{formula_BCH_conjugation}.\fin 

\begin{rema}
The Lie racks $R_{\mathfrak h}$ and ${\mathfrak h}$ do not come in general 
from a digroup (i.e. according to Proposition \ref{digroup_to_rack}). 
It is instructive to try axiom 4 of a digroup: it does not work for ${\mathfrak h}$, but it does
works for $R_{\mathfrak h}$, because of the second component.  

On the other hand $R_{\mathfrak h}$ does not come from a digroup, because the digroup operations
make the second component different from the first. 

Observe however that Kinyon's Theorem \ref{Kinyon} states that split Leibniz algebras may be 
integrated into Lie digroups. In fact by Remark \ref{remarque_Simon}, in case a Leibniz algebra 
integrates into a Lie digroup, it is necessarily split.    
\end{rema}

\subsection{Some properties of the global Lie rack}

In the previous section, we described the rack product of the Lie rack $R_{\mathfrak h}$ 
using only the Leibniz bracket of ${\mathfrak h}$ in the spirit of the local 
description of the group product of a Lie group in terms of the Lie bracket via the BCH formula.

We now use this result to obtain a local version of Lie's Second Theorem for Lie racks of the form
$R_{\mathfrak h}$, i.e. for the diagonal Lie subracks of 
${\mathfrak h}\times_{\rm hs}{\rm Aut}({\mathfrak h})$ which were introduced earlier.
For this, note that Lie racks $R_{\mathfrak h}$ of this type have an exponential map 
$\exp:{\mathfrak h}\to R_{\mathfrak h}$
given by $X\mapsto(X,\exp({\rm ad}_X))$.

\begin{prop}  \label{Lies_second}
Let $R_1=R_{{\mathfrak h}_1}$ and $R_2=R_{{\mathfrak h}_2}$ be Lie racks of the form
$R_{\mathfrak h}$ with Leibniz algebras ${\mathfrak h}_1$ and ${\mathfrak h}_2$
respectively. Let $\alpha:{\mathfrak h}_1\to{\mathfrak h}_2$ be a morphism of Leibniz algebras.
Then there exists a unique morphism of Lie racks $\phi:R_1\to R_2$ such that
$$\phi\circ\exp(X)\,=\,\exp\circ\,\alpha(X)\,\,\,\,{\rm for}\,\,{\rm all}\,X\in {\mathfrak h}_1.$$
\end{prop}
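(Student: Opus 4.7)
The plan is to define $\phi$ directly by the formula $\phi \circ \exp = \exp \circ\, \alpha$, exploiting the fact that every point of $R_{\mathfrak h}$ is (uniquely) of the form $(X, \exp(\mathrm{ad}_X))$ for some $X \in \mathfrak h$. Concretely, I will set
\[
\phi(X, \exp(\mathrm{ad}_X)) \,:=\, (\alpha(X), \exp(\mathrm{ad}_{\alpha(X)})).
\]
Uniqueness of $\phi$ is immediate: the exponential $\exp \colon \mathfrak h_1 \to R_1$ is surjective (it is in fact a bijection, since the first coordinate determines the second on $R_{\mathfrak h}$), so the prescribed identity on the image of $\exp$ determines $\phi$ on all of $R_1$. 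Smoothness of $\phi$ is also clear from the same formula, since $\alpha$ is linear and $\mathrm{ad}$ and $\exp$ depend smoothly on their argument (we are in finite dimension, so $\exp$ is a polynomial in the matrix entries).

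The content of the proof is therefore to verify that $\phi$ is a rack morphism. First I will establish the crucial naturality lemma that for any Leibniz algebra morphism $\alpha \colon \mathfrak h_1 \to \mathfrak h_2$ and any $X \in \mathfrak h_1$,
\[
\alpha \circ \exp(\mathrm{ad}_X) \,=\, \exp(\mathrm{ad}_{\alpha(X)}) \circ \alpha.
\]
The infinitesimal version $\alpha \circ \mathrm{ad}_X = \mathrm{ad}_{\alpha(X)} \circ \alpha$ is exactly the fact that $\alpha$ intertwines brackets, and iterating this identity (together with the continuity argument from the proof of Lemma \ref{conjugation_Lemma}, which works because $\mathfrak h_1$ is finite dimensional) upgrades it to the exponential.

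Using this, I verify the rack morphism condition on a pair of points $(X, \exp(\mathrm{ad}_X))$ and $(Y, \exp(\mathrm{ad}_Y))$. On the one hand, the product in $R_1$ together with Lemma \ref{conjugation_Lemma} gives
\[
(X, \exp(\mathrm{ad}_X)) \rhd (Y, \exp(\mathrm{ad}_Y)) \,=\, \bigl(\exp(\mathrm{ad}_X)(Y),\, \exp(\mathrm{ad}_{\exp(\mathrm{ad}_X)(Y)})\bigr),
\]
so applying $\phi$ sends this to the point of $R_2$ with first coordinate $\alpha(\exp(\mathrm{ad}_X)(Y))$. On the other hand the corresponding product in $R_2$ has first coordinate $\exp(\mathrm{ad}_{\alpha(X)})(\alpha(Y))$. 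These agree by the naturality lemma, and since the first coordinate determines a point of $R_{\mathfrak h}$, this finishes the rack morphism check.

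The only place where there could be friction is the second (automorphism) coordinate of $R_2$, but once the first coordinates match, the second coordinates match automatically because both are $\exp(\mathrm{ad}_{-})$ applied to the common first coordinate. So there is no real obstacle; the main point is just the intertwining identity above, which is a direct consequence of $\alpha$ preserving the Leibniz bracket.
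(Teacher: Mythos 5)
Your proposal is correct and follows essentially the same route as the paper: you define $\phi$ by $\exp\circ\,\alpha\circ\log$ (written out in coordinates on $R_{\mathfrak h}$) and reduce the rack-morphism check to the intertwining identity $\alpha\circ\exp({\rm ad}_X)=\exp({\rm ad}_{\alpha(X)})\circ\alpha$, proved by induction from $\alpha([X,Y])=[\alpha(X),\alpha(Y)]$ together with Lemma \ref{conjugation_Lemma} for the second coordinate, which is exactly the paper's argument. Your write-up is in fact slightly more complete, since you also address uniqueness and smoothness explicitly, which the paper leaves implicit.
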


\pr Consider the exponential map $\exp:{\mathfrak h}_i\to R_i$
given by $X\mapsto(X,\exp({\rm ad}_X))$ for $i=1,2$. Put
$$\phi:=\exp\circ\,\alpha\circ\log:R_1=\exp({\mathfrak h}_1)\to R_2.$$
It is enough to show that this map $\phi$ is a morphism of racks. 

As $\alpha$ is a morphism of Leibniz algebras, we obtain by induction
$$\alpha(X\rhd_*Y)\,=\,\alpha(X)\rhd_*\alpha(Y),$$
where the rack product $\rhd_*$ is the Bass product $X\rhd_*Y=\exp({\rm ad}_X)(Y)$. 

Now recall from the proof of Theorem
\ref{theorem_rack_R_h} that the exponential map sends the Bass product to the product in $R_{\mathfrak h}$. 
This implies directly the relation:
$$\exp(\alpha(X))\rhd\exp(\alpha(Y))\,=\,\exp(\alpha(X\rhd_* Y)).$$
Writing this relation in terms of the rack elements $(X,\exp({\rm ad}_X))$ and $(Y,\exp({\rm ad}_Y))$
using that $\log(X,\exp({\rm ad}_X))=X$ and $\log(Y,\exp({\rm ad}_Y))=Y$, one obtains
$$\phi(X,\exp({\rm ad}_X))\rhd\phi(Y,\exp({\rm ad}_Y))\,=\,\phi
\big( (X,\exp({\rm ad}_X))\rhd(Y,\exp({\rm ad}_Y))\big),$$
by observing that 
$$X\rhd_*Y\,=\,\exp({\rm ad}_X)(Y)\,=\,\log\big( (X,\exp({\rm ad}_X))\rhd(Y,\exp({\rm ad}_Y))\big).$$ 
\fin

\begin{cor}
Two Leibniz algebras ${\mathfrak h}$ and ${\mathfrak h}'$ are isomorphic if and only if their
corresponding Lie racks $R_{\mathfrak h}$ and $R_{{\mathfrak h}'}$ are isomorphic as Lie racks.
\end{cor}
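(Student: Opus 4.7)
The plan is to deduce both directions from results already in the excerpt, with Proposition \ref{Lies_second} handling one direction and Kinyon's differentiation procedure (Theorem \ref{Kinyon1}) handling the other.

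For the forward implication, suppose $\alpha\colon\mathfrak{h}\to\mathfrak{h}'$ is an isomorphism of Leibniz algebras. By Proposition \ref{Lies_second} applied to $\alpha$ and to $\alpha^{-1}$, there exist unique Lie rack morphisms $\phi\colon R_{\mathfrak{h}}\to R_{\mathfrak{h}'}$ and $\psi\colon R_{\mathfrak{h}'}\to R_{\mathfrak{h}}$ such that $\phi\circ\exp=\exp\circ\,\alpha$ and $\psi\circ\exp=\exp\circ\,\alpha^{-1}$. Composing, $\psi\circ\phi\colon R_{\mathfrak{h}}\to R_{\mathfrak{h}}$ satisfies $(\psi\circ\phi)\circ\exp=\exp\circ\,(\alpha^{-1}\circ\alpha)=\exp\circ\,\id_{\mathfrak{h}}$, which is also satisfied by the identity rack morphism of $R_{\mathfrak{h}}$. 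The uniqueness clause in Proposition \ref{Lies_second}, applied to the morphism $\id_{\mathfrak{h}}$ of Leibniz algebras, therefore forces $\psi\circ\phi=\id_{R_{\mathfrak{h}}}$, and symmetrically $\phi\circ\psi=\id_{R_{\mathfrak{h}'}}$, giving the desired isomorphism of Lie racks.

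For the reverse implication, suppose $\phi\colon R_{\mathfrak{h}}\to R_{\mathfrak{h}'}$ is a Lie rack isomorphism. The exponential map $X\mapsto(X,\exp(\mathrm{ad}_X))$ identifies a neighborhood of $0\in\mathfrak{h}$ with a neighborhood of the distinguished point $(0,\id)$ in $R_{\mathfrak{h}}$ (and analogously for $\mathfrak{h}'$), so in particular $T_1 R_{\mathfrak{h}}\cong\mathfrak{h}$ and $T_1 R_{\mathfrak{h}'}\cong\mathfrak{h}'$. Define $\alpha:=T_1\phi\colon\mathfrak{h}\to\mathfrak{h}'$; this is a linear isomorphism because $\phi$ is a rack isomorphism sending the distinguished point to the distinguished point. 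To see that $\alpha$ preserves the Leibniz bracket, apply the very same double-differentiation argument that gives Theorem \ref{Kinyon1}: the rack morphism equation $\phi(x\rhd y)=\phi(x)\rhd\phi(y)$ differentiated first in $y$ and then in $x$ at the distinguished point yields $\alpha([X,Y])=[\alpha(X),\alpha(Y)]$ for all $X,Y\in\mathfrak{h}$. Thus $\alpha$ is an isomorphism of Leibniz algebras.

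The only mildly delicate point is verifying in the reverse direction that a \emph{Lie} rack isomorphism has enough smoothness near the distinguished point to be differentiated twice; this is fine because $R_{\mathfrak{h}}$ carries a manifold structure on a dense open set containing $(0,\id)$ (as noted after Theorem \ref{theorem_rack_R_h}) and the exponential parametrization shows smoothness near the unit. Everything else is a direct invocation of the uniqueness in Proposition \ref{Lies_second} and the tangent construction of Theorem \ref{Kinyon1}, so no new computation is needed.
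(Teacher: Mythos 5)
Your proof is correct and follows the route the paper intends: the corollary is stated without proof as an immediate consequence of Proposition \ref{Lies_second}, whose uniqueness clause gives exactly your functoriality argument for the forward direction, while the converse is the standard differentiation of a pointed Lie rack isomorphism at $1$ via Theorem \ref{Kinyon1} together with the identification $T_1R_{\mathfrak h}\cong{\mathfrak h}$ from Theorem \ref{theorem_rack_R_h}. Your added remark about smoothness near the unit (the derivative of $\exp$ is invertible at $0$, so the distinguished point lies in the manifold locus of $R_{\mathfrak h}$) is a worthwhile precision that the paper leaves implicit.
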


\begin{rema}  \label{warning}
There is a warning in order here. The above corollary unfortunately does not necessarily apply to 
Covez' local Lie rack. We do not know whether it is locally isomorphic to our BCH-Lie rack
(but we belive strongly that it is). 

In principle, there can be different local integrations of Leibniz algebras, which 
all yield conjugation racks with respect to Lie groups in the special case of Lie algebras, but whose rack
$2$-cocycles are non cohomologuous. At the moment, we do not have an example for this instance. 
\end{rema}

\subsection{Exploring the cocycle associated to the global Lie rack}

In this subsection, we will write out explicitely the cocycle associated to the Lie rack 
$R_{\mathfrak h}$, when one choses a section of the corresponding abelian extension.

Let ${\mathfrak h}$ be a finite dimensional Leibniz algebra. 
Recall the exact sequence \ref{abelian_extension} which describes 
${\mathfrak h}$ as an abelian extension:        

$$0\to Z_L({\mathfrak h})\stackrel{i}{\to} {\mathfrak h} \stackrel{\pi}{\to} 
{\mathfrak h}_{\rm Lie}\to 0.$$

We will write $Z_L({\mathfrak h})\times_{\omega}{\mathfrak h}_{\rm Lie}$ for the Leibniz algebra
${\mathfrak h}$ when regarded as an abelian extension in this way by means of the cocycle $\omega$. 
Denote by ${\mathfrak h}\times_{{\mathfrak h}}{\rm der}({\mathfrak h})$ the diagonal subspace
of the hemi-semi-direct product
${\mathfrak h}\times_{\rm hs}{\rm der}({\mathfrak h})$, i.e. the subspace of $(X,{\rm ad}_X)$
for all $X\in{\mathfrak h}$. The diagonal subspace 
${\mathfrak h}\times_{{\mathfrak h}}{\rm der}({\mathfrak h})$ is clearly
a Leibniz subalgebra of the hemi-semi-direct product. 

\begin{prop} 
There is an isomorphism of Leibniz algebras:
$$\phi:Z_L({\mathfrak h})\times_{\omega}{\mathfrak h}_{\rm Lie}\,\cong\,{\mathfrak h}\times_{{\mathfrak h}}
{\rm der}({\mathfrak h}),$$
given by
$$(a,X)\mapsto(a+s(X),{\rm ad}_X),$$
where $s:{\mathfrak h}_{\rm Lie}\to{\mathfrak h}$ is the linear section which corresponds to the cocycle 
$\omega$. 
\end{prop}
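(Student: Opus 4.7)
The plan is to check in turn that $\phi$ is well-defined, bijective, and a Leibniz-algebra morphism; only the last requires a genuine computation. For well-definedness, I would note that $a\in Z_L({\mathfrak h})$ forces ${\rm ad}_a=0$, so ${\rm ad}_{a+s(X)}={\rm ad}_{s(X)}$. Since the abelian extension under consideration is (\ref{the_abelian_extension}), in which the quotient map is ${\rm ad}$ itself, an element $X\in{\mathfrak h}_{\rm Lie}={\rm ad}({\mathfrak h})$ is by construction the inner derivation ${\rm ad}_{s(X)}$, so $\phi(a,X)=(a+s(X),{\rm ad}_{a+s(X)})$ lies in the diagonal ${\mathfrak h}\times_{{\mathfrak h}}{\rm der}({\mathfrak h})$. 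Linearity is obvious, and bijectivity follows from the direct-sum decomposition ${\mathfrak h}=Z_L({\mathfrak h})\oplus s({\mathfrak h}_{\rm Lie})$ induced by the section $s$; an explicit inverse is $(z,{\rm ad}_z)\mapsto(z-s(\pi(z)),\pi(z))$.

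For the bracket, I would expand $[a+s(X),\,b+s(Y)]$ inside ${\mathfrak h}$. The summands $[a,b]$ and $[a,s(Y)]$ vanish because $a\in Z_L({\mathfrak h})$; $[s(X),b]$ again lies in $Z_L({\mathfrak h})$ since that is a two-sided ideal; and $[s(X),s(Y)]=s([X,Y])-\omega(X,Y)$ by definition of $\omega$. Decomposing the result along $Z_L({\mathfrak h})\oplus s({\mathfrak h}_{\rm Lie})$ gives the extension bracket
$$[(a,X),(b,Y)]\,=\,([s(X),b]-\omega(X,Y),\,[X,Y]).$$
Applying $\phi$ and recombining via $s([X,Y])-\omega(X,Y)=[s(X),s(Y)]$ in the first slot, while noting $[{\rm ad}_{s(X)},{\rm ad}_{s(Y)}]={\rm ad}_{[s(X),s(Y)]}={\rm ad}_{s([X,Y])}$ in the second (since ${\rm ad}_{\omega(X,Y)}=0$), one obtains $([s(X),b+s(Y)],\,[{\rm ad}_{s(X)},{\rm ad}_{s(Y)}])$, which is exactly $[\phi(a,X),\phi(b,Y)]$ computed via the hemi-semi-direct bracket on ${\mathfrak h}\times_{\rm hs}{\rm der}({\mathfrak h})$.

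The only real obstacle is bookkeeping: one must keep the sign convention for $\omega$ and the identification ${\mathfrak h}_{\rm Lie}\cong{\rm ad}({\mathfrak h})$ coherent throughout. Once these are fixed, the verification reduces to a single application of the defining cocycle relation combined with the vanishing of ${\rm ad}$ on $Z_L({\mathfrak h})$.
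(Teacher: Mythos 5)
Your proof is correct and follows essentially the same route as the paper: well-definedness from ${\rm ad}_a=0$ for $a\in Z_L({\mathfrak h})$, the bracket comparison via the defining cocycle relation together with $[s(X),b]=X\cdot b$, and bijectivity from the splitting ${\mathfrak h}=Z_L({\mathfrak h})\oplus s({\mathfrak h}_{\rm Lie})$. The only divergence is the sign of $\omega$ in the extension bracket, where your version is the one consistent with the paper's displayed definition $\omega(X,Y)=s([X,Y])-[s(X),s(Y)]$, whereas the paper's proof uses the opposite convention; this is a harmless bookkeeping choice that does not affect the conclusion.
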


\pr The map $\phi$ is defined by
$$(a,X)\mapsto(a+s(X),{\rm ad}_X).$$
Observe first of all that $\phi(a,X)=(a+s(X),{\rm ad}_X)=(a+s(X),{\rm ad}_{a+s(X)})$, because 
elements from $Z_L({\mathfrak h})$ act trivially on ${\mathfrak h}$. Thus $\phi$ is well-defined. 

Moreover, $\phi$
is a morphism of Leibniz algebras. Indeed, the bracket in 
the abelian extension with cocycle $\omega$ gives:
$$[(a,X),(b,Y)]\,=\,(X\cdot b+\omega(X,Y),[X,Y]),$$
which is mapped to $(X\cdot b+\omega(X,Y)+s([X,Y]),{\rm ad}_{[X,Y]})$ via $\phi$. On the other hand,
the bracket in the hemi-semi-direct product reads
$$[(a+s(X),{\rm ad}_X),(b+s(Y),{\rm ad}_Y)]\,=\,({\rm ad}_X(b+s(Y)),[{\rm ad}_X,{\rm ad}_Y]),$$
and this is equal to what we had before using ${\rm ad}_X(b)=X\cdot b$, ${\rm ad}_X(s(Y))=[s(X),s(Y)]$
(because the difference $X-s(X)$ is left central), and $\omega(X,Y)+s([X,Y])=[s(X),s(Y)]$ by definition.
But it is clear that the morphism $\phi$ is an isomorphism. 
\fin

Now we want to present the Lie rack $R_{\mathfrak h}$ in the same spirit as an abelian extension.
For details about abelian extension of racks, see e.g. \cite{Cov}.  

Every Leibniz algebra ${\mathfrak h}$ gives rise to a Lie rack $R_{\mathfrak h}$, and furthermore
to an abelian extension of racks:
$$0\to Z_L({\mathfrak h})\stackrel{I}{\to} R_{\mathfrak h}\stackrel{P}{\to}
{\mathfrak h}_{\rm Lie}\times_{{\mathfrak h}_{\rm Lie}} \exp({\rm ad}_{{\mathfrak h}_{\rm Lie}})\to 1.$$
Here the Leibniz algebra 
${\mathfrak h}_{\rm Lie}\times_{{\mathfrak h}_{\rm Lie}} \exp({\rm ad}_{{\mathfrak h}_{\rm Lie}})$
is regarded as a Lie rack by means of the Bass rack structure, and the same holds for 
$Z_L({\mathfrak h})$, which renders it a trivial subrack of $R_{\mathfrak h}$.  
The maps $I$ and $P$ are defined by $I(a)=(a,\id)$ and 
$$P(X,\exp{\rm ad}_X)=(\pi(X),\exp{\rm ad}_{\pi(X)}).$$ 
It is easy to compute that $P$ is a morphism of racks, using that $\pi$ is a continuous linear map
(between finite dimensional vector spaces).  

Next, we need a section of $P$, i.e. a map $S:{\mathfrak h}_{\rm Lie}\times_{{\mathfrak h}_{\rm Lie}} 
\exp({\rm ad}_{{\mathfrak h}_{\rm Lie}})\to R_{\mathfrak h}$ which is right inverse to $P$. 
Using the section $s$ of the map $\pi$, $S$ can be defined as
$$S(X,\exp{\rm ad}_X)\,:=\,(s(X),\exp{\rm ad}_{s(X)}).$$
The corresponding rack $2$-cocycle $f$ is then defined for all 
$x,y\in {\mathfrak h}_{\rm Lie}\times_{{\mathfrak h}_{\rm Lie}}\exp({\rm ad}_{{\mathfrak h}_{\rm Lie}})$ by:
$$f(x,y)\,:=\,S(x)\rhd S(y)-S(x\rhd y).$$
One easily computes that this gives the following expression in our situation: 
$$f(X,Y)\,=\,\exp{\rm ad}_{s(X)}(s(Y))- s\left(\exp{\rm ad}_{X}(Y)\right),$$
where we wrote simply $X\in {\mathfrak h}_{\rm Lie}$ for 
$$(X,\exp{\rm ad}_X)\in {\mathfrak h}_{\rm Lie}\times_{{\mathfrak h}_{\rm Lie}} 
\exp({\rm ad}_{{\mathfrak h}_{\rm Lie}})$$ 
and similarly for $Y$. As usual for abelian extensions, we displayed by abuse of notation only
the $Z_L({\mathfrak h})$-component of $f(x,y)$ - the other component is trivial stemming from
the fact that $P$ is a morphism of racks.     

Using the formula of Lemma \ref{formula_BCH_conjugation}, we obtain from here the expression:
$$f(X,Y)\,=\,{\rm conj}_*(s(X),s(Y))- s\left({\rm conj}_*(X,Y)\right),$$
thus the cocycle $f$ measures the default of $s$ to be compatible with the formal conjugation map. 

We have the following explicit formula for the rack cocycle $f$ in terms of the Leibniz cocycle 
$\omega$ and the section $s$ of the abelian extension of Leibniz algebras:

\begin{lem}
\begin{eqnarray*}
f(X,Y)&=&\omega(X,Y)+\frac{1}{2}\omega(X,[X,Y])+\frac{1}{6}\omega(X,[X,[X,Y]])+\ldots \\
&+& \frac{1}{2}[s(X),\omega(X,Y)]+\frac{1}{6}[s(X),\omega(X,[X,Y])]+\ldots \\
&+& \frac{1}{6}[s(X),[s(X),\omega(X,Y)]]+\ldots \\
&+& \ldots
\end{eqnarray*}
These terms are grouped here according to the number of $s(X)$ acting upon terms in $\omega$.
\end{lem}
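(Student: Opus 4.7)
The starting point is the explicit formula derived immediately before the statement,
$$f(X,Y) = \exp({\rm ad}_{s(X)})(s(Y)) - s\bigl(\exp({\rm ad}_X)(Y)\bigr),$$
and the defining relation of the cocycle $\omega$, which I will rewrite as
$$[s(X),s(Z)] = s([X,Z]) - \omega(X,Z) \qquad \text{for all } X,Z\in{\mathfrak h}_{\rm Lie}.$$
The plan is to expand both exponentials as power series, use this relation to push $\omega$ through the iterated brackets $({\rm ad}_{s(X)})^n(s(Y))$, and then collect the resulting double sum by how many times ${\rm ad}_{s(X)}$ acts on each occurrence of $\omega$.

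The main technical step is an induction on $n\geq 1$ establishing
$$({\rm ad}_{s(X)})^n(s(Y)) = s\bigl(({\rm ad}_X)^n(Y)\bigr) - \sum_{k=0}^{n-1}({\rm ad}_{s(X)})^k\Bigl(\omega\bigl(X,({\rm ad}_X)^{n-1-k}(Y)\bigr)\Bigr).$$
The base case $n=1$ is exactly the rewritten cocycle relation. The inductive step applies ${\rm ad}_{s(X)}$ to both sides, replaces $[s(X),s(({\rm ad}_X)^n(Y))]$ again via the cocycle relation (producing a new $\omega$-term with coefficient $k=0$), and reindexes the old sum by $k\mapsto k+1$ to complete the recursion. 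Here it is essential that $\omega$ takes values in the left center $Z_L({\mathfrak h})$, which is a left ideal (by the lemma on the left center): this is what guarantees that repeated applications of ${\rm ad}_{s(X)}$ to the central correction terms produce no further cocycle corrections, so the inductive expression stays clean.

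With this formula in hand, dividing by $n!$ and summing over $n\geq 1$ gives
$$f(X,Y) = -\sum_{n=1}^{\infty}\frac{1}{n!}\sum_{k=0}^{n-1}({\rm ad}_{s(X)})^k\Bigl(\omega\bigl(X,({\rm ad}_X)^{n-1-k}(Y)\bigr)\Bigr),$$
since the $s(({\rm ad}_X)^n(Y))$ terms cancel against the expansion of $s(\exp({\rm ad}_X)(Y))$. Reindexing by $m := n-1-k$ and $k$, so that $n = m+k+1$, yields a coefficient $\tfrac{1}{(m+k+1)!}$ on the term $({\rm ad}_{s(X)})^k\bigl(\omega(X,({\rm ad}_X)^m(Y))\bigr)$. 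Reading off the cases $(k,m) = (0,0),(0,1),(0,2),\dots,(1,0),(1,1),\dots,(2,0),\dots$ reproduces the three lines of the displayed expansion in the lemma (up to the global sign, which is a matter of the convention adopted for $\omega$).

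The only real obstacle is combinatorial bookkeeping, not anything conceptually delicate. The one point requiring care is the induction step: one must use that $\omega(X,\cdot)\in Z_L({\mathfrak h})$ acts trivially on the left, so that when ${\rm ad}_{s(X)}$ hits an expression of the form $({\rm ad}_{s(X)})^k(\omega(X,Z))$ no further $\omega$-corrections appear beyond the one already bookkept. Once this is observed, the proof reduces to the induction above and a standard rearrangement of a double series.
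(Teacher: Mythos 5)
Your proof is correct, and it supplies an argument the paper itself omits: the lemma is stated there without proof, immediately after the identity $f(X,Y)=\exp({\rm ad}_{s(X)})(s(Y))-s\bigl(\exp({\rm ad}_X)(Y)\bigr)$, which is exactly your starting point. Your induction formula for $({\rm ad}_{s(X)})^n(s(Y))$ checks out (the new $\omega$-term produced by rewriting $[s(X),s(({\rm ad}_X)^n(Y))]$ is the $k=0$ summand, and the shift $k\mapsto k+1$ accounts for the rest), and the reindexing $n=m+k+1$ reproduces every displayed coefficient $\tfrac{1}{(m+k+1)!}$ of the lemma, e.g. $(k,m)=(1,1)$ and $(2,0)$ both giving $\tfrac16$. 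Two remarks. First, the global sign you flag is genuinely an artifact of the paper's own inconsistent conventions: Section 2 defines $\omega(X,Y)=s([X,Y])-[s(X),s(Y)]$, which with your computation yields $f(X,Y)=-\omega(X,Y)-\cdots$, whereas the proposition immediately preceding the lemma uses $\omega(X,Y)+s([X,Y])=[s(X),s(Y)]$, i.e.\ the opposite sign, and it is with that latter convention that the displayed expansion holds with plus signs throughout. Second, your appeal to $\omega$ taking values in the left center is not actually needed for the induction to close: the cocycle relation is only ever invoked on brackets of the form $[s(X),s(Z)]$, and the correction terms $({\rm ad}_{s(X)})^k(\omega(\cdot,\cdot))$ are never of that form, so no further corrections could arise regardless; the left-ideal property of $Z_L({\mathfrak h})$ is what guarantees that $f$ lands in $Z_L({\mathfrak h})$, as it must for the abelian extension, rather than what keeps the recursion clean.
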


As already stated in Remark \ref{warning}, we do not know whether this rack $2$-cocycle is cohomologuous
to Covez' rack $2$-cocycle. On the other hand, we believe that this integration formula for cocycles 
is new, even in the special case of Lie algebra $2$-cocycles.

\subsection{Summary: Integration of Leibniz algebras}  \label{summary_section}

Thus in conclusion there are (at least) three integration methods for Leibniz algebras.
Note that in general only the local Lie racks of the last two are isomorphic.

\begin{itemize}
\item The local integration of Covez \cite{Cov} integrating the Leibniz cocycle to a local rack cocycle.
This works only locally and yields local Lie groups in the case of Lie algebras. 
Moreover, the integration 
procedure is compatible with standard maps between the group-, rack-, Leibniz and Lie cohomology spaces.
\item The integration via the conjugation with respect to the BCH formula (Bass' approach), which also locally 
yields Lie groups in the special case of Lie algebras. 
It integrates a Leibniz algebra into a rack structure on the same underlying vector space. 
\item The globalization of this local integration in terms of hemi-semi-direct products. One still has
the interpretation in terms of local groups in the case of Lie algebras and one gains globality
(in the sense that the underlying topological space may be non-contractible).    
\end{itemize}

\section{Deformation quantization of Leibniz algebras}

\subsection{Motivation}

Recall that given a finite-dimensional real Lie algebra $({\mathfrak g},[,])$, 
its dual vector space ${\mathfrak g}^*$ is a smooth manifold
which carries a Poisson bracket on its space of smooth functions, defined for all 
$f,g\in{\mathcal C}^{\infty}({\mathfrak g}^*)$ and all $\xi\in{\mathfrak g}^*$
by the {\it Kostant-Kirillov-Souriau formula}
$$\{f,g\}(\xi)\,:=\,\langle \xi,[df(\xi),dg(\xi)]\rangle.$$
Here $df(\xi)$ and $dg(\xi)$ are linear functionals on ${\mathfrak g}^*$, identified with elements of
${\mathfrak g}$. The goal of the second part of this article is to define 
deformation quantization for an analoguous bracket on the dual of a Leibniz algebra. 

Let $({\mathfrak{h}},[,])$ be a (left, real, finite-dimensional)
Leibniz algebra. Its linear dual ${\mathfrak{h}}^{*}$ is still a smooth manifold. 
The smooth functions ${\mathcal{C}}^{\infty}({\mathfrak{h}}^{*})$
on ${\mathfrak{h}}^{*}$ have a natural bracket
\[
\{\,,\,\}:{\mathcal{C}}^{\infty}({\mathfrak{h}}^{*})\times
{\mathcal{C}}^{\infty}({\mathfrak{h}}^{*})\to
{\mathcal{C}}^{\infty}({\mathfrak{h}}^{*}).
\]
Namely for all $f,g\in{\mathcal{C}}^{\infty}({\mathfrak{h}}^{*})$
and all $\xi\in{\mathfrak{h}}^{*}$ 
\begin{equation}
\{f,g\}(\xi)\,:=\,\langle\xi,[df(0),dg(\xi)]\rangle\label{Poisson_bracket}
\end{equation}
At this stage, it may seem arbitrary that in comparison to the above bracket on the 
dual of a Lie algebra, we evaluated the first variable in $0$. It is an outcome (see Theorem 
\ref{bracket_computation}) of 
our deformation quantization procedure that this is the bracket which we are deforming.  
We will not introduce a different notation for this generalized bracket. We hope it 
will be clear from the context which bracket we will be talking about. 

One readily verifies that this bilinear bracket satisfies the (right) Leibniz rule
for all $f,g,h\in{\mathcal{C}}^{\infty}({\mathfrak{h}}^{*})$:
\begin{equation} \label{*} 
\{f,gh\}\,=\,\{f,g\}h+g\{f,h\}.
\end{equation}

\begin{rem}
Observe that there is a remainder of the left Leibniz rule, too.
Vector fields are derivations on the algebra of functions. Tangent vectors are
{\it pointwise derivations}, i.e. the derivation property holds when interpreted as 
immediately followed by evaluation in a point. In this sense, the 
Leibniz rule in the first variable of $\{-,-\}$ holds when interpreted as 
immediately followed by evaluation in $0$. 
\end{rem}

On the other hand, the bracket does not satisfy anymore the
(left) Leibniz identity $\{f,\{g,h\}\}\,=\,\{\{f,g\},h\}+\{g,\{f,h\}\}$
and it is certainly not necessarily skew-symmetric. 

\begin{rem}
It is natural that the generalized bracket should satisfy much weaker conditions than
a Poisson bracket on a smooth manifold. Indeed, it is shown in \cite{GraMar} that a  
bracket on a commutative associative algebra (in characteristic zero, without zero divisors)
which satisfies the Leibniz rule in both variables and the Leibniz identity is necessarily
skew-symmetric. We thank K. Uchino for bringing this fact to our attention. 
\end{rem} 

We call the bracket in (\ref{Poisson_bracket}) \textit{Leibniz-Poisson}
bracket.
\begin{defi}
A generalized Poisson manifold is a smooth manifold $M$ whose space
of smooth functions ${\mathcal{C}}^{\infty}(M)$ is endowed with a
bilinear bracket 
\[
\{\,,\,\}:{\mathcal{C}}^{\infty}(M)\times{\mathcal{C}}^{\infty}(M)\to{\mathcal{C}}^{\infty}(M)
\]
satisfying the above property (\ref{*}). 
\end{defi}

The notion of star-product \cite{BFFLS} is closely related to the
notion of Poisson manifold.

\begin{defi}
A star-product $*$ on a Poisson manifold $(M,\{,\})$ is a formal
deformation $*_{\epsilon}$ of the commutative associative product
on ${\mathcal{C}}^{\infty}(M)$, i.e. an associative product 
\[
f*_{\epsilon}g\,=\, fg+\epsilon B_{1}(f,g)+\ldots+\epsilon^{n}B_{n}(f,g)+\ldots
\]
such that the $B_{n}(-,-)$'s are bidifferential operators for all
$n\geq1$ and that the constant function $1$ is a unit. 
\end{defi}
Namely, given a star-product $*_{\epsilon}$ on $M$, 
\[
f*_{\epsilon}g\,=\, fg+\epsilon B_{1}(f,g)+\ldots,
\]
the antisymmetrization of the first terms yields a Poisson bracket
on $M$: 
\[
\{f,g\}\,=\, B_{1}(f,g)-B_{1}(g,f).
\]
One says that the star-product $*_{\epsilon}$ \textit{quantizes}
the Poisson bracket in this case.

Conversely, M. Kontsevich \cite{Kon} showed that any Poisson bracket
can be quantized (non uniquely) into a star-product.

On the other hand, the quantization of a Lie algebra ${\mathfrak{g}}$
is known to be (roughly) the data of a $*$-algebra ${\mathcal{A}}_{\mathfrak{g}}$
for which the \textit{self-adjoint element}s
\[
U_{\mathfrak{g}}\,=\,\{a\in{\mathcal{A}}_{\mathfrak{g}}\,|\, a^{*}=a\,\}
\]
form a group isomorphic to the Lie group integrating ${\mathfrak{g}}$.

A model for this quantizing $*$-algebra is the universal enveloping
algebra $U({\mathfrak{g}})$ of the Lie algebra ${\mathfrak{g}}$;
another one is the convolution algebra $C({\mathfrak{g}})$ of continuous
functions on the integrating group. Deformation quantization, by considering
$*$-algebras quantizing the Poisson structure on the dual space ${\mathfrak{g}}^{*}$,
gives yet a third model.

Namely the \textit{Gutt $*$-algebra} $({\mathcal{C}}^{\infty}({\mathfrak{g}}^{*})[[\epsilon]],*_{{\rm Gutt}})$
(see \cite{Gut}) where $\epsilon=\frac{\hbar}{2i}$ quantizes ${\mathfrak{g}}^{*}$
in the sense of deformation quantization and also has the following
properties:
\begin{enumerate}
\item The complex conjugation is the involution of the $*$-algebra 
\[
\overline{f*_{{\rm Gutt}}g}\,=\,\overline{g}*_{{\rm Gutt}}\overline{f}.
\]

\item $U_{\mathfrak{g}}\,=\,\{E_{X}\,|\, X\in{\mathfrak{g}}\}$ where $E_{X}(\xi)=e^{\frac{i}{\hbar}\langle X,\xi\rangle}$
and 
\[
E_{X}*_{{\rm Gutt}}E_{Y}\,=\, E_{BCH(X,Y)}\quad\textrm{and}\quad\overline{E_{X}}=E_{-X}.
\]

\end{enumerate}
Thus $U_{\mathfrak{g}}$ is isomorphic rather to the formal/local
group $({\mathfrak{g}},BCH)$ integrating $({\mathfrak{g}},[\,,\,])$.

In everything that follows, one can always exchange the expansion
parameters $\epsilon$ and $\hbar$ using the formula $\epsilon=\frac{\hbar}{2i}$.

In this section, we aim at quantizing a Leibniz algebra ${\mathfrak{h}}$
using techniques similar to deformation quantization.
As we will see, what we obtain is an operation 
\[
\rhd:{\mathcal{C}}^{\infty}({\mathfrak{h}}^{*})[[\epsilon]]\times{\mathcal{C}}^{\infty}({\mathfrak{h}}^{*})[[\epsilon]]\to{\mathcal{C}}^{\infty}({\mathfrak{h}}^{*})[[\epsilon]]
\]
such that the restriction of $\rhd$ to $U_{\mathfrak{h}}=\{E_{X}\,|\, X\in{\mathfrak{h}}\}$
is a rack structure $\rhd:U_{\mathfrak{h}}\times U_{\mathfrak{h}}\to U_{\mathfrak{h}}$.

Moreover, the restriction of this operation to 
\[
\rhd:U_{\mathfrak{h}}\times{\mathcal{C}}^{\infty}({\mathfrak{h}}^{*})[[\epsilon]]\to{\mathcal{C}}^{\infty}({\mathfrak{h}}^{*})[[\epsilon]]
\]
is a rack action.
\begin{rema}
In the case of a Lie algebra $({\mathfrak{g}},[\,,\,])$, one can
obtain such a quantum rack $\rhd_{{\rm Gutt}}$ from the Gutt star-product;
namely 
\[
f\rhd_{{\rm Gutt}}g\,:=\, f*_{{\rm Gutt}}g*_{{\rm Gutt}}\overline{f},
\]
whose restriction to the exponentials is 
\[
e^{\frac{i}{\hbar}X}\rhd_{{\rm Gutt}}e^{\frac{i}{\hbar}X}\,=\, e^{\frac{i}{\hbar}X}*_{{\rm Gutt}}e^{\frac{i}{\hbar}Y}*_{{\rm Gutt}}e^{-\frac{i}{\hbar}X}\,=\, e^{\frac{i}{\hbar}{\rm conj}_{*}(X,Y)},
\]
where we have used Lemma \ref{formula_BCH_conjugation}. In the same
vein, we obtain 
\[
e^{\frac{i}{\hbar}X}\rhd_{{\rm Gutt}}g\,=\, g+\epsilon B_{X}^{1}(g)+\epsilon^{2}B_{X}^{2}(g)+\cdots,
\]
where the $B_{X}^{n}$ are certain differential operators depending
on $X\in\mathfrak{g}$. The quantum rack we will obtain in the case
of a general Leibniz algebra will not coincide with the one in the
Lie algebra case, but their restrictions on exponentials will. 
\end{rema}
We start by reinterpreting the Gutt star-product quantizing a Lie
algebra $({\mathfrak{g}},[\,,\,])$ as the quantization of the symplectic
micromorphism obtained by the cotangent lift of the group operation
$m:G\times G\to G$ on the integrating Lie group. We will then follow
a similar strategy for Leibniz algebras ${\mathfrak{h}}$ by quantizing
the corresponding micromorphism obtained by the cotangent lift of
the integrating rack structure $\rhd:R_{\mathfrak{h}}\times R_{\mathfrak{h}}\to R_{\mathfrak{h}}$.

\subsection{Gutt star-product as the quantization of a symplectic micromorphism}

Let $({\mathfrak{g}},\,[\,,\,])$ be a Lie algebra with integrating
Lie group $G$. The \textit{cotangent lift} $T^{*}m$ of the group
operation $m:G\times G\to G$ is the Lagrangian submanifold 
\[
T^{*}m\,:=\,\left\{ \left((g,T_{g}^{*}R_{h}\xi),(h,T_{h}^{*}L_{g}\xi),(gh,\xi)\right):\; g,h\in G,\,\xi\in T_{gh}^{*}G\right\} 
\]
of $\overline{T^{*}G}\times\overline{T^{*}G}\times T^{*}G$, where
$R_{h}:G\to G$ and $L_{g}:G\to G$ are the usual right and left translations
on $G$, respectively. The cotangent lift $T^{*}m$ is actually the
graph of the global symplectic groupoid
\[
\xymatrix{T^{*}G\ar@<2pt>[r]^{s}\ar@<-2pt>[r]_{t} & {\mathfrak{g}}^{*}}
\]
integrating the Poisson manifold ${\mathfrak{g}}^{*}$. We refer
the reader to \cite{Can} and \cite{Wei} for more details on the
relationships between integrated Poisson data and Lagrangian submanifolds.

Seeing $T^{*}m$ as a canonical relation from $T^{*}G\otimes T^{*}G$
($\cong T^{*}G\times T^{*}G$) to $T^{*}G$ in the symplectic category,
one wishes to associate to it a Fourier integral operator (depending
on a parameter $\hbar$) from some $L^{2}({\mathfrak{g}}^{*})\otimes L^{2}({\mathfrak{g}}^{*})\to L^{2}({\mathfrak{g}}^{*})$
whose asymptotic expansion in the limit $\hbar\to0$ would yield a
star-product, in the spirit of \cite{WeiII} and \cite{WeiIII} (see
also \cite{Wei} for a more recent exposition). 

This is in general a very hard problem analytically, and it turns
out that one is more lucky by only looking at the germ of
\[
T^{*}m\subset\overline{T^{*}G}\times\overline{T^{*}G}\times T^{*}G
\]
around the graph of the diagonal map $\triangle_{{\mathfrak{g}}^{*}}:{\mathfrak{g}}^{*}\to{\mathfrak{g}}^{*}\times{\mathfrak{g}}^{*}$,
where see see $\mathfrak{g}^{*}$ as an embedded lagrangian submanifold
in $T^{*}G$, namely the fiber over the identity element. Namely,
as shown in \cite{CDWIII}, this germ is a symplectic micromorphism,
which is readily quantizable by Fourier Integral Operators (FIO) (see
\cite{CDWIV}).

\subsubsection*{Symplectic micromorphisms}

Let us recall the definition of a symplectic micromorphism (see \cite{CDWI},
\cite{CDWII}, \cite{CDWIII}, and \cite{CDWIV} for more details)
as well as some aspect of their quantization.
\begin{defi}
A \textit{symplectic micromorphism} $([L],\phi)$ from a \textit{symplectic
microfold} $[M,A]$ (i.e. a germ of a symplectic manifold around a
Lagrangian submanifold $A\subset M$, called the \textit{core} of
the microfold) to a symplectic microfold $[N,B]$ is the data of a
Lagrangian submanifold germ $[L]$ in $\overline{M}\times N$ around
the graph ${\rm gr}(\phi)$ of a smooth map $\phi:A\to B$ such that
the intersection $L\cap(A\times B)={\rm gr}(\phi)$ is clean for a
representative $L\in[L]$.
\end{defi}
The symplectic micromorphisms are the morphisms of a category, \textit{the
microsymplectic category}. We denote them by $([L],\phi):[M,A]\to[N,B]$,
and, when the symplectic microfold is $[T^{*}A,A]$, we simply write
$T^{*}A$. 

An important example of symplectic micromorphisms comes from cotangent
lifts of smooth maps between manifolds. Namely, if $\phi:B\rightarrow A$
is a smooth map, then the conormal bundle $N^{*}\textrm{gr}\phi$
of the graph of $\phi$ is a lagrangian submanifold of $T^{*}(A\times B)$.
Using the identification (Schwartz transform) between this last cotangent
bundle and $\overline{T^{*}A}\times T^{*}B$, the conormal bundle
to the graph yields a symplectic micromorphism, which we denote by
$T^{*}\phi:T^{*}A\rightarrow T^{*}B$, by taking the germ of the resulting
lagrangian submanifold 
\[
\Big\{\Big(\big(p_{A},\,\phi(x_{B})\big),\,\big((T_{x_{B}}^{*}\phi)p_{A},x_{B}\big)\Big):\,(p_{A,}\, x_{B})\in\phi^{*}(T^{*}A)\Big\}
\]
around the graph of $\phi$, and where $(p_{A},x_{A})$ and $(p_{B},x_{B})$
are the canonical coordinates on $T^{*}A$ and $T^{*}B$ respectively. 

When the target and source symplectic microfold cores are \textit{euclidean}
(i.e. when $A=\R^{k}$ and $B=\R^{l}$ for some $k\geq 1$ and $l\geq 1$), a
symplectic micromorphism from $T^{*}A$ to $T^{*}B$ can be associated
with a family of formal Fourier Integral operators from $C^{\infty}(A)[[\hbar]]$
to $C^{\infty}(B)[[\hbar]]$ using the symplectic micromorphism generating
function (see \cite{CDWIV} for a general theory of symplectic micromorphism
quantization). 

Namely, as shown in \cite{CDWII}, when the target and source symplectic
microfold cores are euclidean any symplectic micromorphism $([L],\phi)$
from $T^{*}A$ to $T^{*}B$ can be described by a generating function
germ $[S_{L}]:\phi^{*}(T^{*}A)\rightarrow\R$ around the zero section
of the pullback bundle $\phi^{*}(T^{*}A)$ as follows: There is a
representative $ $$L\in[L]$ such that
\[
\left\{\left(\big(p_{A},\,\frac{\partial S_{L}}{\partial p_{A}}(p_{A},x_{B})\big),\,\big(\frac{\partial S_{L}}{\partial x_{B}}(p_{A},x_{B}),\, x_{B}\big)\right):\,(p_{A,}\, x_{B})\in W\right\},
\]
where $W$ is an appropriate neighborhood of the zero section in $\phi^{*}(T^{*}A)$.
This generating function $S_{L}$ is unique if one requires that it
satisfies the property $S_{L}(0,x)=0$. The geometric condition on
the cleanness of the intersection in the definition above can be expressed
in terms of the generating function as follows:
\begin{equation}
\frac{\partial S_{L}}{\partial p_{A}}(p_{A},0)=\phi(x_{B})\quad\textrm{ and }\quad\frac{\partial S_{L}}{\partial x_{B}}(0,x_{B})=0.\label{eq:gen_funct_cond}
\end{equation}
In this light, one can see $S_{L}$ as a deformation of the cotangent
lift generating function, which is the first term of $S_{L}$ in a
Taylor expansion:
\[
S_{L}(p_{A},x_{B})=\langle p_{A},\phi(x_{B})\rangle+\mathcal{O}(p_{A}^{2}).
\]

\begin{rema}
Conversely, any generating function germ $[S]:\phi^{*}(T^{*}A)\rightarrow\R$
satisfying conditions \eqref{eq:gen_funct_cond} defines uniquely
a symplectic micromorphism $([L_{S}],\phi):T^{*}A\rightarrow T^{*}B$. 
\end{rema}
Now, using the generating function $S_{L}$ of the symplectic micromorphism
$([L],\phi)$ and a function germ $a:\phi^{*}(T^{*}A)\rightarrow\R$
around the zero section, one can construct a formal operator
\begin{eqnarray*}
C^{\infty}(A)[[\hbar]] & \longrightarrow & C^{\infty}(B)[[\hbar]]\\
\psi & \mapsto & Q^{a}([L],\phi)\psi
\end{eqnarray*}
by taking the stationary phase expansion of the following oscillatory
integral:
\[
\int_{T^{*}A}\chi(p_{A},x_{A})\psi(x_{A})a(p_{A},x_{B})e^{\frac{i}{\hbar}(S_{L}(p_{A},x_{B})-p_{A}x_{A})}\frac{dx_{A}dp_{A}}{(2\pi\hbar)^{n}},
\]
where $\chi$ is a cutoff function with compact support around the
critical points of the phase $S_{L}(p_{A},x_{B})-p_{A}x_{A}$ (with respect to
the integration variables) and with value $1$ on this critical locus,
which is nothing but the points in $\{(0,\phi(x_{B})):\, x_{B}\in B\}$.
Since the critical locus is contained in the zero section, the asymptotic
expansion does not depend on the cutoff functions and, hence, is well-defined.
To simplify the notation, we will abuse it slightly, and write from
now on:
\[
(Q^{a}([L],\phi))\psi(x_{B})=\int_{\R^{k}}\hat{\psi}(p_{A})a(p_{A},x_{B})e^{\frac{i}{\hbar}S_{L}(p_{A},x_{B})}\frac{dp_{A}}{(2\pi\hbar)^{k/2}},
\]
to mean the asymptotic expansion above, and where $\hat{\psi}(p_{B})$
is the asymptotic Fourier transform of $\psi$; namely,
\[
\hat{\psi}(p_{A})=\int_{\R^{k}}\psi(x_{A})e^{-\frac{i}{\hbar}p_{A}x_{A}}\frac{dx_{A}}{(2\pi\hbar)^{k/2}}.
\]

\subsubsection*{Back to the Gutt star-product}

Let us now apply the previous section result to the quantization of
the linear Poisson structure on the dual of a Lie algebra $\mathfrak{g}$.
Consider first the integrating Lie group $G$. Taking the cotangent
lift of the group operation $m:G\times G\to G$ yields a symplectic
micromorphism 
\[
([T^{*}m],\triangle_{{\mathfrak{g}}^{*}}):[T^{*}G,{\mathfrak{g}}^{*}]\otimes[T^{*}G,{\mathfrak{g}}^{*}]\to[T^{*}G,{\mathfrak{g}}^{*}],
\]
where we take the core in the source and target symplectic microfolds
to be not the cotangent bundle zero section $G$, but rather the fiber
above the identity, i.e. the dual of the Lie algebra. Identifying
$[T^{*}G,{\mathfrak{g}}^{*}]$ with $[T^{*}{\mathfrak{g}}^{*},{\mathfrak{g}}^{*}]$
(which we will denote simply by $T^{*}\mathfrak{g}^{*}$) using the
Lagrangian embedding germ 
\[
[T^{*}{\mathfrak{g}}^{*},{\mathfrak{g}}^{*}]\to[T^{*}G,{\mathfrak{g}}^{*}],\,\,\,\,\,(X,\xi)\mapsto(\exp(X),(T_{1}^{*}L_{\exp(X)})^{-1}\xi),
\]
the Lagrangian germ $[T^{*}m]$ becomes the cotangent lift of the
local group operation $BCH:\mathfrak{g}\times\mathfrak{g}\rightarrow\mathfrak{g}$,
and $([T^{*}m],\,\Delta_{\mathfrak{g}^{*}})$ becomes a symplectic
micromorphism from $T^{*}{\mathfrak{g}}^{*}\otimes T^{*}{\mathfrak{g}}^{*}$
to $T^{*}{\mathfrak{g}}^{*}$, whose underlying Lagrangian submanifold
germ coincides with the multiplication of the local symplectic groupoid
integrating the linear Poisson structure on $\mathfrak{g}^{*}$. 

This local/formal symplectic groupoid is described in \cite{CDF},
where it is shown that $T^{*}m$ can be described in term of the following
generating function germ
\[
S(X,Y,\xi)\,=\,\Big\langle\xi,BCH(X,Y)\Big\rangle
\]
as follows:
\[
T^{*}m\,=\,\left\{ \left((X,\,\frac{\partial S}{\partial X}),\,(Y,\,\frac{\partial S}{\partial Y}),\,(\frac{\partial S}{\partial\xi},\,\xi)\right):\,(X,Y,\xi)\in W\right\} 
\]
where $W$ is an appropriate neighborhood of the zero section in $T^{*}\mathfrak{g}^{*}\oplus T^{*}\mathfrak{g}^{*}$.

Once the generating function of a symplectic micromorphism is computed,
it is easy to obtain a family of (formal) FIOs quantizing it as explained
in the previous section. In the case at hand, we obtain the following family
of formal operators
\[
Q^{a}(T^{*}m):{\mathcal{C}}^{\infty}({\mathfrak{g}}^{*})[[\epsilon]]\otimes{\mathcal{C}}^{\infty}({\mathfrak{g}}^{*})[[\epsilon]]\to{\mathcal{C}}^{\infty}({\mathfrak{g}}^{*})[[\epsilon]]
\]
of the form (in the previous section notation): 
\begin{equation}
Q^{a}(T^{*}m)(f\otimes g)(\xi)\,=\,\int_{{\mathfrak{g}}\times{\mathfrak{g}}}\widehat{f}(X)\widehat{g}(Y)a(X,Y,\xi)e^{\frac{i}{\hbar}S(X,Y,\xi)}\frac{dXdY}{(2\pi\hbar)^{n}},\label{eq:FIOs}
\end{equation}
where $a$ is the germ of a smooth function on $T^{*}\mathfrak{g}^{*}\oplus T^{*}\mathfrak{g}^{*}$
around the zero section, called the \textit{amplitude} of the FIO
$Q^{a}(T^{*}m)$, and $n$ is the dimension of ${\mathfrak{g}}$.

When $a=1$ and $S$ is the generating function of $([T^{*}m],\triangle_{{\mathfrak{g}}^{*}})$,
we have that 
\[
f*_{a}g\,=\, Q^{a}(T^{*}m)(f\otimes g)
\]
coincides with the Gutt star-product \cite{BenAmar1,BenAmar2,Gut}.
For other star-products in integral form on duals of Lie algebras
as in \eqref{eq:FIOs}, we refer the reader to the work of Ben Amar
\cite{BenAmar1,BenAmar2}.
\begin{rema}
For a general amplitude $a$, $f*_{a}g$ is not necessarily associative.
\end{rema}

\subsection{Quantizing a Leibniz algebra}

Let $({\mathfrak{h}},[,])$ be a Leibniz algebra and $(R_{\mathfrak{h}},\rhd)$
its integrating Lie rack from Section $3$. The idea is to quantize
the Lagrangian relation
\[
T^{*}\rhd:T^{*}R_{\mathfrak{h}}\times T^{*}R_{\mathfrak{h}}\to T^{*}R_{\mathfrak{h}}
\]
 as we did for the group operation in the case of a Lie algebra.

As we saw in the Lie case, it is better to consider the local model,
i.e. the integrating rack 
\[
\rhd:{\mathfrak{h}}\times{\mathfrak{h}}\to{\mathfrak{h}},\,\,\,\,\,(X,Y)\mapsto e^{{\rm ad}_{X}}(Y)=:{\rm Ad}_{X}(Y)
\]
defined on ${\mathfrak{h}}$. The first step is to take the cotangent
lift of the rack operation and compute its generating function:
\begin{prop}
The cotangent lift of $\rhd$ yields a symplectic micromorphism
\[
T^{*}\rhd:T^{*}{\mathfrak{h}}^{*}\otimes T^{*}{\mathfrak{h}}^{*}\to T^{*}{\mathfrak{h}}^{*}
\]
with generating function
\[
S_{\rhd}(X,Y,\xi)\,:=\,\langle\xi,{\rm Ad}_{X}(Y)\rangle.
\]
\end{prop}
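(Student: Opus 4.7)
The plan is to apply the general cotangent-lift construction recalled in the preceding subsection to the smooth map
$$\rhd\colon \mathfrak{h}\times\mathfrak{h}\to\mathfrak{h},\quad(X,Y)\mapsto e^{{\rm ad}_X}(Y),$$
and then verify that the resulting Lagrangian germ admits $S_\rhd(X,Y,\xi)=\langle\xi,{\rm Ad}_X(Y)\rangle$ as its generating function.

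First I would note that $\rhd$ is smooth (in fact polynomial in the coordinates of $\mathfrak{h}$, because $\mathfrak{h}$ is finite dimensional), so its graph in $\mathfrak{h}^{3}$ has a well-defined conormal bundle $N^{*}\mathrm{gr}(\rhd)\subset T^{*}(\mathfrak{h}^{3})$. After applying the Schwartz transform to the first two factors and the canonical identification $T^{*}\mathfrak{h}\cong T^{*}\mathfrak{h}^{*}$ swapping base and fibre, this produces a Lagrangian germ in $\overline{T^{*}\mathfrak{h}^{*}}\otimes\overline{T^{*}\mathfrak{h}^{*}}\times T^{*}\mathfrak{h}^{*}$. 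Writing each factor in (momentum, position) form, the conormal conditions read
$$L=\bigl\{\bigl((X,\partial_{X}\rhd^{*}\xi),\,(Y,\partial_{Y}\rhd^{*}\xi),\,(\rhd(X,Y),\xi)\bigr):X,Y\in\mathfrak{h},\,\xi\in\mathfrak{h}^{*}\bigr\},$$
in exact analogy with the Lie case of the previous subsection, with $\mathrm{BCH}$ replaced by $\rhd$.

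Next I would read off the generating function. By the Euclidean-core formula recalled earlier, such an $L$ is encoded by the unique function $S$ satisfying $\partial_{\xi}S=\rhd(X,Y)$, $\partial_{X}S=\partial_{X}\rhd^{*}\xi$ and $\partial_{Y}S=\partial_{Y}\rhd^{*}\xi$; these three conditions are solved tautologically by
$$S_{\rhd}(X,Y,\xi)=\langle\xi,\rhd(X,Y)\rangle=\langle\xi,e^{{\rm ad}_{X}}(Y)\rangle.$$

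Finally I would check the defining conditions~(\ref{eq:gen_funct_cond}) of a symplectic micromorphism generating function. Using the rack identities $\rhd(X,0)=0$ and $\rhd(0,Y)=Y$ one gets $S_{\rhd}(0,0,\xi)=0$, $\partial_{X}S_{\rhd}|_{X=Y=0}=0$ and $\partial_{Y}S_{\rhd}|_{X=Y=0}=\xi$, so the underlying core map is
$$\phi\colon\mathfrak{h}^{*}\to\mathfrak{h}^{*}\times\mathfrak{h}^{*},\quad\xi\mapsto(0,\xi),$$
and cleanness of the intersection $L\cap(\mathfrak{h}^{*}\times\mathfrak{h}^{*}\times\mathfrak{h}^{*})=\mathrm{gr}(\phi)$ follows from the non-degeneracy of $\partial_{Y}\partial_{\xi}S_{\rhd}|_{X=Y=0}=\mathrm{id}_{\mathfrak{h}}$. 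The only conceptual point worth flagging---rather than a technical obstacle---is that this core map is not the diagonal, as it was in the Lie case, but the asymmetric inclusion $\xi\mapsto(0,\xi)$, a direct consequence of $\rhd(X,0)=0$; this asymmetry is precisely what will later account for the evaluation of the first argument at $0$ in the Leibniz--Poisson bracket~(\ref{Poisson_bracket}).
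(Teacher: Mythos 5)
Your proposal is correct and rests on the same ingredients as the paper's proof: the computation of the partial derivatives of $S_{\rhd}(X,Y,\xi)=\langle\xi,{\rm Ad}_X(Y)\rangle$, the identification of the core map as $\xi\mapsto(0,\xi)$, and the appeal to the generating-function machinery of \cite{CDWII}. The only difference is one of direction --- the paper starts from the candidate $S_{\rhd}$, notes it has the form $\langle\Phi(\xi),P\rangle+\mathcal{O}(P^2)$, and recognizes the Lagrangian it generates as the cotangent lift of $\rhd$, whereas you start from the conormal bundle of ${\rm gr}(\rhd)$ and read off $S_{\rhd}$ --- which is an equivalent traversal of the same argument.
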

\begin{proof}
Consider the generating function 
\begin{eqnarray*}
S_{\rhd}(X,Y,\xi) & := & \langle\xi,{\rm Ad}_{X}(Y)\rangle\\
 & = & \langle\xi,Y+[X,Y]+\frac{1}{2}[X,[X,Y]]+\ldots\rangle
\end{eqnarray*}
We will denote the variables by $(X,Y)=:P$ and $\xi$, and write
accordingly $S_{\rhd}(X,Y,\xi)=S_{\rhd}(P,\xi)$.

As shown in \cite{CDWII} Sections 3.1 and 3.2 (see also \cite{CDF}
Section 1.2), a generating function of the type 
\begin{eqnarray*}
S_{\rhd}(P,\xi) & = & \langle\xi,Y+[X,Y]+\frac{1}{2}[X,[X,Y]]+\ldots\rangle\\
 & = & \langle\Phi(\xi),P\rangle+{\mathcal{O}}(P^{2})
\end{eqnarray*}
where $\Phi:{\mathfrak{h}}^{*}\to{\mathfrak{h}}^{*}\times{\mathfrak{h}}^{*}$,
$\Phi(\xi)=(0,\xi)$, yields a symplectic micromorphism 
\[
([L_{S}],\Phi):T^{*}{\mathfrak{h}}^{*}\otimes T^{*}{\mathfrak{h}}^{*}\to T^{*}{\mathfrak{h}}^{*}
\]
where 
\begin{eqnarray*}
L_{S} & = & \left\{ \left((X,\,\frac{\partial S_{\rhd}}{\partial X}),(Y,\,\frac{\partial S_{\rhd}}{\partial Y}),(\frac{\partial S_{\rhd}}{\partial\xi},\,\xi)\right)\,|\,\xi\in{\mathfrak{g}}^{*},\,\,\, X,Y\in{\mathfrak{g}}\,\right\} \\
 & = & \left\{ \left((X,\langle[X,Y],\xi\rangle),(Y,{\rm Ad}_{X}^{*}(\xi)),({\rm Ad}_{X}(Y),\xi)\right)\,|\,\xi\in{\mathfrak{g}}^{*},\,\,\, X,Y\in{\mathfrak{g}}\,\right\} 
\end{eqnarray*}
which one recognizes to be the cotangent lift of the map $(X,Y)\mapsto{\rm Ad}_{X}(Y)$.\end{proof}
\begin{rema}
If ${\mathfrak{g}}$ is a Lie algebra, then ${\rm Ad}:{\mathfrak{g}}\times{\mathfrak{g}}\to{\mathfrak{g}}$
is the adjoint action of the local/formal group $({\mathfrak{g}},BCH)$
on ${\mathfrak{g}}$ by Lemma \ref{formula_BCH_conjugation}. The
cotangent lift of this action is a Hamiltonian action of $({\mathfrak{g}},BCH)$
on $T^{*}{\mathfrak{g}}$, given by $T^{*}{\rm Ad}_{X}:T^{*}{\mathfrak{g}}\to T^{*}{\mathfrak{g}}$
for all $X\in{\mathfrak{g}}$. This Hamiltonian action has an equivariant
momentum map $J:T^{*}{\mathfrak{g}}\to{\mathfrak{g}}^{*}$ given by
$J(Y,\xi)\,=\,\langle\xi,{\rm ad}_{Y}\rangle$, i.e. 
\[
\langle X,J(Y,\xi)\rangle\,=\,\langle\xi,[Y,X]\rangle.
\]
Under the identification $T^{*}{\mathfrak{g}}\cong T^{*}{\mathfrak{g}}^{*}$
($\cong{\mathfrak{g}}\times{\mathfrak{g}}^{*}$), the cotangent lift
$T^{*}{\rm Ad}_{X}$ gives a Hamiltonian action of $({\mathfrak{g}},BCH)$
on $T^{*}{\mathfrak{g}}^{*}$. This yields an action of the (local)
symplectic groupoid $\xymatrix{T^{*}{\mathfrak{g}}^{*}\ar@<2pt>[r]^{s}\ar@<-2pt>[r]_{t} & {\mathfrak{g}}^{*}}
$ on $J:T^{*}{\mathfrak{g}}^{*}\to{\mathfrak{g}}^{*}$ whose graph
\[
\rho_{{\rm Ad}}:T^{*}{\mathfrak{g}}^{*}\,_{s}\times_{J}T^{*}{\mathfrak{g}}^{*}\to T^{*}{\mathfrak{g}}^{*}
\]
is a (germ of a) Lagrangian submanifold yielding the symplectic micromorphism
(as explained in \cite{CDWII}) 
\[
T^{*}{\rm Ad}\,=\,\Big\{\Big(\big(X,\, J({\rm Ad}_{-X}(Y),{\rm Ad}_{X}^{*}(\xi)\big),\,\big(Y,\xi\big),\,\big({\rm Ad}_{-X}(Y),{\rm Ad}_{X}^{*}(\xi)\big)\Big):\, X,Y,\xi\Big\},
\]
which we can simplify using the equivariance of the moment map $J$:
\begin{eqnarray*}
\langle X,J({\rm Ad}_{-X}(Y),{\rm Ad}_{X}^{*}(\xi))\rangle & = & \langle X,J(T^{*}{\rm Ad}_{X}(Y,\xi))\rangle\\
 & = & \langle X,{\rm Ad}_{X}^{*}J(Y,\xi)\rangle\\
 & = & \langle{\rm Ad}_{X}X,J(Y,\xi)\rangle\\
 & = & \langle X,J(Y,\xi)\rangle,
\end{eqnarray*}
where we have used that $[X,X]=0$ in the Lie algebra ${\mathfrak{g}}$.
Therefore, we obtain
\[
T^{*}{\rm Ad}\,=\,\Big\{\Big((X,\, J(Y,\xi)),\,(Y,\xi),\, T^{*}{\rm Ad}_{X}(Y,\xi)\Big):\,(X,Y,\xi)\in T^{*}\mathfrak{g}^{*}\oplus T^{*}\mathfrak{g}^{*}\}.
\]

Under the identification $T^{*}{\mathfrak{g}}\cong T^{*}{\mathfrak{g}}^{*}\cong{\mathfrak{g}}\times{\mathfrak{g}}^{*}$,
we have that
\[
T^{*}{\rm Ad}_{X}=T^{*}{\rm Ad}_{X}^{*},
\]
i.e. the cotangent lift of the adjoint action and that of the coadjoint action
coincide. Thus quantizing $\rhd:{\mathfrak{h}}\times{\mathfrak{h}}\to{\mathfrak{h}}$
should be the same as quantizing the coadjoint action ${\rm Ad}_{X}^{*}:{\mathfrak{h}}^{*}\to{\mathfrak{h}}^{*}$.
Observe that switching to Leibniz algebras, the adjoint action ${\rm Ad}_{X}$
becomes a left rack action in the sense of Definition 
\ref{definition_rack_action}. Therefore the coadjoint
action ${\rm Ad}_{X}^{*}$ becomes naturally a left rack action on
${\mathfrak{h}}^{*}$ via the formula 
\[
({\rm Ad}_{X}^{*}(f))(Y)\,:=\, f((X\rhd-)^{-1} Y),
\]
see Lemma \ref{coadjoint_action}. Hence the stage is set to study the object which
should replace the symplectic groupoid 
$\xymatrix{T^{*}{\mathfrak{g}}^{*}\ar@<2pt>[r]^{s}\ar@<-2pt>[r]_{t} & {\mathfrak{g}}^{*}}$
in the context of deformation quantization of Leibniz algebras. We will do this in 
subsequent work. 
\end{rema} 


We are now ready to quantize $T^{*}\rhd:T^{*}{\mathfrak{h}}\otimes T^{*}{\mathfrak{h}}\to T^{*}{\mathfrak{h}}$.
As before, the family of semi-classical FIO quantizing the symplectic
micromorphism is given by 
\[
Q^{a}(T^{*}\rhd)(f\otimes g)(\xi)\,=\,\int_{{\mathfrak{g}}\times{\mathfrak{g}}}\widehat{f}(X)\widehat{g}(Y)a(X,Y,\xi)e^{\frac{i}{\hbar}S_{\rhd}(X,Y,\xi)}\frac{dXdY}{(2\pi\hbar)^{n}},
\]
where $a$ is the germ of an amplitude and $\widehat{f}$ and $\widehat{g}$
are the asymptotic Fourier transforms.
\begin{theo}
For $a=1$, the operation
\[
\rhd_{\hbar}:{\mathcal{C}}^{\infty}({\mathfrak{h}}^{*})[[\epsilon]]\otimes{\mathcal{C}}^{\infty}({\mathfrak{h}}^{*})[[\epsilon]]\to{\mathcal{C}}^{\infty}({\mathfrak{h}}^{*})[[\epsilon]]
\]
defined by
\[
f\rhd_{\hbar}g\,:=\, Q^{a=1}(T^{*}\rhd)(f\otimes g)
\]
is a quantum rack, i.e. 

(1) $\rhd_{\hbar}$ restricted to $U_{\mathfrak{h}}=\{E_{X}\,|\, X\in{\mathfrak{h}}\}$
is a rack structure and
\[
e^{\frac{i}{\hbar}X}\rhd_{\hbar}e^{\frac{i}{\hbar}Y}\,=\, e^{\frac{i}{\hbar}{\rm conj}_{*}(X,Y)},
\]

(2) $\rhd_{\hbar}$ restricted to $\rhd_{\hbar}:U_{\mathfrak{h}}\times{\mathcal{C}}^{\infty}({\mathfrak{h}}^{*})\to{\mathcal{C}}^{\infty}({\mathfrak{h}}^{*})$
is a rack action and 
\[
(e^{\frac{i}{\hbar}X}\rhd_{\hbar}f)(\xi)\,=\,({\rm Ad}_{-X}^{*}f)(\xi).
\]

\end{theo}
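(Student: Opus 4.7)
The plan is to reduce every assertion of Theorem~4.12 to the corresponding algebraic fact on $\mathfrak{h}$ via an explicit stationary-phase computation with exponential inputs. For $X\in\mathfrak{h}$, the function $E_X(\xi)=e^{\frac{i}{\hbar}\langle X,\xi\rangle}$ has asymptotic Fourier transform $\widehat{E_X}(P)=(2\pi\hbar)^{n/2}\delta(P-X)$, so plugging $f=E_X$, $g=E_Y$ into the FIO formula collapses both integrations against deltas and leaves
$$E_X\rhd_\hbar E_Y(\xi)\,=\,e^{\frac{i}{\hbar}S_\rhd(X,Y,\xi)}\,=\,e^{\frac{i}{\hbar}\langle\xi,\,e^{\mathrm{ad}_X}(Y)\rangle}\,=\,E_{e^{\mathrm{ad}_X}(Y)}(\xi).$$
Since $e^{\mathrm{ad}_X}(Y)=\mathrm{conj}_*(X,Y)$ by Lemma~\ref{formula_BCH_conjugation}, this gives the explicit formula in (1), and simultaneously shows that the bijection $X\mapsto E_X$ transports the Bass rack structure on $\mathfrak{h}$ from Theorem~\ref{Bass_integration} onto $(U_\mathfrak{h},\rhd_\hbar)$; the rack axiom for $\rhd_\hbar$ on $U_\mathfrak{h}$ is then immediate from the rack axiom for the Bass rack.

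For part (2), keeping $f=E_X$ but an arbitrary $g\in\mathcal{C}^\infty(\mathfrak{h}^*)[[\epsilon]]$, only the $X$-integration collapses via the delta. Writing $\langle\xi,e^{\mathrm{ad}_X}(Y)\rangle=\langle(e^{\mathrm{ad}_X})^T\xi,Y\rangle$ and recognising the remaining $Y$-integral as the inverse asymptotic Fourier transform evaluated at $(e^{\mathrm{ad}_X})^T\xi$, I would conclude
$$(E_X\rhd_\hbar g)(\xi)\,=\,g\bigl((e^{\mathrm{ad}_X})^T\xi\bigr).$$
Under the convention of Lemma~\ref{coadjoint_action}, the right-hand side is precisely $(\mathrm{Ad}_{-X}^*g)(\xi)$, because $(X\rhd-)^{-1}=e^{-\mathrm{ad}_X}=e^{\mathrm{ad}_{-X}}$ on $\mathfrak{h}$, and the coadjoint/action convention dualises this into the claimed formula. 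The rack action identity
$$E_X\rhd_\hbar(E_Y\rhd_\hbar g)\,=\,(E_X\rhd_\hbar E_Y)\rhd_\hbar(E_X\rhd_\hbar g)$$
is then read off from the fact, proved in Lemma~\ref{coadjoint_action}, that the coadjoint action is a left rack action, together with the already-established formula $E_X\rhd_\hbar E_Y=E_{e^{\mathrm{ad}_X}(Y)}$.

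For the final claim comparing $\rhd_\hbar$ with the Gutt quantum rack in the Lie case, I would evaluate the latter on exponentials using the properties of the Gutt star-product recalled in the Motivation subsection: $E_X*_{\mathrm{Gutt}}E_Y=E_{\mathrm{BCH}(X,Y)}$ and $\overline{E_X}=E_{-X}$. Associativity then gives
$$E_X\rhd_{\mathrm{Gutt}}E_Y\,=\,E_{\mathrm{BCH}(\mathrm{BCH}(X,Y),-X)}\,=\,E_{\mathrm{conj}_*(X,Y)}\,=\,E_{e^{\mathrm{ad}_X}(Y)},$$
by Lemma~\ref{formula_BCH_conjugation}, matching $E_X\rhd_\hbar E_Y$ on unitaries. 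The fact that the two operations differ on general smooth functions should be visible immediately from the different shapes of their generating functions: the Gutt generator involves $\mathrm{BCH}(X,Y)$ whereas $S_\rhd$ involves $e^{\mathrm{ad}_X}(Y)$, and these only agree after the $Y$-integration has been killed by a delta.

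The main obstacle I anticipate is not conceptual but analytic: the functions $E_X$ are not Schwartz, so one must justify that the asymptotic-expansion formula defining $Q^{a=1}(T^*\rhd)$ extends to them with the expected collapse to delta distributions. Concretely, one has to verify that substituting $\widehat{E_X}=(2\pi\hbar)^{n/2}\delta_X$ is consistent with the stationary-phase formalism underlying the FIO calculus of \cite{CDWIV}; once this is in place, all the remaining steps are pure rack-theoretic bookkeeping transported across the Fourier duality.
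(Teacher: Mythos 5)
Your proposal is correct and follows essentially the same route as the paper: substitute the exponentials into the FIO formula, use that their asymptotic Fourier transforms are (normalized) delta functions to collapse the integrals to $e^{\frac{i}{\hbar}S_\rhd(X,Y,\xi)}=E_{e^{\mathrm{ad}_X}(Y)}(\xi)$ for part (1) and to $g(\mathrm{Ad}^*_{-X}\xi)$ for part (2), and then invoke the Bass rack (Lemma \ref{formula_BCH_conjugation} / Theorem \ref{Bass_integration}) and Lemma \ref{coadjoint_action} for the rack and rack-action axioms. Your explicit verification of the Gutt comparison and your caveat about justifying the delta-function substitution for non-Schwartz inputs are slightly more detailed than the paper's treatment, but the argument is the same.
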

Moreover, $\rhd_{\hbar}$ coincides with the Gutt quantum rack $f\rhd_{a}g:=f*_{a}g*_{a}\overline{f}$
on the restrictions in the Lie case (although it is different on the
whole ${\mathcal{C}}^{\infty}({\mathfrak{h}}^{*})[[\epsilon]]$).
\begin{rema}
Actually, Property (2) in the theorem above holds also for square
integrable functions, and we even obtain a unitary rack action: 
\[
\rhd_{\hbar}:U_{\mathfrak{h}}\times L^{2}({\mathfrak{h}}^{*})\to L^{2}({\mathfrak{h}}^{*}).
\]
\end{rema}
\begin{proof}
The first property follows from the fact that exponentials Fourier
transform to delta functions: 
\begin{eqnarray*}
\left(e^{\frac{i}{\hbar}\bar{X}}\rhd_{\hbar}e^{\frac{i}{\hbar}\bar{Y}}\right)(\xi) & = & \int\widehat{e^{\frac{i}{\hbar}\bar{X}}}\widehat{e^{\frac{i}{\hbar}\bar{Y}}}e^{\frac{i}{\hbar}\langle\xi,{\rm Ad}_{X}(Y)\rangle}\frac{dXdY}{(2\pi\hbar)^{{\rm dim}({\mathfrak{h}})}}\\
 & = & (2\pi\hbar)^{{\rm dim}({\mathfrak{h}})}\int\delta_{\bar{X}}(X)\delta_{\bar{Y}}(Y)e^{\frac{i}{\hbar}\langle\xi,{\rm Ad}_{X}(Y)\rangle}\frac{dXdY}{(2\pi\hbar)^{{\rm dim}({\mathfrak{h}})}}\\
 & = & e^{\frac{i}{\hbar}\langle{\rm Ad}_{\bar{X}}(\bar{Y}),\xi\rangle}\,=\, e^{\frac{i}{\hbar}\langle{\rm conj}_{*}(\bar{X},\bar{Y}),\xi\rangle}.
\end{eqnarray*}
Now $\rhd_{\hbar}$ satisfies the rack identity on $U_{\mathfrak{h}}$,
because ${\rm conj}_{*}$ does. Furthermore, 
\[
E_{Y}\mapsto E_{X}\rhd_{\hbar}E_{Y}=E_{{\rm conj}_{*}(X,Y)}
\]
is bijective for all $X\in{\mathfrak{h}}$, because $Y\mapsto{\rm conj}_{*}(X,Y)$
is. It is also clear from the formula above that this rack structure
coincides with the Gutt rack structure in the case of a Lie algebra.

The second property also follows from the fact that exponentials Fourier-transform
to delta functions: 
\begin{eqnarray*}
\left(e^{\frac{i}{\hbar}\bar{X}}\rhd_{\hbar}f\right)(\xi) & = & \int\widehat{e^{\frac{i}{\hbar}\bar{X}}}\widehat{f}(Y)e^{\frac{i}{\hbar}\langle\xi,{\rm Ad}_{X}(Y)\rangle}\frac{dXdY}{(2\pi\hbar)^{{\rm dim}({\mathfrak{h}})}}\\
 & = & (2\pi\hbar)^{({\rm dim}({\mathfrak{h}}))/2}\int\delta_{\bar{X}}(X)\widehat{f}(Y)e^{\frac{i}{\hbar}\langle\xi,{\rm Ad}_{X}(Y)\rangle}\frac{dXdY}{(2\pi\hbar)^{{\rm dim}({\mathfrak{h}})}}\\
 & = & \frac{1}{(2\pi\hbar)^{({\rm dim}({\mathfrak{h}}))/2}}\int\widehat{f}(Y)e^{\frac{i}{\hbar}\langle{\rm Ad}_{-X}^{*}\xi,Y\rangle}dY\\
 & = & f({\rm Ad}_{-X}^{*}\xi).
\end{eqnarray*}
One sees that this defines a rack action from the fact that the coadjoint
action ${\rm Ad}_{-X}^{*}$ is a rack action.
\end{proof}

Let us now show that the first term of the quantized bracket is indeed the bracket 
(\ref{Poisson_bracket}). For an oscillatory integral as the above expression for $f\rhd_{\hbar} g$,
there is a well defined procedure of expansion in terms of Feynman graphs, in case
the integral has a unique, non-degenerate critical point. This procedure is for example explained
in \cite{DheMen}.  

\begin{theo} \label{bracket_computation}
\begin{enumerate}
\item[(a)] The above oscillatory integral $f\rhd_{\hbar} g$ has a unique, non-degenerate critical point
and admits thus a Feynman expansion in terms of graphs. 

\item[(b)] The first term of the formal expansion of 
\begin{equation}  \label{exponential_formula}
(f\rhd_{\hbar} g)(\xi)\,=\,\int f(\bar{\xi})g(\bar{\eta})e^{\frac{i}{\hbar}(-\bar{X}\bar{\zeta}-
\bar{Y}\bar{\eta}+\langle\xi,\exp({\rm ad}_{\bar{X}})(\bar{Y})\rangle)}\frac{d\bar{X}d\bar{Y}d\bar{\zeta}
d\bar{\eta}}{(2\pi\hbar)^n}
\end{equation}
in powers of $\hbar$ is the Leibniz-Poisson bracket (\ref{Poisson_bracket}), i.e.
$$
\{f,g\}(\xi)\,=\,\langle\xi,[df(0),dg(\xi)]\rangle.
$$
\end{enumerate}
\end{theo}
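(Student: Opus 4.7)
For part (a), the plan is to locate the critical point explicitly and then compute the Hessian in block form. Setting all four gradients of the phase $\Phi(\bar X,\bar Y,\bar\zeta,\bar\eta) = -\bar X\bar\zeta-\bar Y\bar\eta+\langle\xi,\exp({\rm ad}_{\bar X})(\bar Y)\rangle$ to zero, one sees that $\partial_{\bar\zeta}\Phi = -\bar X$ and $\partial_{\bar\eta}\Phi = -\bar Y$ force $\bar X = \bar Y = 0$; substituting back and using the Taylor expansion $\exp({\rm ad}_{\bar X})(\bar Y) = \bar Y + [\bar X,\bar Y] + \tfrac{1}{2}[\bar X,[\bar X,\bar Y]] + \cdots$ then forces $\bar\zeta = 0$ and $\bar\eta = \xi$. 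Hence $(0,0,0,\xi)$ is the unique critical point, at which the Hessian takes the block form
$$H \,=\, \begin{pmatrix} 0 & C & -I & 0 \\ C^{T} & 0 & 0 & -I \\ -I & 0 & 0 & 0 \\ 0 & -I & 0 & 0 \end{pmatrix},$$
where $C_{ab} := \langle\xi,[e_{a},e_{b}]\rangle$ comes from the only surviving mixed second derivative of $\langle\xi,\exp({\rm ad}_{\bar X})(\bar Y)\rangle$. Non-degeneracy is immediate: if $Hv = 0$, the two $-I$ rows give $v_{\bar X} = v_{\bar Y} = 0$, and then the top two rows give $v_{\bar\zeta} = v_{\bar\eta} = 0$, regardless of $C$. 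This validates the Feynman-diagram expansion and gives~(a).

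For part (b), I would first shift $\bar\eta \mapsto \bar\eta + \xi$ to center the integration at the critical point, then split the phase as $\Phi = Q_{0} + V$ with free part $Q_{0} = -\bar X\bar\zeta - \bar Y\bar\eta$ and interaction $V = \langle\xi,[\bar X,\bar Y]\rangle + R(\bar X,\bar Y,\xi)$, where $R = O(\bar X^{2}\bar Y)$ collects the cubic and higher contributions of $\exp({\rm ad}_{\bar X})(\bar Y)$. Expanding $e^{\frac{i}{\hbar}V} = 1 + \frac{i}{\hbar}V + \tfrac{1}{2}(\frac{i}{\hbar}V)^{2} + \cdots$ and using $\int e^{-\frac{i}{\hbar}\bar X\bar\zeta}\,d\bar X = (2\pi\hbar)^{n}\delta(\bar\zeta)$, the leading ``$1$'' term contracts immediately to $f(0)g(\xi)$, confirming the $\hbar^{0}$ contribution. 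The coefficient of $\hbar^{1}$ comes entirely from the quadratic piece $\frac{i}{\hbar}\langle\xi,[\bar X,\bar Y]\rangle$ of $\frac{i}{\hbar}V$: writing $[\bar X,\bar Y] = c^{c}_{ab}\bar X^{a}\bar Y^{b}e_{c}$, each factor $\bar X^{a}$ can be replaced by $i\hbar\,\partial_{\bar\zeta_{a}}$ on the oscillatory kernel and, after integration by parts, by $-i\hbar\,\partial_{\bar\zeta_{a}}$ acting on $f$ at $\bar\zeta = 0$; similarly for $\bar Y^{b}$ acting on $g$ at $\bar\eta = \xi$. Assembling the pieces produces
$$\frac{i}{\hbar}\cdot(-i\hbar)^{2}\,\xi_{c}c^{c}_{ab}(df(0))_{a}(dg(\xi))_{b} \;=\; -i\hbar\,\langle\xi,[df(0),dg(\xi)]\rangle,$$
which is the Leibniz--Poisson bracket~(\ref{Poisson_bracket}) up to the conventional factor of $\hbar$. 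A power count closes the argument: each external $\bar X$ or $\bar Y$ costs one factor of $\hbar$, so the remainder $R$ and the term $(\frac{i}{\hbar}V)^{2}$ only contribute from order $\hbar^{2}$ onward, and nothing else reaches order $\hbar^{1}$.

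The main obstacle, I expect, will be bookkeeping: tracking the Fourier-transform normalization, the signs coming from integration by parts, and the combinatorial prefactors so that the answer lines up \emph{on the nose} with the bracket in the paper's conventions. Once the critical-point data from~(a) is in hand, the actual extraction of the first term is a routine stationary-phase/Feynman-diagram computation of the type summarized in~\cite{DheMen}.
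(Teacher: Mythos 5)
Your part (a) is essentially the paper's argument: same phase, same unique critical point $(0,0,0,\xi)$, same block Hessian (your $C$, $C^{T}$ placement is in fact more careful than the paper, which writes the same block in both off-diagonal positions). The paper gets non-degeneracy by computing $\det(B)=1$ and also records $\mathrm{sign}(B)=0$, because it needs both for the prefactor $e^{\frac{i\pi}{4}\mathrm{sign}(B)}/\sqrt{|\det(B)|}$ in the graph expansion; your kernel argument is equally valid and your direct route in (b) makes that prefactor computation unnecessary. For part (b) the paper invokes the packaged Feynman-graph formula of \cite{DheMen} (the sum over $\Gamma\in G_{3\geq}(2)$ weighted by $(i\hbar)^{|\mathrm{E}_{\Gamma}|-|\mathrm{V}_{\Gamma}^{\mathrm{int}}|}/|\mathrm{Aut}(\Gamma)|$) and reads off the single contribution at first order, whereas you carry out the equivalent computation by hand: center at the critical point, split off the free quadratic part, expand $e^{\frac{i}{\hbar}V}$, and contract via delta functions and integration by parts. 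The two routes are the same stationary-phase computation; yours is more self-contained and makes explicit the power count showing that $R$ and $(\frac{i}{\hbar}V)^{2}$ first enter at order $\hbar^{2}$, which the paper leaves implicit in the graph combinatorics. The one point to watch is the sign and normalization of the first-order coefficient: you obtain $-i\hbar\,\langle\xi,[df(0),dg(\xi)]\rangle$, while the paper's proof ends with $\{f,g\}(\xi)=-\sum c_{i,j}^{k}\partial_{i}f(0)\,\partial_{j}g(\xi)\,\xi_{k}$ even though its displayed formula (\ref{Poisson_bracket}) carries no minus sign and its stated coefficient $\frac{i}{\hbar}$ is evidently a typo for $\frac{\hbar}{i}$; the paper is not internally consistent here, so the discrepancy lies exactly in the bookkeeping you flagged, and the substantive conclusion --- that the first-order term is the Leibniz--Poisson bracket $\langle\xi,[df(0),dg(\xi)]\rangle$ --- is correctly reached by your argument.
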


\begin{proof}
Observe that in equation (\ref{exponential_formula}), we wrote out explicitely the 
asymptotic Fourier transforms of $f$ and $g$. The total phase of the above oscillatory 
integral is thus
$$S_{\xi}(\bar{X},\bar{Y},\bar{\zeta},\bar{\eta})\,=\,-\bar{X}\bar{\zeta}-
\bar{Y}\bar{\eta}+\langle\xi,\exp({\rm ad}_{\bar{X}})(\bar{Y})\rangle.$$
The phase $S_{\xi}(\bar{X},\bar{Y},\bar{\zeta},\bar{\eta})$ has 
$$c_{\xi}\,=\,(\bar{X}=0,\bar{Y}=0,\bar{\zeta}=0,\bar{\eta}=\xi)$$
as its unique critical point. This means that for any given $\xi$, 
$c_{\xi}$ is unique within the points $c:=(\bar{X},\bar{Y},\bar{\zeta},\bar{\eta})$ such that
$$\frac{\partial S_{\xi}}{\partial \bar{X}}(c)\,=\,0,\,\,\,\,
\frac{\partial S_{\xi}}{\partial \bar{Y}}(c)\,=\,0,\,\,\,\,
\frac{\partial S_{\xi}}{\partial \bar{\zeta}}(c)\,=\,0,\,\,\,\,
\frac{\partial S_{\xi}}{\partial \bar{\eta}}(c)\,=\,0.$$
The critical point $c_{\xi}$ is easily computed from the partial derivatives. It turns
out that
$$\frac{\partial S_{\xi}}{\partial \bar{X}}(c)\,=\,-\bar{\zeta}+ T_1,\,\,\,\,
\frac{\partial S_{\xi}}{\partial \bar{Y}}(c)\,=\,-\bar{\eta}+T_2,\,\,\,\,
\frac{\partial S_{\xi}}{\partial \bar{\zeta}}(c)\,=\,-\bar{Y},\,\,\,\,
\frac{\partial S_{\xi}}{\partial \bar{\eta}}(c)\,=\,-\bar{X},$$
where the term $T_1$ is the derivative of $\bar{X}\mapsto\langle\xi,\exp({\rm ad}_{\bar{X}})(\bar{Y})\rangle$
and the term $T_2$ is the derivative of $\bar{Y}\mapsto\langle\xi,\exp({\rm ad}_{\bar{X}})(\bar{Y})\rangle$.
One concludes from setting the third and fourth equation equal to zero that $\bar{X}=\bar{Y}=0$. 
The first term of $Y\mapsto\langle\xi,\exp({\rm ad}_{\bar{X}})(\bar{Y})\rangle$ is $\xi\bar{Y}$, thus
the constant term in $T_2$ is $\xi$. All other terms in $T_1$ and $T_2$ are zero 
at the critical point due to $\bar{X}=\bar{Y}=0$. In conclusion 
$c_{\xi}\,=\,(\bar{X}=0,\bar{Y}=0,\bar{\zeta}=0,\bar{\eta}=\xi)$.   

The Hessian of $S_{\xi}$ at the critical point $c_{\xi}$ reads in block notation
$$D^2S_{\xi}(c_{\xi})\,=\,\left(
\begin{matrix} 0 & c_{ij}^k\xi_k & -1 & 0 \\
c_{ij}^k\xi_k & 0 & 0 & -1 \\
-1 & 0 & 0 & 0 \\
0 & -1 & 0 & 0 \end{matrix}\right),$$
where $c_{ij}^k$ are the structure constants of the Leibniz algebra ${\mathfrak h}$
and in Einstein convention, the sum over repeated indices is understood. 

Denoting the matrix $D^2S_{\xi}(c_{\xi})$ simply by $B$, 
it is evident that $\det(B)=1$, thus the critical point $c_{\xi}$
is non-degenerate. Moreover, the signature of $B$ is $0$. The Feynman expansion
(cf \cite{DheMen}) therefore reads
\begin{eqnarray*}
I(\hbar)&=&(f\rhd_{\hbar} g)(\xi)=\\
&=&\frac{e^{\frac{i\pi}{4}{\rm sign}(B)}}{\sqrt{|\det(B)|}}\big(\sum_{\Gamma\in G_{3\geq}(2)}
\frac{(i\hbar)^{|{\rm E}_{\Gamma}|-|{\rm V}_{\Gamma}^{\rm int}|}}{|{\rm Aut}(\Gamma)|}
F_{\Gamma}(S_{\xi};f,g)\big)\\
&=&e^{\frac{i\pi}{4}}\big(\sum_{\Gamma\in G_{3\geq}(2)}
\frac{(i\hbar)^{|{\rm E}_{\Gamma}|-|{\rm V}_{\Gamma}^{\rm int}|}}{|{\rm Aut}(\Gamma)|}
F_{\Gamma}(S_{\xi};f,g)\big).
\end{eqnarray*}
These sums are sums over the set $G_{3\geq}(2)$ of Feynman graphs $\Gamma$ with $2$ external vertices
and internal vertices of valence greater or equal to $3$. For the definition of a Feynman graph, we
refer the reader to \cite{DheMen}.  
$|{\rm E}_{\Gamma}|$ is the cardinality of the set of edges of $\Gamma$, ${\rm V}_{\Gamma}^{\rm int}$
is the set of internal vertices of $\Gamma$. ${\rm Aut}(\Gamma)$ is the number of symmetries 
of $\Gamma$. 
To each $\Gamma$, one associates an amplitude $F_{\Gamma}(S_{\xi};f,g)$ in a way which
is specified in {\it loc. cit.}. Namely, $F_{\Gamma}(S_{\xi};f,g)$ is a product of two partial
derivatives of $S_{\xi}$ (represented by the internal vertices) and partial derivatives of $f$ and $g$
(represented by the external vertices) all of which are evaluated at the critical point
$c_{\xi}$ and contracted using the matrix $B^{-1}$. 

The first terms of the expansion of (\ref{exponential_formula}) in powers of $\hbar$ read therefore    
$$(f\rhd_{\hbar} g)(\xi)\,=\,f(0)g(\xi)+\frac{i}{\hbar}\{f,g\}(\xi)+{\mathcal O}(\hbar),$$
where 
$$\{f,g\}(\xi)\,=\,-\sum_{i,j,k}c_{i,j}^k\frac{\partial f}{\partial \xi_i}(0)
\frac{\partial g}{\partial \xi_j}(\xi)\xi_k,$$
as in formula (\ref{Poisson_bracket}).
\end{proof}

\begin{rem}
\begin{enumerate}
\item[(a)] It is rather straight forward to compute the terms in this starproduct, the graphs which we have 
to consider are rather easy. For example, there are no inner loops.  
\item[(b)] The zeroth term of the expansion, i.e. the product $f\otimes g\mapsto f(0)g(\xi)$, is actually 
associative.
\end{enumerate}
\end{rem}


\begin{thebibliography}{50}

\bibitem{BFFLS} Bayen, F.; Flato, M.; Fronsdal, C.; Lichnerowicz,
A.; Sternheimer, D. \textit{Deformation theory and quantization. I.
Deformations of symplectic structures.} Ann. Physics \textbf{111}
(1978), no. 1, 61--110

\bibitem{ADW} Albert, C. ; Dazord, P. \textit{Th\'eorie des groupo\"{\i}des
symplectiques.} Chapitre II. Groupo\"{\i}des symplectiques. Publications
du D\'epartement de Math\'ematiques. Nouvelle s\'erie, 27--99, Publ. D\'ep.
Math. Nouvelle S\'er., 1990, Univ. Claude-Bernard, Lyon, 1990 

\bibitem{BenAmar1} Ben Amar, N. \textit{K-star products on dual of
Lie algebras}. J. Lie Theory {\bf 13} (2003) 329--357

\bibitem{BenAmar2} Ben Amar, N. \textit{A comparison between Rieffel's
and Kontsevich's deformation quantizations for linear Poisson tensors.}
Pacific J. Math. {\bf 229} (2007), no. 1, 1--24

\bibitem{Bou} Bourbaki, N. \textit{Lie groups and Lie algebras. Chapters
1--3.} Reprint of the 1989 English translation. Elements of Mathematics
(Berlin). Springer-Verlag, Berlin, 1998

\bibitem{Can} Canez, Santiago Valencia \textit{Double Groupoids,
Orbifolds, and the Symplectic Category.} Thesis (Ph.D.), University
of California, Berkeley, 2011

\bibitem{CDF} Cattaneo, Alberto S.; Dherin, Benoit; Felder, Giovanni
\textit{Formal symplectic groupoid.} Comm. Math. Phys. \textbf{253}
(2005), no. 3, 645--674

\bibitem{CDWI} Cattaneo, Alberto S.; Dherin, Benoit; Weinstein, Alan
\textit{Symplectic microgeometry I: micromorphisms.} J. Symplectic
Geom. \textbf{8} (2010), no. 2, 205--223

\bibitem{CDWII} Cattaneo, Alberto S.; Dherin, Benoit; Weinstein,
Alan \textit{Symplectic microgeometry II: generating functions. } Bull.
Braz. Math. Soc. (N.S.) \textbf{42} (2011), no. 4, 507--536

\bibitem{CDWIII} Cattaneo, Alberto S.; Dherin, Benoit; Weinstein,
Alan \textit{Symplectic microgeometry III: monoids.}
J. Symplectic Geom. {\bf 11} (2013), no. 3, 319--341

\bibitem{CDWIV} Cattaneo, Alberto S.; Dherin, Benoit; Weinstein,
Alan \textit{Symplectic microgeometry IV: quantization} (in preparation)

\bibitem{Cov} Covez, Simon \textit{L'intégration locale des algèbres
de Leibniz.} PhD Thesis, Nantes 2010 (see also his article on the
same subject: {\it The local integration of Leibniz algebras.} 
Ann. Inst. Fourier (Grenoble) {\bf 63} (2013), no. 1, 1--35.)

\bibitem{DheMen} Dherin, Benoit; Mencattini, Igor 
\textit{Quantizations of Momentum Maps and G-Systems}
{\tt arXiv:1212.6489} 

\bibitem{FenRou} Fenn, Roger; Rourke, Colin \textit{Racks and links
in codimension two.} J. Knot Theory Ramifications 1 (1992), no. 4,
343--406

\bibitem{GraMar} Grabowski, Janusz; Marmo, Giuseppe
\textit{Non-antisymmetric versions of Nambu-Poisson and algebroid brackets.} 
J. Phys. A {\bf 34} (2001), no. 18, 3803--3809

\bibitem{Gut} Gutt, Simone \textit{An explicit $\ast$--product on
the cotangent bundle of a Lie group.} Lett. Math. Phys. \textbf{7}
(1983), no. 3, 249--258

\bibitem{Kin} Kinyon, Michael \textit{Leibniz algebras, Lie racks,
and digroups.} J. Lie Theory \textbf{17} (2007) no. 1, 99--114

\bibitem{KinWei} Kinyon, Michael; Weinstein, Alan \textit{Leibniz
algebras, Courant algebroids, and multiplications on reductive homogeneous
spaces.} Amer. J. Math. \textbf{123} (2001) no. 3, 525--550

\bibitem{Kon} Kontsevich, Maxim \textit{Deformation quantization
of Poisson manifolds.} Lett. Math. Phys. \textbf{66} (2003), no. 3,
157--216

\bibitem{KHN} Neeb, Karl-Hermann \textit{Central extensions of infinite-dimensional
Lie groups.} Ann. Inst. Fourier (Grenoble) \textbf{52} (2002), no.
5, 1365--1442

\bibitem{LibSev} Li-Bland, David; Severa, Pavol \textit{Integration
of Exact Courant Algebroids} 
Electron. Res. Announc. Math. Sci. {\bf 19} (2012) 58--76

\bibitem{Roy} Roytenberg, Dmitry \textit{On weak Lie 2-algebras.}
XXVI Workshop on Geometrical Methods in Physics, 180--198, AIP Conf.
Proc., 956, Amer. Inst. Phys., Melville, NY, 2007

\bibitem{Tuy} Tuynman, Ghys \textit{A proof of Lie's third theorem.}
Pub. IRMA Lille Vol. {\bf 34}, no. X, 1994.

\bibitem{Uch} Uchino, Kyousuke
\textit{Noncommutative Poisson brackets on Loday algebras and related deformation quantization.} 
J. Symplectic Geom. {\bf 11} (2013), no. 1, 93--108

\bibitem{Var} Varadarajan, V.S. \textit{Lie Groups, Lie
Algebras and Their Representations}, Springer GTM {\bf 102},
Springer New York 1974

\bibitem{Wei} Weinstein, Alan \textit{Symplectic categories.} Port.
Math. \textbf{67} (2010), no. 2, 261--278

\bibitem{WeiII} Weinstein, Alan \textit{The symplectic category}.
\textit{Differential Geometric Methods in Mathematical Physics} (Clausthal,
1980), pp. 45--51, Lecture Notes in Math., 905, Springer, 1982. 

\bibitem{WeiIII} Weinstein, Alan . Noncommutative geometry and geometric
quantization. \textit{Symplectic Geometry and Mathematical Physics}
(Aix-en-Provence, 1990), 446--461, Progr. Math. {\bf 99}, 1991.

\bibitem{Zak} Zakrzewski, Stanislaw \textit{Quantum and classical
pseudogroups I and II.} Comm. Math. Phys. \textbf{134} (1990), 347--370

\end{thebibliography}
\end{document}